\DeclareFontFamily{OT1}{pzc}{}
\DeclareFontShape{OT1}{pzc}{m}{it}{<-> s * [1.10] pzcmi7t}{}
\DeclareMathAlphabet{\mathpzc}{OT1}{pzc}{m}{it}
\crefname{defin}{Definition}{Definitions}
\crefname{eg}{Example}{Examples}
\crefname{lem}{Lemma}{Lemmas}
\crefname{theo}{Theorem}{Theorems}
\crefname{equation}{}{}
\crefname{enumi}{}{}
\newcommand\N{\mathbb{N}}
\newcommand\Z{\mathbb{Z}}
\newcommand\kk{\Bbbk}
\newcommand\one{\mathbbm{1}}
\newcommand\ba{\mathbf{a}}
\newcommand\bb{\mathbf{b}}
\newcommand\bt{\mathbf{t}}
\newcommand\bg{\mathbf{g}}
\newcommand\bh{\mathbf{h}}
\newcommand\cA{\mathcal{A}}
\newcommand\cC{\mathcal{C}}
\newcommand\cN{\mathcal{N}}
\newcommand\cS{\mathcal{S}}
\newcommand\cT{\mathcal{T}}
\newcommand\fS{\mathfrak{S}}            
\newcommand\op{\mathrm{op}}
\newcommand\rev{\mathrm{rev}}
\newcommand{\md}{\textup{-mod}}
\newcommand{\bmd}{\textup{-bimod}}
\newcommand\FBA{\mathpzc{FinBoolAlg}}    
\newcommand\FBAlf{\mathpzc{FinBoolAlg}(G)_\textup{lf}}  
\newcommand\FinSet{\mathpzc{FinSet}}    
\newcommand\FinSetf{\mathpzc{FinSet}(G)_\textup{free}} 
\newcommand\Heis{\mathpzc{Heis}}        
\newcommand\Par{\mathpzc{Par}}          
\newcommand\Rep{{\mathrm{\underline{Re}p}}}
\newcommand\go{\mathsf{V}}                       
\newcommand\PS{X}                       
\newcommand\BV[1][k]{\mathbf{B}_{#1}}            
\DeclareMathOperator{\End}{End}
\DeclareMathOperator{\Hom}{Hom}
\DeclareMathOperator{\Mon}{Mon}
\DeclareMathOperator{\Partition}{Par}
\DeclareMathOperator{\Power}{Pow}
\DeclareMathOperator{\Kar}{Kar}
\DeclareMathOperator{\stack}{stack}
\newcommand{\dotlabel}[1]{$\scriptstyle{#1}$}
\newcommand{\pd}[2][black]{\filldraw[#1] (#2) circle (1.5pt)} 
\newcommand{\pdg}[3]{
    \filldraw[black] (#1) circle (1.5pt) node[anchor=#2] {\dotlabel{#3}}
}
\newcommand{\braidto}{to[out=up,in=down]}
\newcommand\opendot[1]{\filldraw[fill=white,draw=black] (#1) circle (2pt)}
\newcommand\token[3]{
  \filldraw[blue] (#2) circle (1.5pt) node[anchor=#1] {\dotlabel{#3}}
}
\newcommand\teleport[2]{
  \draw[blue] (#1) to (#2);
  \filldraw[blue] (#1) circle (1.5pt);
  \filldraw[blue] (#2) circle (1.5pt)
}
\newcommand\Peis[1][]{{\Heis_{\uparrow \downarrow}^{#1}}}
\newcommand\merge{
    \begin{tikzpicture}[anchorbase]
      \draw (-0.25,-0.25) to (0,0);
      \draw (0.25,-0.25) to (0,0);
      \draw (0,0) to (0,0.25);
    \end{tikzpicture}
}
\newcommand\spliter{
    \begin{tikzpicture}[anchorbase]
      \draw (-0.25,0.25) to (0,0);
      \draw (0.25,0.25) to (0,0);
      \draw (0,0) to (0,-0.25);
    \end{tikzpicture}
}
\newcommand\crossing{
    \begin{tikzpicture}[anchorbase]
      \draw (-0.25,-0.25) to (0.25,0.25);
      \draw (0.25,-0.25) to (-0.25,0.25);
    \end{tikzpicture}
}
\newcommand\bottompin{
    \begin{tikzpicture}[anchorbase]
      \draw (0,0) to (0,0.25);
      \opendot{0,0};
    \end{tikzpicture}
}
\newcommand\toppin{
    \begin{tikzpicture}[anchorbase]
      \draw (0,0) to (0,-0.25);
      \opendot{0,0};
    \end{tikzpicture}
}
\newcommand\idstrand{
    \begin{tikzpicture}[anchorbase]
      \draw (0,-0.25) to (0,0.25);
    \end{tikzpicture}
}
\newcommand\tokstrand[1][g]{
    \begin{tikzpicture}[anchorbase]
      \draw (0,-0.25) to (0,0.25);
      \token{east}{0,0}{#1};
    \end{tikzpicture}
}
\newcommand\lolly{
    \begin{tikzpicture}[anchorbase]
        \draw (0,-0.2) -- (0,0.2);
        \opendot{0,-0.2};
        \opendot{0,0.2};
    \end{tikzpicture}
}
\tikzset{anchorbase/.style={>=To,baseline={([yshift=-0.5ex]current bounding box.center)}}}
\tikzset{
    centerzero/.style={>=To,baseline={([yshift=-0.5ex](#1))}},
    centerzero/.default={0,0}
}
\newtheorem{theo}{Theorem}[section]
\newtheorem{prop}[theo]{Proposition}
\newtheorem{lem}[theo]{Lemma}
\newtheorem{cor}[theo]{Corollary}
\theoremstyle{definition}
\newtheorem{conv}[theo]{Convention}
\newtheorem{defin}[theo]{Definition}
\newtheorem{rem}[theo]{Remark}
\newtheorem{eg}[theo]{Example}
\numberwithin{equation}{section}
  \newcommand{\acomments}[1]{
    \ \\
    {\color{red}
      \textbf{AS:} #1
    }
    \ \\
    }
  \newcommand{\scomments}[1]{
    \ \\
    {\color{red}
      \textbf{SNL:} #1
    }
    \ \\
    }
  \newcommand{\acomments}[1]{}
  \newcommand{\scomments}[1]{}
  \newcommand{\details}[1]{
      \ \\
      {\color{OliveGreen}
        \textbf{Details:} #1
      }
      \\
  }
  \newcommand{\details}[1]{}
\begin{document}

\title[Group partition categories]{Group partition categories}

\author{Samuel Nyobe Likeng}
\address[S.N.L]{
  Department of Mathematics and Statistics \\
  University of Ottawa \\
  Ottawa, ON K1N 6N5, Canada
}
\email{snyob030@uottawa.ca}

\author{Alistair Savage}
\address[A.S.]{
  Department of Mathematics and Statistics \\
  University of Ottawa \\
  Ottawa, ON K1N 6N5, Canada
}
\urladdr{\href{https://alistairsavage.ca}{alistairsavage.ca}, \textrm{\textit{ORCiD}:} \href{https://orcid.org/0000-0002-2859-0239}{orcid.org/0000-0002-2859-0239}}
\email{alistair.savage@uottawa.ca}

\begin{abstract}
    To every group $G$ we associate a linear monoidal category $\Par(G)$ that we call a \emph{group partition category}.  We give explicit bases for the morphism spaces and also an efficient presentation of the category in terms of generators and relations.  We then define an embedding of $\Par(G)$ into the group Heisenberg category associated to $G$.  This embedding intertwines the natural actions of both categories on modules for wreath products of $G$.  Finally, we prove that the additive Karoubi envelope of $\Par(G)$ is equivalent to a wreath product interpolating category introduced by Knop, thereby giving a simple concrete description of that category.
\end{abstract}

\subjclass[2020]{Primary 18M30; Secondary 20E22}

\keywords{Partition category, Heisenberg category, Deligne category, wreath product, string diagram, linear monoidal category}

\ifboolexpr{togl{comments} or togl{details}}{%
  {\color{magenta}DETAILS OR COMMENTS ON}
}{%
}

\maketitle
\thispagestyle{empty}

\tableofcontents

\section{Introduction}

The \emph{partition category} is a $\kk$-linear monoidal category, depending on a parameter $d$ in the commutative ground ring $\kk$, that encodes the homomorphism spaces between tensor powers of the permutation representation of all the finite symmetric groups in a uniform way.  Its additive Karoubi envelope is the category $\Rep(S_d)$, introduced by Deligne in \cite{Del07}.  Deligne's category $\Rep(S_d)$ interpolates between categories of representations of symmetric groups in the sense that the category of representations of $S_n$ is equivalent to the quotient of $\Rep(S_n)$ by the tensor ideal of negligible morphisms.

When working with linear monoidal categories in practice, it is useful to have two descriptions.  First, one would like to have an explicit basis for each morphism space, together with an explicit rule for the tensor product and composition of elements of these bases.  Second, one wants an efficient presentation of the category in terms of generators and relations.  Such a presentation is particularly useful when working with categorical actions, since one can define the action of generators and check the relations.  Both descriptions exist for the partition category.  Bases for morphisms spaces are given in terms of \emph{partition diagrams} with simple rules for composition and tensor product.  Additionally, there is an efficient presentation, which can be summarized as the statement that the partition category is the free $\kk$-linear symmetric monoidal category generated by a $d$-dimensional special commutative Frobenius object.

Deligne's original paper \cite{Del07} has inspired a great deal of further research.  Of particular importance for the current paper are the generalizations of Knop and Mori.  In \cite{Kno07}, Knop generalized Deligne's construction by embedding a regular category $\cA$ into a family of pseudo-abelian tensor categories $\cT(\cA,\delta)$, which are the additive Karoubi envelope of categories $\cT^0(\cA,\delta)$ depending on a degree function $\delta$.   Deligne's original construction corresponds to the case where $\cA$ is the category of finite boolean algebras (equivalently, the opposite of the category of finite sets).  Knop's construction, which is inspired by the calculus of relations on $\cA$, has the advantage of being very general, but the disadvantage of being rather abstract.  In particular, Knop does not give a presentation of his categories in terms of generators and relations.

In \cite{Mor12}, Mori generalized Deligne's construction in a somewhat different direction.  For each $d \in \kk$, Mori defines a 2-functor $\cS_d$ sending a tensor category $\cC$ to another tensor category $\cS_d(\cC)$, which should be thought of as a sort of interpolating wreath product functor.  Morphisms are described in terms of recollements and one has a presentation using the string diagram calculus for braided monoidal categories.

In the current paper, we are interested in a setting where the constructions of Knop and Mori are closely related.  This occurs when $\cA$ is the category of finite boolean algebras with a locally free action of a finite group  $G$, and when $\cC$ is the category of representations of $G$.  With these choices, the categories defined by Knop and Mori can both be viewed as interpolating categories for the categories of representations of the wreath products $G^n \rtimes \fS_n$.  In fact, Mori's interpolating category contains Knop's as a full subcategory; see \cite[Rem.~4.14]{Mor12} for a precise statement.  These ``wreath Deligne categories'' and other variations have been further studied in \cite{Eti14,FS18,Har16,Ryb18,Ryb19}.

Wreath products of groups and algebras appear in a surprising number of areas of mathematics, including vertex operators, the geometry of the Hilbert scheme, and categorification.  In particular, to every Frobenius algebra (or, more generally, graded Frobenius superalgebra) $A$ and choice of central charge $k \in \Z$, there is a \emph{Frobenius Heisenberg category} $\Heis_k(A)$, introduced in \cite{RS17,Sav19} and further studied in \cite{BSW20}.  When $k=\pm 1$, this category encodes the representation theory of \emph{all} the wreath product algebras $A^{\otimes n} \rtimes \fS_n$, $n \in \N$, \emph{simultaneously}.  (For other choices of $k$ it encodes the representation theory of more general cyclotomic quotients of the affine wreath product algebras introduced in \cite{Sav20}.)  Generating objects of $\Heis_{\pm 1}(A)$ correspond to induction and restriction functors with respect to the natural embedding $A^{\otimes n} \rtimes \fS_n \hookrightarrow A^{\otimes (n+1)} \rtimes \fS_{n+1}$.

In the current paper, we are interested in the case where the Frobenius algebra $A$ is the group algebra of a finite group $G$.  In this case, we call $\Heis(G) := \Heis_{-1}(\kk G)$ the \emph{group Heisenberg category}.  When $G$ is trivial, this category was first introduced by Khovanov in \cite{Kho14}.  In \cite{NS19}, the authors described a natural embedding of the partition category into Khovanov's Heisenberg category.  This, in turn, induces an embedding of Deligne's interpolating category $\Rep(S_d)$ into the additive Karoubi envelope of the Heisenberg category.

The goal of the current paper is to give simple, explicit descriptions of wreath product analogues of partition categories and to relate these to group Heisenberg categories.  First, to any group $G$ we associate a \emph{$G$-partition category} $\Par(G)$ (\cref{GPCbasis}).  The definition is given in terms of explicit \emph{$G$-partition diagrams}, which form bases for the morphism spaces of the category.  We then give an efficient presentation of $\Par(G)$ in terms of generators and relations (\cref{twocats}).  There is a natural categorical action of the $G$-partition category on tensor products of permutation representations of wreath products of $G$.  This action can be described in terms of the generators (\cref{hide}) or the bases of $G$-partition diagrams (\cref{forest}).  The action functor is full, and we give an explicit description of its kernel (\cref{kangaroo}).  This gives a categorical analogue of a double centralizer property akin to Schur--Weyl duality, generalizing work of Bloss \cite{Blo03} who defined $G$-colored partition algebras which are isomorphic to the endomorphism algebras in $\Par(G)$.

Next, we give an explicit embedding of $\Par(G)$ into the group Heisenberg category $\Heis(G)$ (\cref{hitchcock}), generalizing the main result of \cite{NS19}.  This embedding intertwines the natural categorical actions of $\Par(G)$ and $\Heis(G)$ on modules for wreath products (\cref{wactcom}).

Finally, we prove (\cref{Knop}) that the group partition category $\Par(G)$ is equivalent to Knop's category $\cT^0(\cA,\delta)$ when $\cA$ is the category of finite boolean algebras with a locally free $G$-action (equivalently, the opposite of the category of finite sets with a free $G$-action).  Thus, one can view $\Par(G)$ as a concrete, and very explicit, realization of the wreath product interpolating categories of Knop and Mori.  In particular, the calculus of $G$-partition diagrams is significantly simpler than the previous constructions.  (Although the latter are, of course, more general.)  In addition, the presentation of $\Par(G)$, in terms of generators and relations, given in the current paper is considerably more efficient that the presentation given by Mori in \cite[Prop.~4.26]{Mor12}; see \cref{Mori} for further details.

The structure of the current paper is as follows.  In \cref{sec:wreath} we recall some basic facts about wreath products.  We define our main object of interest, the group partition category, in \cref{sec:ParG}.  We then give a presentation of $\Par(G)$ in terms of generators and relations in \cref{sec:present}.  In \cref{sec:action} we define the natural categorical action of $\Par(G)$.  We recall the definition of the group Heisenberg category $\Heis(G)$ in \cref{sec:HeisG} and then, in \cref{sec:embed}, we define the embedding of $\Par(G)$ into $\Heis(G)$.  In \cref{sec:compat} we prove that this embedding intertwines the natural categorical actions of these categories on modules for wreath products.  Finally, in \cref{sec:Knop} we relate $\Par(G)$ to the constructions of Knop and Mori.

\subsection*{Acknowledgements}

This research of A.~Savage was supported by Discovery Grant RGPIN-2017-03854 from the Natural Sciences and Engineering Research Council of Canada.  S.~Nyobe Likeng was also supported by this Discovery Grant.  The authors would like to thank S.~Henry and P.~Scott for helpful conversations concerning boolean algebras.

\section{Wreath products\label{sec:wreath}}

Fix a commutative ground ring $\kk$ and a group $G$ with identity element $1_G$.  We use an unadorned tensor product $\otimes$ to denote the tensor product over $\kk$.  We will often define linear maps on tensor products by specifying the images of simple tensors; such maps are always extended by linearity.

For $n \ge 1$, the symmetric group $\fS_n$ acts on $G^n$ by permutation of the factors, where we \emph{number factors from right to left}:
\[
    \pi \cdot (g_n, \dotsc, g_1)
    = (g_{\pi^{-1}(n)}, \dotsc, g_{\pi^{-1}(1)}).
\]
The \emph{wreath product group} $G_n := G^n \rtimes \fS_n$ has underlying set $G^n \times \fS_n$, and multiplication
\[
    (\bg, \pi) (\bh, \sigma) = (\bg (\pi \cdot \bh), \pi \sigma),\quad
    \bg,\bh \in G^n,\ \pi, \sigma \in \fS_n.
\]
We identify $G^n$ and $\fS_n$ with the subgroups $G^n \times \{1_{\fS_n}\}$ and $\{1_G\} \times \fS_n$, respectively, of $G^n \rtimes \fS_n$.  Hence we write $\bg \pi$ for $(\bg, \pi)$.

Let $A := \kk G$ be the group algebra of $G$.  Then the group algebra $A_n := \kk G_n$ is isomorphic to the \emph{wreath product algebra} $A^{\otimes n} \rtimes \fS_n$, which is isomorphic to $A^{\otimes n} \otimes \kk \fS_n$ as a $\kk$-module, with multiplication given by
\[
    (\ba \otimes \pi) (\bb \otimes \sigma)
    = \ba (\pi \cdot \bb) \otimes \pi \sigma,\quad
    \ba,\bb \in A^{\otimes n},\ \pi,\sigma \in \fS_n.
\]
We adopt the convention that $G_0$ is the trivial group, so that $A_0 = \kk$.

For $1 \le i \le n-1$, let $s_i \in \fS_n$ be the simple transposition of $i$ and $i+1$.  The elements
\[
  \pi_i := s_i s_{i+1} \dotsm s_{n-1},\quad i=1,\dotsc,n,
\]
form a complete set of left coset representatives of $\fS_{n-1}$ in $\fS_n$.  Here we adopt the convention that $\pi_n=1_{\fS_n}$.

There is an injective group homomorphism
\[
    G_{n-1} \hookrightarrow G_n,\quad
    (g_{n-1},\dotsc,g_1) \mapsto (1_G,g_{n-1},\dotsc,g_1).
\]
This induces an embedding of the wreath product algebra $A_{n-1}$ into $A_n$.  For $g \in G$ and $1 \le i \le n$, define
\[
    g^{(i)} := (\underbrace{1_G,\dotsc,1_G}_{n-i \text{ entries}}, g, \underbrace{1_G,\dotsc,1_G}_{i-1 \text{ entries}}) \in G^n.
\]
Then the set
\begin{equation} \label{Carslaw}
    \{ g^{(i)} \pi_i = \pi_i g^{(n)} : 1 \le i \le n,\ g \in G \}
\end{equation}
is a complete set of left coset representatives of $G_{n-1}$ in $G_n$.  Hence it is a basis for $A_n$ as a right $A_{n-1}$-module.

Throughout the paper, we adopt the convention that $\bg = (g_n,\dotsc,g_1)$ and $\bh = (h_n,\dotsc,h_1)$.  That is, $g_i$ denotes the $i$-th component of $\bg$ (counting from right to left, as usual), and similarly for $h_i$.  This convention applies to all boldface letters denoting elements of $G^n$ for some $n \in \N$.

\begin{defin} \label{permrep}
    The \emph{permutation representation} of $A_n$ is the $\kk$-module $V = A^n$, with action given by
    \begin{equation} \label{perm1}
        \bg \pi \cdot (a_n,\dotsc,a_1)
        = (g_n a_{\pi^{-1}(n)}, \dotsc, g_1 a_{\pi^{-1}(1)}),
    \end{equation}
    $\bg \in G^n$, $\pi \in \fS_n$, $a_n,\dotsc,a_1 \in A$, extended by linearity.  In other words, $\fS_n$ acts by permuting the entries of elements of $A^n$, while $G^n$ acts by componentwise multiplication.
\end{defin}

For $i=1,\dotsc,n$, define $e_i=(0,\dotsc,0,1,0,\dotsc,0) \in V$, where the $1$ appears in the $i$-th position.  Then \cref{perm1} implies that
\begin{equation} \label{perm2}
    \bg \pi \cdot (ae_{i}) = g_{\pi(i)} a e_{\pi(i)},\quad
    \bg \in G^n,\ \pi \in \fS_n,\ a \in A,\ 1 \le i \le n.
\end{equation}
The set
\begin{equation} \label{basis}
    \BV := \{ g_k e_{i_k} \otimes \dotsb \otimes g_1 e_{i_1} : g_1,\dotsc,g_k \in G,\ 1 \le i_1,\dotsc,i_k \le n\}
\end{equation}
is a basis for $V^{\otimes k}$.

For the remainder of the paper, we write $\otimes_n$ for $\otimes_{A_n}$.

\begin{lem}
    For any $A_n$-module $W$, we have an isomorphism of $A_n$-modules
    \[
        A_n \otimes_{n-1} W \to V \otimes W,\quad
        \gamma \otimes w \mapsto \gamma e_n \otimes \gamma w,\quad \gamma \in G_n,\ w \in W,
    \]
    with inverse
    \[
        V \otimes W \to A_n \otimes_{n-1} W,\quad
        g e_i \otimes w \mapsto g^{(i)} \pi_i \otimes \pi_i^{-1} \left(g^{-1}\right)^{(i)} w,\quad 1 \le i \le n,\ g \in G,\ w \in W.
    \]
\end{lem}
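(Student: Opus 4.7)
The plan is to denote the proposed maps $\Phi$ (forward) and $\Psi$ (backward), check that each is a well-defined $\kk$-linear map, and then verify the two composites $\Psi \circ \Phi$ and $\Phi \circ \Psi$ on generators. Since $\Phi$ is manifestly $A_n$-linear from its definition, once it is shown to be bijective its inverse $\Psi$ will automatically be $A_n$-linear as well.

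To show $\Phi$ descends from $A_n \times W$ to $A_n \otimes_{n-1} W$, I would verify $A_{n-1}$-balancedness, which by $\kk$-linearity reduces to checking that $(\gamma \alpha) e_n \otimes (\gamma \alpha) w = \gamma e_n \otimes (\gamma \alpha) w$ for $\gamma \in G_n$ and $\alpha \in G_{n-1}$; the right-hand tensor factors agree on the nose, and the left-hand ones agree provided $\alpha e_n = e_n$. Writing $\alpha = \bh \sigma$ with $h_n = 1_G$ and $\sigma \in \fS_{n-1}$ fixing $n$, \cref{perm2} gives $\alpha e_n = h_{\sigma(n)} e_{\sigma(n)} = h_n e_n = e_n$, as required. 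For $\Psi$, since $V$ is a free $\kk$-module with basis $\{ g e_i : g \in G,\ 1 \le i \le n\}$, the stated formula extends unambiguously by $\kk$-bilinearity to a well-defined map $V \otimes W \to A_n \otimes_{n-1} W$.

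For $\Psi \circ \Phi = \id$, I would invoke the coset decomposition \cref{Carslaw}: every simple tensor in $A_n \otimes_{n-1} W$ may, after moving an $A_{n-1}$-factor across, be written as $g^{(i)} \pi_i \otimes w'$. The key identity is $\pi_i(n) = i$, immediate from $\pi_i = s_i s_{i+1} \dotsm s_{n-1}$; combined with \cref{perm2} this yields $g^{(i)} \pi_i e_n = g e_i$. Thus $\Phi(g^{(i)} \pi_i \otimes w') = g e_i \otimes g^{(i)} \pi_i w'$, and applying $\Psi$ returns $g^{(i)} \pi_i \otimes \pi_i^{-1} (g^{-1})^{(i)} g^{(i)} \pi_i w' = g^{(i)} \pi_i \otimes w'$, since $(g^{-1})^{(i)} g^{(i)}$ is the identity of $G^n$. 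Conversely, $\Phi \Psi(g e_i \otimes w) = g^{(i)} \pi_i e_n \otimes g^{(i)} \pi_i \pi_i^{-1} (g^{-1})^{(i)} w = g e_i \otimes w$, completing the verification.

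The proof is more bookkeeping than substance; the only delicate point is tracking the right-to-left indexing convention and confirming $\pi_i(n) = i$ from the definition of $\pi_i$. Once that identity is in hand, every remaining step is an immediate cancellation in $G^n$ or a direct application of \cref{perm2}.
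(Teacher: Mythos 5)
Your argument is correct and follows the same route as the paper, which simply notes that the first map is well defined because $A_{n-1}$ acts trivially on $e_n$ and that the inverse check is straightforward; you have filled in exactly those verifications (the computation $\alpha e_n = e_n$ via \cref{perm2}, the identity $\pi_i(n) = i$, and the two composites on coset representatives from \cref{Carslaw}). No gaps.
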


\begin{proof}
    The first map is well-defined since $A_{n-1}$ acts trivially on $e_n$, and it is clearly a homomorphism of $A_n$-modules.  It is straightforward to verify that the second map is the inverse of the first.
\end{proof}

Denote by $\mathbf{1}_n$ the trivial one-dimensional $A_n$-module.  Let $B$ denote $A_n$, considered as an $(A_n,A_{n-1})$-bimodule, and, for $k \ge 1$, define
\[
    B^k := \underbrace{B \otimes_{n-1} B \otimes_{n-1} \dotsb \otimes_{n-1} B}_{k \text{ factors}}.
\]

\begin{cor}\label{definbeta}
    For $k \ge 1$, we have an isomorphism of $A_n$-modules
    \begin{align*}
        \beta_k \colon V^{\otimes k} &\xrightarrow{\cong} B^k \otimes \mathbf{1}_n, \\
        g_k e_{i_k} \otimes \dotsb \otimes g_1 e_{i_1}
        &\mapsto g_k^{(i_k)} \pi_{i_k} \otimes \pi_{i_k}^{-1} \left( g_k^{-1} \right)^{(i_k)} g_{k-1}^{(i_{k-1})} \pi_{i_{k-1}} \otimes \dotsb \otimes \pi_{i_2}^{-1} \left( g_2^{-1} \right)^{(i_2)} g_1^{(i_1)} \pi_{i_1} \otimes 1,
    \end{align*}
    with inverse map
    \begin{align*}
        \beta_k^{-1} \colon B^k \otimes \mathbf{1}_n &\xrightarrow{\cong} V^{\otimes k}, \\
        \gamma_k \otimes \dotsb \otimes \gamma_1 \otimes 1 &\mapsto
        (\gamma_k e_n) \otimes (\gamma_k \gamma_{k-1} e_n) \otimes \dotsb \otimes (\gamma_k \dotsm \gamma_1 e_n), \quad \gamma_k, \dotsc, \gamma_1 \in G_n.
    \end{align*}
\end{cor}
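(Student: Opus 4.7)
The plan is to derive both the map and its inverse by induction on $k$, iterating the preceding lemma at each step.

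For the base case $k=1$, applying the lemma with $W = \mathbf{1}_n$ (and identifying $V \otimes \mathbf{1}_n$ with $V$) gives $\beta_1(g e_i) = g^{(i)} \pi_i \otimes 1$ and $\beta_1^{-1}(\gamma \otimes 1) = \gamma e_n$, matching the stated formulas.

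For the inductive step with $k \ge 2$, I would apply the lemma with $W = B^{k-1} \otimes \mathbf{1}_n$ to obtain an isomorphism
\[
B^k \otimes \mathbf{1}_n = A_n \otimes_{n-1} \bigl( B^{k-1} \otimes \mathbf{1}_n \bigr) \xrightarrow{\cong} V \otimes \bigl( B^{k-1} \otimes \mathbf{1}_n \bigr),
\]
and then compose with $\id_V \otimes \beta_{k-1}^{-1}$ (via the inductive hypothesis) to define $\beta_k^{-1}$, running the same composition in the opposite direction to define $\beta_k$. That $\beta_k$ and $\beta_k^{-1}$ are mutually inverse follows formally from the corresponding inverse relation at each stage.

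To confirm the explicit formulas, I would trace through the compositions. For $\beta_k^{-1}$, starting from $\gamma_k \otimes \gamma_{k-1} \otimes \dotsb \otimes \gamma_1 \otimes 1$, the lemma produces
\[
\gamma_k e_n \otimes \bigl( \gamma_k \gamma_{k-1} \otimes \gamma_{k-2} \otimes \dotsb \otimes \gamma_1 \otimes 1 \bigr),
\]
since the $A_n$-action on $B^{k-1} \otimes \mathbf{1}_n$ is by left multiplication on the leftmost factor; by induction the second tensorand then unfolds to $\gamma_k \gamma_{k-1} e_n \otimes \gamma_k \gamma_{k-1} \gamma_{k-2} e_n \otimes \dotsb \otimes \gamma_k \dotsm \gamma_1 e_n$. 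A parallel calculation using the lemma's inverse $g e_i \otimes w \mapsto g^{(i)} \pi_i \otimes \pi_i^{-1} (g^{-1})^{(i)} w$ recovers the displayed formula for $\beta_k$.

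Since the entire argument amounts to iterating a single already-proved bijection, there is no real conceptual obstacle; the only item to monitor is the bookkeeping of the indices in the formula for $\beta_k$, in particular how the factor $\pi_{i_j}^{-1} (g_j^{-1})^{(i_j)}$ introduced at the $j$-th stage combines with the $g_{j-1}^{(i_{j-1})} \pi_{i_{j-1}}$ produced at the next step.
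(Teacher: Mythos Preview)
Your proposal is correct and matches the paper's approach: the paper states this result as a corollary of the preceding lemma without giving any proof, and your induction on $k$---peeling off one tensor factor at a time via that lemma---is precisely the intended argument. Your bookkeeping for both $\beta_k$ and $\beta_k^{-1}$ is accurate.
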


\section{Group partition categories\label{sec:ParG}}

We continue to fix a group $G$ and a commutative ring $\kk$.  For $k,l \in \N = \Z_{\ge 0}$, a \emph{partition of type $\binom{l}{k}$} is a partition of the set $\PS_k^l := \{1,\dotsc,k,1',\dotsc,l'\}$.  A \emph{$G$-partition of type $\binom{l}{k}$} is a pair $(P,\bg)$, where $P$ is a partition of type $\binom{l}{k}$ and $\bg = (g_1,\dotsc,g_k,g_{1'},\dotsc,g_{l'}) \in G^{\PS_k^l}$.  We define a \emph{part} of $(P,\bg)$ to be a part of the partition $P$.

Let $(P,\bg)$ and $(P,\bh)$ be $G$-partitions of type $\binom{l}{k}$, with $P = \{P_1, \dotsc, P_r\}$.  We say these $G$-partitions are \emph{equivalent}, and we write $(P,\bg) \sim (P,\bh)$, if there exist $t_1,\dotsc,t_r \in G$ such that, for each $i=1,\dotsc,r$, we have
\[
    h_a = t_i g_a
    \quad \text{for every } a \in P_i.
\]
This clearly defines an equivalence relation on the set of $G$-partitions of type $\binom{l}{k}$.  We let $[P,\bg]$ denote the equivalence class of $(P,\bg)$.

We depict the $G$-partition $(P,\bg)$ of type $\binom{l}{k}$ as a graph with $l$ vertices in the top row, labelled $g_{1'},\dotsc,g_{l'}$ from \emph{right to left}, and $k$ vertices in the bottom row, labelled $g_1,\dotsc,g_k$ from \emph{right to left}.  (We will always number vertices from right to left.)  We draw edges so that the parts of the partition are the connected components of the graph.

For example, the equivalence class of the $G$-partition $(P,\bg)$ of type $\binom{7}{5}$ with
\[
    P = \big\{ \{1,5\}, \{2\}, \{3,1'\}, \{4,4',7'\}, \{2', 3'\}, \{5'\}, \{6'\} \big\}
\]
can be depicted as follows:
\[
  \begin{tikzpicture}[anchorbase]
    \pdg{0.5,0}{north}{g_5};
    \pdg{1,0}{north}{g_4};
    \pdg{1.5,0}{north}{g_3};
    \pdg{2,0}{north}{g_2};
    \pdg{2.5,0}{north}{g_1};
    \pdg{0,1}{south}{g_{7'}};
    \pdg{0.5,1}{south}{g_{6'}};
    \pdg{1,1}{south}{g_{5'}};
    \pdg{1.5,1}{south}{g_{4'}};
    \pdg{2,1}{south}{g_{3'}};
    \pdg{2.5,1}{south}{g_{2'}};
    \pdg{3,1}{south}{g_{1'}};
    \draw (1,0) \braidto (0,1);
    \draw (1,0) \braidto (1.5,1);
    \draw (0.5,0) to[out=up,in=up] (2.5,0);
    \draw (1.5,0) \braidto (3,1);
    \draw (2,1) to (2.5,1);
  \end{tikzpicture}
\]
We call this a \emph{$G$-partition diagram}.  Forgetting the labels, we obtain a \emph{partition diagram} for $P$.  Note that different $G$-partition diagrams can correspond to the same $G$-partition
since only the connected components of the graph are relevant, and similarly for partition
diagrams.  Two $G$-partition diagrams are equivalent if their graphs have the same connected components and the vertex labels of one are obtained from those of the other by, for each connected component, multiplying the labels in that component on the left by the same element of $G$.

\begin{eg}\label{equiv-G-diag}
    For $g,h,s,t \in G$, the following $G$-partition diagrams of type $\binom{4}{4}$ are equivalent:
    \[
        \begin{tikzpicture}[anchorbase]
            \pdg{0.75,0}{north}{g_4};
            \pdg{1.5,0}{north}{g_3};
            \pdg{2.25,0}{north}{g_2};
            \pdg{3,0}{north}{g_1};
            \pdg{0.75,1}{south}{g_{4'}};
            \pdg{1.5,1}{south}{g_{3'}};
            \pdg{2.25,1}{south}{g_{2'}};
            \pdg{3,1}{south}{g_{1'}};
            \draw (1.5,0) \braidto (0.75,1);
            \draw (2.25,0) \braidto (3,1);
            \draw (1.5,0) to[out=up,in=up] (3,0);
            \draw (1.5,1) to[out=down,in=down] (2.25,1);
        \end{tikzpicture}
        \ \sim \
        \begin{tikzpicture}[anchorbase]
            \pdg{0.75,0}{north}{tg_4};
            \pdg{1.5,0}{north}{gg_3};
            \pdg{2.25,0}{north}{sg_2};
            \pdg{3,0}{north}{gg_1};
            \pdg{0.75,1}{south}{gg_{4'}};
            \pdg{1.5,1}{south}{hg_{3'}};
            \pdg{2.25,1}{south}{hg_{2'}};
            \pdg{3,1}{south}{sg_{1'}};
            \draw (1.5,0) \braidto (0.75,1);
            \draw (2.25,0) \braidto (3,1);
            \draw (3,0) to[out=up,in=-45] (0.75,1);
            \draw (1.5,1) to[out=down,in=down] (2.25,1);
        \end{tikzpicture}
    \]
\end{eg}

Suppose $P$ is a partition of type $\binom{l}{k}$ and $Q$ is a partition of type $\binom{m}{l}$.  We can stack the partition diagram of $Q$ on top of the partition diagram of $P$ and identify the middle row of vertices to obtain a diagram $\stack(Q,P)$ with three rows of vertices.  We define $Q \star P$ to be the partition of type $\binom{k}{m}$  defined as follows: vertices are in the same part of $Q \star P$ if and only if the corresponding vertices in the top and bottom row of $\stack(Q,P)$ are in the same connected component.  We let $\alpha(Q,P)$ denote the number of connected components containing only vertices in the middle row of $\stack(Q,P)$.

\begin{eg} \label{logger}
    If
    \[
        P =
        \begin{tikzpicture}[anchorbase]
            \pd{1.5,0};
            \pd{2,0};
            \pd{2.5,0};
            \pd{3,0};
            \pd{0,1};
            \pd{0.5,1};
            \pd{1,1};
            \pd{1.5,1};
            \pd{2,1};
            \pd{2.5,1};
            \pd{3,1};
            \pd{3.5,1};
            \pd{4,1};
            \pd{4.5,1};
            \pd{5,1};
            \draw (1.5,0) \braidto (0.5,1);
            \draw (1.5,0) \braidto (2.5,1);
            \draw (2.5,0) \braidto (1,1);
            \draw (2.5,0) \braidto (2,1) -- (1.5,1);
            \draw (2,0) \braidto (3,1);
            \draw (3.5,1) to[out=down,in=down] (4.5,1);
            \draw (3,0) \braidto (5,1);
        \end{tikzpicture}
        \qquad \text{and}  \qquad
        Q =
        \begin{tikzpicture}[anchorbase]
            \pd{0,0};
            \pd{0.5,0};
            \pd{1,0};
            \pd{1.5,0};
            \pd{2,0};
            \pd{2.5,0};
            \pd{3,0};
            \pd{3.5,0};
            \pd{4,0};
            \pd{4.5,0};
            \pd{5,0};
            \pd{1,1};
            \pd{1.5,1};
            \pd{2,1};
            \pd{2.5,1};
            \pd{3,1};
            \draw (0,0) -- (0.5,0) \braidto (1,1);
            \draw (1.5,0) \braidto (1,1);
            \draw (1,0) \braidto (1.5,1);
            \draw (2.5,0) \braidto (1.5,1);
            \draw (3,0) -- (3,1);
            \draw (2,1) -- (2.5,1);
            \draw (5,0) \braidto (3,1);
        \end{tikzpicture}
    \]
    then $\alpha(P,Q) = 2$ and
    \[
        \stack(Q,P)
        \  =  \
        \begin{tikzpicture}[anchorbase]
            \pd{1.5,0};
            \pd{2,0};
            \pd{2.5,0};
            \pd{3,0};
            \pd{0,1};
            \pd{0.5,1};
            \pd{1,1};
            \pd{1.5,1};
            \pd{2,1};
            \pd{2.5,1};
            \pd{3,1};
            \pd{3.5,1};
            \pd{4,1};
            \pd{4.5,1};
            \pd{5,1};
            \pd{1,2};
            \pd{1.5,2};
            \pd{2,2};
            \pd{2.5,2};
            \pd{3,2};
            \draw (0,1) -- (0.5,1) \braidto (1,2);
            \draw (1.5,1) \braidto (1,2);
            \draw (1,1) \braidto (1.5,2);
            \draw (2.5,1) \braidto (1.5,2);
            \draw (3,1) -- (3,2);
            \draw (2,2) -- (2.5,2);
            \draw (1.5,0) \braidto (0.5,1);
            \draw (1.5,0) \braidto (2.5,1);
            \draw (2.5,0) \braidto (1,1);
            \draw (2.5,0) \braidto (2,1) -- (1.5,1);
            \draw (2,0) \braidto (3,1);
            \draw (3.5,1) to[out=down,in=down] (4.5,1);
            \draw (3,0) \braidto (5,1) \braidto (3,2);
        \end{tikzpicture}
        \ ,\quad
        Q \star P =
        \begin{tikzpicture}[anchorbase]
            \pd{0.5,0};
            \pd{1,0};
            \pd{1.5,0};
            \pd{2,0};
            \pd{0,1};
            \pd{0.5,1};
            \pd{1,1};
            \pd{1.5,1};
            \pd{2,1};
            \draw (0.5,0) -- (0.5,1) -- (0,1);
            \draw (1.5,0) \braidto (0.5,1);
            \draw (1,0) \braidto (2,1);
            \draw (1,1) -- (1.5,1);
            \draw (2,0) -- (2,1);
        \end{tikzpicture}
        \ .
    \]
\end{eg}

Suppose $(P,\bg)$ and $(Q,\bh)$ are $G$-partitions of types $\binom{l}{k}$ and $\binom{m}{l}$, respectively.  We define $\stack((Q,\bh),(P,\bg))$ to be the graph $\stack(Q,P)$ with vertices labeled by elements of $G$ as follows: vertices in the top and bottom rows are labeled as in the top and bottom rows of $Q$ and $P$, respectively, while the $i$-th vertex in the middle row is labelled by the product $g_{i'} h_i^{-1}$.  (As usual, we label vertices from right to left.)  We say that the pair $((Q,\bh),(P,\bg))$ is \emph{compatible} if any two vertices in the middle row of $\stack((Q,\bh),(P,\bg))$ that are in the same connected component of $Q$ have the same label.  If $((Q,\bh),(P,\bg))$ is compatible, we define $\bh \star_{Q,P} \bg$ to be the element $\bt \in G^{\PS_k^m}$ where
\begin{itemize}
    \item for $1 \le i \le m$, $t_{i'} = g h_{i'}$, where $g$ is the common label of vertices in the middle row that are in the same connected component as the $i$-th vertex of the top row of $Q$ (i.e.\ the vertex labeled by $h_{i'}$), where we adopt the convention $g=1_G$ if there are no such vertices;
    \item for $1 \le i \le k$, $t_i = g_i$.
\end{itemize}

\begin{lem} \label{cat}
    Suppose that $((Q,\bh),(P,\bg))$ and $((Q,\bh'),(P,\bg'))$ are compatible, that $(Q,\bh) \sim (Q,\bh')$, and that $(P,\bg) \sim (P,\bg')$.  Then $(Q \star P, \bh \star_{Q,P} \bg) \sim (Q \star P, \bh' \star_{Q,P} \bg')$.
\end{lem}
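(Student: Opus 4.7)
The plan is to build, for each part $C$ of $Q \star P$, an element $u_C \in G$ witnessing the desired equivalence on $C$. My strategy is to define a function $u$ on every vertex of the three-row diagram $\stack(Q,P)$ (including the middle row), show that $u$ is constant on each connected component, and take $u_C$ to be the common value of $u$ on the vertices of $C$.

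Write $P = \{P_1, \dotsc, P_p\}$ and $Q = \{Q_1, \dotsc, Q_q\}$, and let $r_1, \dotsc, r_p, s_1, \dotsc, s_q \in G$ be the elements provided by the hypotheses, so $g'_b = r_i g_b$ for $b \in P_i$ and $h'_a = s_j h_a$ for $a \in Q_j$. I will set $u(a) := r_i$ for a bottom vertex $a \in P_i$; $u(k) := r_i$ for a middle vertex $k$ with $k' \in P_i$; and, for a top vertex $b' \in Q_j$, $u(b') := u(k)$ for any middle vertex $k \in Q_j$ if such a $k$ exists, or $u(b') := s_j$ otherwise.

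The crux---and the only place both compatibility hypotheses are used together---is the identity: \emph{if $k_1, k_2$ are middle vertices lying in the same part $Q_j$ of $Q$, then $u(k_1) = u(k_2)$.} Indeed, compatibility of $((Q,\bh),(P,\bg))$ gives $g_{k_1'} h_{k_1}^{-1} = g_{k_2'} h_{k_2}^{-1}$, and compatibility of $((Q,\bh'),(P,\bg'))$ gives the analogous primed equation. Substituting $g'_{k_\ell'} = r_{a_\ell} g_{k_\ell'}$ (where $k_\ell' \in P_{a_\ell}$) and $h'_{k_\ell} = s_j h_{k_\ell}$ into the primed equation and cancelling the common factor $g_{k_\ell'} h_{k_\ell}^{-1} s_j^{-1}$ forces $r_{a_1} = r_{a_2}$. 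This settles well-definedness of $u$ on top vertices; constancy of $u$ along the other edges of $\stack(Q,P)$---the bottom--middle and within-row $P$-edges, which preserve the relevant $r$-factor, and the top--top $Q$-edges in a $Q$-part lacking a middle vertex, which preserve $s_j$---is then immediate from the definition.

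Finally, I verify $t'_c = u(c) t_c$ for every $c \in C$, where $\bt := \bh \star_{Q,P} \bg$ and $\bt' := \bh' \star_{Q,P} \bg'$. For a bottom vertex $c = a \in P_i$, $t_a = g_a$ and $t'_a = r_i g_a = u(a) t_a$. For a top vertex $c = b' \in Q_j$ whose $Q$-part contains no middle vertex, $t_{b'} = h_{b'}$ and $t'_{b'} = s_j h_{b'} = u(b') t_{b'}$. For a top vertex $c = b' \in Q_j$ with some middle vertex $k_0 \in Q_j$ (say $k_0' \in P_{a_0}$), expanding $t'_{b'} = g'_{k_0'} (h'_{k_0})^{-1} h'_{b'}$ using $h'_{k_0} = s_j h_{k_0}$, $h'_{b'} = s_j h_{b'}$, and $g'_{k_0'} = r_{a_0} g_{k_0'}$ yields $t'_{b'} = r_{a_0} g_{k_0'} h_{k_0}^{-1} h_{b'} = u(k_0) t_{b'} = u(b') t_{b'}$. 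Setting $u_C$ to be the common value of $u$ on $C$ then completes the proof. I expect the main obstacle to be the cancellation step establishing $u(k_1) = u(k_2)$ above; every other ingredient is routine bookkeeping.
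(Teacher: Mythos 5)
Your proof is correct. It is close in spirit to the paper's argument --- the crux in both cases is that compatibility of the two pairs forces the per-part constants coming from $(P,\bg)\sim(P,\bg')$ to agree across all $P$-parts meeting a given $Q$-component --- but you organize it differently: the paper first reduces by transitivity to the two special cases $\bh=\bh'$ and $\bg=\bg'$ and then argues componentwise, whereas you handle both changes at once by building a single witness function $u$ on all vertices of $\stack(Q,P)$ and checking it is constant on connected components. Your one-shot version has a small advantage: the paper's transitivity reduction implicitly requires the intermediate pair $((Q,\bh),(P,\bg'))$ to be compatible, which is true but itself needs exactly the cancellation you carry out (it does not follow formally from the hypotheses); by avoiding the reduction you sidestep that gap. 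The price is slightly heavier bookkeeping, but every step you give (the right-cancellation of $g_{k_\ell'}h_{k_\ell}^{-1}s_j^{-1}$ to get $r_{a_1}=r_{a_2}$, and the verification $t'_c=u(c)t_c$ in the three cases) checks out.
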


\begin{proof}
    By transitivity, it suffices to consider the case where $\bh = \bh'$ and the case where $\bg = \bg'$.  Suppose $\bh = \bh'$ and consider a connected component $Y$ of $\stack(Q,P)$.  Then $Y$ is a union of some connected components of $Q$ and some connected components $P_1,\dotsc,P_r$ of $P$.  (The case where $Y$ is a single connected component of $Q$ or a single connected component of $P$ are straightforward, so we assume $Y$ is a union of a positive number of connected components of $P$ and a positive number of connected components of $Q$.)  Since $(P,\bg) \sim (P,\bg')$, there exist $v_1,\dotsc,v_r \in G$ such that $g'_a = v_i g_a$ for all $a \in P_i$, $i \in \{1,2,\dotsc,r\}$.  Now, the fact that $((Q,\bh),(P,\bg))$ and $((Q,\bh),(P,\bg'))$ are compatible and that $P_1,\dotsc,P_r$ are in the same connected component of $\stack(Q,P)$ implies that $v_1=v_2=\dotsb=v_r$.  Thus, if $\bt = \bh \star_{Q,P} \bg$ and $\bt' = \bh \star_{Q,P} \bg'$, we have $t'_a = v_1 t_a$ for all vertices $a \in Y$.  Since this holds for each connected component $Y$ of $\stack(Q,P)$, we have $(Q \star P, \bh \star_{Q,P} \bg) \sim (Q \star P, \bh \star_{Q,P} \bg')$, as desired.  The case where $\bg = \bg'$ is analogous.
\end{proof}

We say that the pair $([Q,\bh], [P,\bg])$ is compatible if there exist representatives $(Q,\bh')$ and $(P,\bg')$ of the equivalence classes $[Q,\bh]$ and $[P,\bg]$ such that $((Q,\bh'),(P,\bg'))$ is compatible.  Whenever we refer to a compatible pair $([Q,\bh], [P,\bg])$, we assume that $((Q,\bh),(P,\bg))$ is a compatible pair of representatives.  By \cref{cat}, we can define
\begin{equation}
    [Q,\bh] \star [P,\bg] := [Q \star P, \bh \star_{Q,P} \bg]
\end{equation}
for a compatible pair $((Q,\bh), (P,\bg))$, and this definition is independent of our choice of a compatible pair of representatives.

\begin{eg} \label{chipper}
    If $P$ and $Q$ are as in \cref{logger}, then
    \[
        \stack((Q,\bh),(P,\bg))
        \  =  \
        \begin{tikzpicture}[anchorbase]
            \pd{1.5,0} node[anchor=north] {\dotlabel{g_4}};
            \pd{2,0} node[anchor=north] {\dotlabel{g_3}};
            \pd{2.5,0} node[anchor=north] {\dotlabel{g_2}};
            \pd{3,0} node[anchor=north] {\dotlabel{g_1}};
            \pd{0,1};
            \pd{0.5,1};
            \pd{1,1};
            \pd{1.5,1};
            \pd{2,1};
            \pd{2.5,1};
            \pd{3,1};
            \pd{3.5,1};
            \pd{4,1};
            \pd{4.5,1};
            \pd{5,1};
            \pd{1,2} node[anchor=south] {\dotlabel{h_{5'}}};
            \pd{1.5,2} node[anchor=south] {\dotlabel{h_{4'}}};
            \pd{2,2} node[anchor=south] {\dotlabel{h_{3'}}};
            \pd{2.5,2} node[anchor=south] {\dotlabel{h_{2'}}};
            \pd{3,2} node[anchor=south] {\dotlabel{h_{1'}}};
            \draw (0,1) -- (0.5,1) \braidto (1,2);
            \draw (1.5,1) \braidto (1,2);
            \draw (1,1) \braidto (1.5,2);
            \draw (2.5,1) \braidto (1.5,2);
            \draw (3,1) -- (3,2);
            \draw (2,2) -- (2.5,2);
            \draw (1.5,0) \braidto (0.5,1);
            \draw (1.5,0) \braidto (2.5,1);
            \draw (2.5,0) \braidto (1,1);
            \draw (2.5,0) \braidto (2,1) -- (1.5,1);
            \draw (2,0) \braidto (3,1);
            \draw (3.5,1) to[out=down,in=down] (4.5,1);
            \draw (3,0) \braidto (5,1) \braidto (3,2);
        \end{tikzpicture}
    \]
    where the vertices in the middle row are labeled $g_{1'} h_1^{-1}$, $g_{2'} h_2^{-1}$, $g_{3'} h_3^{-1}$, $g_{4'} h_4^{-1}$, $g_{5'} h_5^{-1}$, $g_{6'} h_6^{-1}$, $g_{7'} h_7^{-1}$, $g_{8'} h_8^{-1}$, $g_{9'} h_9^{-1}$, $g_{10'} h_{10}^{-1}$, $g_{11'} h_{11}^{-1}$ from right to left.  The pair $((Q,\bh),(P,\bg))$ is compatible if and only if $g_{1'}h_1^{-1} = g_{5'}h_5^{-1}$, $g_{6'} h_6^{-1} = g_{9'} h_9^{-1}$, and $g_{7'} h_7^{-1} = g_{8'} h_8^{-1} = g_{10'} h_{10}^{-1} = g_{11'} h_{11}^{-1}$.  If the pair is compatible, then
    \[
        (Q \star P, \bh \star_{Q,P} \bg) =
        \begin{tikzpicture}[anchorbase]
            \pd{1,0} node[anchor=north] {\dotlabel{g_4}};
            \pd{2,0} node[anchor=north] {\dotlabel{g_3}};
            \pd{3,0} node[anchor=north] {\dotlabel{g_2}};
            \pd{4,0} node[anchor=north] {\dotlabel{g_1}};
            \pd{0,1} node[anchor=east] {\dotlabel{g_{7'} h_7^{-1} h_{5'}}};
            \pd{1,1} node[anchor=south] {\dotlabel{g_{7'} h_7^{-1} h_{4'}}};
            \pd{2,1} node[anchor=south] {\dotlabel{h_{3'}}};
            \pd{3,1} node[anchor=south] {\dotlabel{h_{2'}}};
            \pd{4,1} node[anchor=west] {\dotlabel{g_{1'} h_1^{-1} h_{1'}}};
            \draw (1,0) -- (1,1) -- (0,1);
            \draw (3,0) -- (1,1);
            \draw (2,0) -- (4,1);
            \draw (2,1) -- (3,1);
            \draw (4,0) -- (4,1);
        \end{tikzpicture}
        \ .
    \]
\end{eg}

\begin{conv}
    From now on we will consider equivalent $G$-partition diagrams to be equal.  In other words, $G$-partition diagrams represent equivalence classes of $G$-partitions.  We will also use the terms \emph{part} (of a partition) and \emph{(connected) component} (of the corresponding partition diagram) interchangeably.
\end{conv}

Recall that $\kk$ is a commutative ring, and fix $d \in \kk$.

\begin{defin} \label{GPCbasis}
    The \emph{$G$-partition category} $\Par(G,d)$ is the strict $\kk$-linear monoidal category whose objects are nonnegative integers and, given two objects $k,l$ in $\Par(G,d)$, the morphisms from $k$ to $l$ are $\kk$-linear combinations of equivalence classes of $G$-partitions of type $\binom{l}{k}$.  The vertical composition is given by defining
    \[
        [Q,\bh] \circ [P,\bg] = d^{\alpha(Q,P)} [Q \star P, \bh \star_{Q,P} \bg]
    \]
    if $([Q,\bh], [P,\bg])$ is compatible, defining $[Q,\bh] \circ [P,\bg] = 0$ otherwise, and then extending by linearity.  The tensor product is given on objects by $k \otimes l := k+l$, and on morphisms by horizontal juxtaposition of $G$-partition diagrams, extended by linearity.  When we do not wish to make the group $G$ explicit, we call $\Par(G,d)$ a \emph{group partition category}.
\end{defin}

For example, if the pair $((Q,\bh),(P,\bg))$ of \cref{chipper} is compatible, then
\[
    [Q,\bh] \circ [P,\bg] = d^2\
    \begin{tikzpicture}[anchorbase]
        \pd{1,0} node[anchor=north] {\dotlabel{g_4}};
        \pd{2,0} node[anchor=north] {\dotlabel{g_3}};
        \pd{3,0} node[anchor=north] {\dotlabel{g_2}};
        \pd{4,0} node[anchor=north] {\dotlabel{g_1}};
        \pd{0,1} node[anchor=east] {\dotlabel{g_{7'} h_7^{-1} h_{5'}}};
        \pd{1,1} node[anchor=south] {\dotlabel{g_{7'} h_7^{-1} h_{4'}}};
        \pd{2,1} node[anchor=south] {\dotlabel{h_{3'}}};
        \pd{3,1} node[anchor=south] {\dotlabel{h_{2'}}};
        \pd{4,1} node[anchor=west] {\dotlabel{g_{1'} h_1^{-1} h_{1'}}};
        \draw (1,0) -- (1,1) -- (0,1);
        \draw (3,0) -- (1,1);
        \draw (2,0) -- (4,1);
        \draw (2,1) -- (3,1);
        \draw (4,0) -- (4,1);
    \end{tikzpicture}
    \ .
\]
It is straightforward to verify that $\Par(G,d)$ is, in fact, a category, i.e.\ the composition of morphisms is associative.

The category $\Par(\{1\},d)$ is the \emph{partition category}; see \cite[\S 2]{Com16}.  In fact, we have a faithful functor
\begin{equation} \label{snow}
    \Par(\{1\},d) \to \Par(G,d)
\end{equation}
sending any partition diagram $P$ to the corresponding $G$-partition diagram where all vertices are labelled by $1_G$.  (More generally, we have a faithful functor $\Par(H,d) \to \Par(G,d)$ for any subgroup $H$ of $G$.)  In what follows, we will identify a partition diagram $P$ with its image under this functor.  In other words, we write $P$ and $[P]$ for $(P,\mathbf{1})$ and $[P,\mathbf{1}]$, respectively, where $\mathbf{1}$ is the identity element of $G^{\PS_k^l}$.  An arbitrary equivalence class of $G$-partitions $[P,\bg] \colon k \to l$ can be written in the form
\begin{equation} \label{sea}
    [P,\bg] =
    \begin{tikzpicture}[centerzero]
        \pd{-0.4,-0.2};
        \pd{0.4,-0.2};
        \pd{0.8,-0.2};
        \pd{-0.4,0.2} node[anchor=south] {\dotlabel{g_{l'}}};
        \pd{0.4,0.2} node[anchor=south] {\dotlabel{g_{2'}}};
        \pd{0.8,0.2} node[anchor=south] {\dotlabel{g_{1'}}};
        \draw (-0.4,-0.2) -- (-0.4,0.2);
        \draw (0.4,-0.2) -- (0.4,0.2);
        \draw (0.8,-0.2) -- (0.8,0.2);
        \node at (0.04,0) {$\cdots$};
    \end{tikzpicture}
    \circ
    [P]
    \circ
    \begin{tikzpicture}[centerzero]
        \pd{-0.4,-0.2} node[anchor=north] {\dotlabel{g_k}};
        \pd{0.4,-0.2} node[anchor=north] {\dotlabel{g_2}};
        \pd{0.8,-0.2} node[anchor=north] {\dotlabel{g_1}};
        \pd{-0.4,0.2};
        \pd{0.4,0.2};
        \pd{0.8,0.2};
        \draw (-0.4,-0.2) -- (-0.4,0.2);
        \draw (0.4,-0.2) -- (0.4,0.2);
        \draw (0.8,-0.2) -- (0.8,0.2);
        \node at (0.04,0) {$\cdots$};
    \end{tikzpicture}
    \ ,
\end{equation}
where we adopt the convention that unlabelled vertices implicitly carry the label $1_G$.

\section{Presentation\label{sec:present}}

In this section we give a presentation of group partition categories by generators and relations.  We use the usual string diagram calculus for monoidal categories.  We denote the unit object of a monoidal category by $\one$ and the identity morphism of an object $X$ by $1_X$.

\begin{defin} \label{GPC}
    Let $\Par(G)$ be the strict $\kk$-linear monoidal category with one generating object $\go$, where we denote
    \[
        \begin{tikzpicture}[anchorbase]
          \draw (0,-0.25) -- (0,0.25);
        \end{tikzpicture}
        \ := 1_\go,
    \]
    and generating morphisms
    \begin{gather*}
        \merge \colon \go \otimes \go \to \go, \quad
        \spliter \colon \go \to \go \otimes \go, \quad
        \crossing \colon \go \otimes \go \to \go \otimes \go,
        \\
        \bottompin \colon \one \to \go, \quad
        \toppin \colon \go \to \one, \quad
        \tokstrand \colon \go \to \go,\ g \in G,
    \end{gather*}
    subject to the following relations:
    \begin{gather} \label{GPC1}
        \begin{tikzpicture}[anchorbase]
            \draw (-0.4,-0.4) -- (0,0);
            \draw (0.25,-0.25) -- (0,0);
            \draw (0,0) -- (0,0.5);
            \opendot{0.25,-0.25};
        \end{tikzpicture}
        =
        \begin{tikzpicture}[anchorbase]
            \draw (0,-0.45) -- (0,0.45);
        \end{tikzpicture}
        \ ,\quad
        \begin{tikzpicture}[anchorbase]
            \draw (-0.4,0.4) -- (0,0);
            \draw (0.25,0.25) -- (0,0);
            \draw (0,0) -- (0,-0.5);
            \opendot{0.25,0.25};
        \end{tikzpicture}
        =
        \begin{tikzpicture}[anchorbase]
            \draw (0,0.45) -- (0,-0.45);
        \end{tikzpicture}
        =
        \begin{tikzpicture}[anchorbase]
            \draw (-0.25,0.25) -- (0,0);
            \draw (0.4,0.4) -- (0,0);
            \draw (0,0) -- (0,-0.5);
            \opendot{-0.25,0.25};
        \end{tikzpicture}
        \ ,\quad
        \begin{tikzpicture}[anchorbase]
            \draw (-0.4,-0.4) -- (-0.4,0) -- (-0.2,0.2) -- (0.2,-0.2) -- (0.4,0) -- (0.4,0.4);
            \draw (-0.2,0.2) -- (-0.2,0.4);
            \draw (0.2,-0.2) -- (0.2,-0.4);
        \end{tikzpicture}
        =
        \begin{tikzpicture}[anchorbase]
            \draw (-0.2,-0.4) -- (0,-0.2) -- (0.2,-0.4);
            \draw (-0.2,0.4) -- (0,0.2) -- (0.2,0.4);
            \draw (0,-0.2) -- (0,0.2);
        \end{tikzpicture}
        =
        \begin{tikzpicture}[anchorbase]
            \draw (0.4,-0.4) -- (0.4,0) -- (0.2,0.2) -- (-0.2,-0.2) -- (-0.4,0) -- (-0.4,0.4);
            \draw (0.2,0.2) -- (0.2,0.4);
            \draw (-0.2,-0.2) -- (-0.2,-0.4);
        \end{tikzpicture}
        \ ,
        \\ \label{GPC2}
        \begin{tikzpicture}[anchorbase]
            \draw (-0.2,-0.4) \braidto (0.2,0) \braidto (-0.2,0.4);
            \draw (0.2,-0.4) \braidto (-0.2,0) \braidto (0.2,0.4);
        \end{tikzpicture}
        =
        \begin{tikzpicture}[anchorbase]
            \draw (-0.2,-0.4) -- (-0.2,0.4);
            \draw (0.2,-0.4) -- (0.2,0.4);
        \end{tikzpicture}
        \ ,\quad
        \begin{tikzpicture}[anchorbase]
            \draw (-0.4,-0.4) -- (0.4,0.4);
            \draw (0.4,-0.4) -- (-0.4,0.4);
            \draw (0,-0.4) \braidto (-0.4,0) \braidto (0,0.4);
        \end{tikzpicture}
        =
        \begin{tikzpicture}[anchorbase]
            \draw (-0.4,-0.4) -- (0.4,0.4);
            \draw (0.4,-0.4) -- (-0.4,0.4);
            \draw (0,-0.4) \braidto (0.4,0) \braidto (0,0.4);
        \end{tikzpicture}
        \ ,
        \\ \label{GPC3}
        \begin{tikzpicture}[anchorbase]
            \draw (0.2,-0.2) -- (-0.3,0.3);
            \draw (-0.3,-0.3) -- (0.3,0.3);
            \opendot{0.2,-0.2};
        \end{tikzpicture}
        =
        \begin{tikzpicture}[anchorbase]
            \draw (-0.2,0) to (-0.2,0.3);
            \draw (0.2,-0.3) to (0.2,0.3);
            \opendot{-0.2,0};
        \end{tikzpicture}
        \ ,\quad
        \begin{tikzpicture}[anchorbase]
            \draw (0.2,0.2) -- (-0.3,-0.3);
            \draw (-0.3,0.3) -- (0.3,-0.3);
            \opendot{0.2,0.2};
        \end{tikzpicture}
        =
        \begin{tikzpicture}[anchorbase]
            \draw (-0.2,0) to (-0.2,-0.3);
            \draw (0.2,0.3) to (0.2,-0.3);
            \opendot{-0.2,0};
        \end{tikzpicture}
        \ ,\quad
        \begin{tikzpicture}[anchorbase]
            \draw (-0.4,-0.5) -- (0.2,0.3) -- (0.4,0.1) -- (0,-0.5);
            \draw (0.2,0.3) -- (0.2,0.6);
            \draw (-0.4,0.6) -- (-0.4,0.1) -- (0.4,-0.5);
        \end{tikzpicture}
        =
        \begin{tikzpicture}[anchorbase]
            \draw (-0.4,-0.4) -- (-0.2,-0.2) -- (-0.2,0) -- (0.2,0.4);
            \draw (0,-0.4) -- (-0.2,-0.2);
            \draw (0.2,-0.4) -- (0.2,0) -- (-0.2,0.4);
        \end{tikzpicture}
        \ ,\quad
        \begin{tikzpicture}[anchorbase]
            \draw (-0.4,0.5) -- (0.2,-0.3) -- (0.4,-0.1) -- (0,0.5);
            \draw (0.2,-0.3) -- (0.2,-0.6);
            \draw (-0.4,-0.6) -- (-0.4,-0.1) -- (0.4,0.5);
        \end{tikzpicture}
        =
        \begin{tikzpicture}[anchorbase]
            \draw (-0.4,0.4) -- (-0.2,0.2) -- (-0.2,0) -- (0.2,-0.4);
            \draw (0,0.4) -- (-0.2,0.2);
            \draw (0.2,0.4) -- (0.2,0) -- (-0.2,-0.4);
        \end{tikzpicture}
        \ ,
        \\ \label{GPC4}
        \begin{tikzpicture}[anchorbase]
          \draw (-0.2,-0.4) to[out=45,in=down] (0.2,0) to[out=up,in=-45] (0,0.2) -- (0,0.5);
          \draw (0.2,-0.4) to[out=135,in=down] (-0.2,0) to[out=up,in=225] (0,0.2);
        \end{tikzpicture}
        =
        \merge
        \ ,\quad
        \begin{tikzpicture}[anchorbase]
            \draw (0,-0.5) -- (0,-0.3) to[out=135,in=down] (-0.2,-0.1) -- (-0.2,0.1) to[out=up,in=225] (0,0.3) -- (0,0.5);
            \draw (0,-0.3) to[out=45,in=down] (0.2,-0.1) -- (0.2,0.1) to[out=up,in=-45] (0,0.3);
            \token{east}{-0.2,0}{g};
            \token{west}{0.2,0}{h};
        \end{tikzpicture}
        = \delta_{g,h}\
        \begin{tikzpicture}[anchorbase]
            \draw (0,-0.4) -- (0,0.4);
            \token{west}{0,0}{g};
        \end{tikzpicture}
        \ ,
        \\ \label{GPC5}
        \begin{tikzpicture}[anchorbase]
            \draw (0,-0.3) -- (0,0.3);
            \token{east}{0,-0.15}{h};
            \token{east}{0,0.15}{g};
        \end{tikzpicture}
        =
        \begin{tikzpicture}[anchorbase]
            \draw (0,-0.3) -- (0,0.3);
            \token{east}{0,0}{gh};
        \end{tikzpicture}
        \ ,\quad
        \begin{tikzpicture}[anchorbase]
            \draw (0,-0.3) -- (0,0.3);
            \token{east}{0,0}{1};
        \end{tikzpicture}
        =
        \begin{tikzpicture}[anchorbase]
            \draw (0,-0.3) -- (0,0.3);
        \end{tikzpicture}
        \ ,\quad
        \begin{tikzpicture}[anchorbase]
            \draw (-0.3,-0.3) -- (0.3,0.3);
            \draw (0.3,-0.3) -- (-0.3,0.3);
            \token{east}{-0.15,-0.15}{g};
        \end{tikzpicture}
        =
        \begin{tikzpicture}[anchorbase]
            \draw (-0.3,-0.3) -- (0.3,0.3);
            \draw (0.3,-0.3) -- (-0.3,0.3);
            \token{west}{0.15,0.15}{g};
        \end{tikzpicture}
        \ ,\quad
        \begin{tikzpicture}[anchorbase]
            \draw (0,-0.3) -- (0,0) -- (0.25,0.25);
            \draw (0,0) -- (-0.25,0.25);
            \token{east}{0,-0.15}{g};
        \end{tikzpicture}
        =
        \begin{tikzpicture}[anchorbase]
            \draw (0,-0.3) -- (0,0) -- (0.25,0.25);
            \draw (0,0) -- (-0.25,0.25);
            \token{east}{-0.125,0.125}{g};
            \token{west}{0.125,0.125}{g};
        \end{tikzpicture}
        \ ,\quad
        \begin{tikzpicture}[anchorbase]
            \draw (0,-0.3) -- (0,0.3);
            \opendot{0,-0.3};
            \token{east}{0,0}{g};
        \end{tikzpicture}
        =
        \bottompin
        \ .
    \end{gather}
\end{defin}

\begin{rem} \label{FrobObj}
    The relations \cref{GPC1} are equivalent to the statement that $(\go,\merge,\bottompin,\spliter,\toppin)$ is a Frobenius object (see, for example, \cite[Prop.~2.3.24]{Koc04}).  Relations \cref{GPC2,GPC3} and the third relation in \cref{GPC5} are precisely the statement that $\crossing$ equips $\Par(G)$ with the structure of a symmetric monoidal category (see, for example, \cite[\S1.3.27, \S1.4.35]{Koc04}).  Then the first relation in \cref{GPC4} is the statement that the Frobenius object $\go$ is commutative.  When $g=h=1_G$, the second relation in \cref{GPC4} is the statement that the Frobenius object $\go$ is special.
\end{rem}

We refer to the morphisms $\tokstrand$ as \emph{tokens}, and the open dots in $\toppin$ and $\bottompin$ as \emph{pins}.  We call $\merge$ a \emph{merge}, $\spliter$ a \emph{split}, and $\crossing$ a \emph{crossing}.  Define \emph{cups} and \emph{caps} by
\[
    \begin{tikzpicture}[anchorbase]
        \draw (-0.2,0.1) -- (-0.2,0) arc(180:360:0.2) -- (0.2,0.1);
    \end{tikzpicture}
    :=
    \begin{tikzpicture}[anchorbase]
        \draw (-0.2,0.2) -- (0,0) -- (0.2,0.2);
        \draw (0,0) -- (0,-0.2);
        \opendot{0,-0.2};
    \end{tikzpicture}
    \quad \text{and} \quad
    \begin{tikzpicture}[anchorbase]
        \draw (-0.2,-0.1) -- (-0.2,0) arc(180:0:0.2) -- (0.2,-0.1);
    \end{tikzpicture}
    :=
    \begin{tikzpicture}[anchorbase]
        \draw (-0.2,-0.2) -- (0,0) -- (0.2,-0.2);
        \draw (0,0) -- (0,0.2);
        \opendot{0,0.2};
    \end{tikzpicture}
    \ .
\]

\begin{prop} \label{Sydney}
    The following relations hold in $\Par(G)$:
    \begin{gather} \label{mirror}
        \begin{tikzpicture}[anchorbase]
            \draw (-0.2,-0.2) -- (0.3,0.3);
            \draw (0.3,-0.3) -- (-0.3,0.3);
            \opendot{-0.2,-0.2};
        \end{tikzpicture}
        =
        \begin{tikzpicture}[anchorbase]
            \draw (0.2,0) to (0.2,0.3);
            \draw (-0.2,-0.3) to (-0.2,0.3);
            \opendot{0.2,0};
        \end{tikzpicture}
        \ ,\quad
        \begin{tikzpicture}[anchorbase]
            \draw (-0.2,0.2) -- (0.3,-0.3);
            \draw (0.3,0.3) -- (-0.3,-0.3);
            \opendot{-0.2,0.2};
        \end{tikzpicture}
        =
        \begin{tikzpicture}[anchorbase]
            \draw (0.2,0) to (0.2,-0.3);
            \draw (-0.2,0.3) to (-0.2,-0.3);
            \opendot{0.2,0};
        \end{tikzpicture}
        \ ,\quad
        \begin{tikzpicture}[anchorbase]
            \draw (-0.25,-0.25) -- (0,0);
            \draw (0.4,-0.4) -- (0,0);
            \draw (0,0) -- (0,0.5);
            \opendot{-0.25,-0.25};
        \end{tikzpicture}
        =
        \begin{tikzpicture}[anchorbase]
            \draw (0,-0.45) -- (0,0.45);
        \end{tikzpicture}
        \ ,\quad
        \begin{tikzpicture}[anchorbase]
            \draw (-0.2,0.4) to[out=-45,in=up] (0.2,0) to[out=down,in=45] (0,-0.2) -- (0,-0.5);
            \draw (0.2,0.4) to[out=-135,in=up] (-0.2,0) to[out=down,in=135] (0,-0.2);
        \end{tikzpicture}
        =
        \spliter
        \ ,
        \\ \label{adjunction}
        \begin{tikzpicture}[anchorbase]
            \draw (-0.3,-0.5) to (-0.3,0) to[out=up,in=up,looseness=2] (0,0) to[out=down,in=down,looseness=2] (0.3,0) to (0.3,0.5);
        \end{tikzpicture}
        \ =\
        \begin{tikzpicture}[anchorbase]
            \draw (0,-0.5) to (0,0.5);
        \end{tikzpicture}
        \ =\
        \begin{tikzpicture}[anchorbase]
            \draw (-0.3,0.5) to (-0.3,0) to[out=down,in=down,looseness=2] (0,0) to[out=up,in=up,looseness=2] (0.3,0) to (0.3,-0.5);
        \end{tikzpicture}
        \ ,
        \\ \label{vortex1}
        \begin{tikzpicture}[anchorbase]
            \draw (-0.4,0.2) to[out=down,in=180] (-0.2,-0.2) to[out=0,in=225] (0,0);
            \draw (0,0) -- (0,0.2);
            \draw (0.3,-0.3) -- (0,0);
        \end{tikzpicture}
        =
        \spliter
        =
        \begin{tikzpicture}[anchorbase]
            \draw (0.4,0.2) to[out=down,in=0] (0.2,-0.2) to[out=180,in=-45] (0,0);
            \draw (0,0) -- (0,0.2);
            \draw (-0.3,-0.3) -- (0,0);
        \end{tikzpicture}
        \ ,\quad
        \begin{tikzpicture}[anchorbase]
            \draw (-0.4,-0.2) to[out=up,in=180] (-0.2,0.2) to[out=0,in=135] (0,0);
            \draw (0,0) -- (0,-0.2);
            \draw (0.3,0.3) -- (0,0);
        \end{tikzpicture}
        =
        \merge
        =
        \begin{tikzpicture}[anchorbase]
            \draw (0.4,-0.2) to[out=up,in=0] (0.2,0.2) to[out=180,in=45] (0,0);
            \draw (0,0) -- (0,-0.2);
            \draw (-0.3,0.3) -- (0,0);
        \end{tikzpicture}
        \ ,
        \\ \label{vortex2}
        \begin{tikzpicture}[anchorbase]
            \draw (-0.2,0.2) -- (0.2,-0.2);
            \draw (-0.4,0.2) to[out=down,in=225,looseness=2] (0,0) to[out=45,in=up,looseness=2] (0.4,-0.2);
        \end{tikzpicture}
        =
        \crossing
        =
        \begin{tikzpicture}[anchorbase]
            \draw (0.2,0.2) -- (-0.2,-0.2);
            \draw (0.4,0.2) to[out=down,in=-45,looseness=2] (0,0) to[out=135,in=up,looseness=2] (-0.4,-0.2);
        \end{tikzpicture}
        \ ,\quad
        \begin{tikzpicture}[anchorbase]
          \draw (0,0) arc(180:0:0.2) -- (0.4,-0.2);
          \opendot{0,0};
        \end{tikzpicture}
        =
        \toppin
        =
        \begin{tikzpicture}[anchorbase]
            \draw (0,0) arc(0:180:0.2) -- (-0.4,-0.2);
            \opendot{0,0};
        \end{tikzpicture}
        \ ,\quad
        \begin{tikzpicture}[anchorbase]
            \draw (0,0) arc(180:360:0.2) -- (0.4,0.2);
            \opendot{0,0};
        \end{tikzpicture}
        =
        \bottompin
        =
        \begin{tikzpicture}[anchorbase]
            \draw (0,0) arc(360:180:0.2) -- (-0.4,0.2);
            \opendot{0,0};
        \end{tikzpicture}
        \ ,
        \\ \label{ramp}
        \begin{tikzpicture}[anchorbase]
            \draw (-0.2,0.2) -- (-0.2,0) arc(180:360:0.2) -- (0.2,0.2);
            \token{east}{-0.2,0}{g};
        \end{tikzpicture}
        =
        \begin{tikzpicture}[anchorbase]
            \draw (-0.2,0.2) -- (-0.2,0) arc(180:360:0.2) -- (0.2,0.2);
            \token{west}{0.2,0}{g^{-1}};
        \end{tikzpicture}
        ,\quad
        \begin{tikzpicture}[anchorbase]
            \draw (-0.2,-0.2) -- (-0.2,0) arc(180:0:0.2) -- (0.2,-0.2);
            \token{east}{-0.2,0}{g};
        \end{tikzpicture}
        =
        \begin{tikzpicture}[anchorbase]
            \draw (-0.2,-0.2) -- (-0.2,0) arc(180:0:0.2) -- (0.2,-0.2);
            \token{west}{0.2,0}{g^{-1}};
        \end{tikzpicture}
        \ ,\quad
        \begin{tikzpicture}[anchorbase]
            \draw (-0.3,-0.3) -- (0.3,0.3);
            \draw (0.3,-0.3) -- (-0.3,0.3);
            \token{west}{0.15,-0.15}{g};
        \end{tikzpicture}
        =
        \begin{tikzpicture}[anchorbase]
            \draw (-0.3,-0.3) -- (0.3,0.3);
            \draw (0.3,-0.3) -- (-0.3,0.3);
            \token{east}{-0.15,0.15}{g};
        \end{tikzpicture}
        \ ,\quad
        \begin{tikzpicture}[anchorbase]
            \draw (0,-0.3) -- (0,0.3);
            \opendot{0,0.3};
            \token{east}{0,0}{g};
        \end{tikzpicture}
        =
        \toppin\ ,
        \\ \label{pool}
        \begin{tikzpicture}[anchorbase]
            \draw (0,-0.3) -- (0,0) -- (0.25,0.25);
            \draw (0,0) -- (-0.25,0.25);
            \token{east}{-0.125,0.125}{g};
        \end{tikzpicture}
        =
        \begin{tikzpicture}[anchorbase]
            \draw (0,-0.3) -- (0,0) -- (0.25,0.25);
            \draw (0,0) -- (-0.25,0.25);
            \token{east}{0,-0.15}{g};
            \token{west}{0.125,0.125}{g^{-1}};
        \end{tikzpicture}
        ,\
        \begin{tikzpicture}[anchorbase]
            \draw (0,-0.3) -- (0,0) -- (0.25,0.25);
            \draw (0,0) -- (-0.25,0.25);
            \token{west}{0.125,0.125}{g};
        \end{tikzpicture}
        =
        \begin{tikzpicture}[anchorbase]
            \draw (0,-0.3) -- (0,0) -- (0.25,0.25);
            \draw (0,0) -- (-0.25,0.25);
            \token{east}{0,-0.15}{g};
            \token{east}{-0.125,0.125}{g^{-1}};
        \end{tikzpicture}
        ,\
        \begin{tikzpicture}[anchorbase]
            \draw (0,0.3) -- (0,0) -- (0.25,-0.25);
            \draw (0,0) -- (-0.25,-0.25);
            \token{east}{0,0.15}{g};
        \end{tikzpicture}
        =
        \begin{tikzpicture}[anchorbase]
            \draw (0,0.3) -- (0,0) -- (0.25,-0.25);
            \draw (0,0) -- (-0.25,-0.25);
            \token{east}{-0.125,-0.125}{g};
            \token{west}{0.125,-0.125}{g};
        \end{tikzpicture}
        ,\
        \begin{tikzpicture}[anchorbase]
            \draw (0,0.3) -- (0,0) -- (0.25,-0.25);
            \draw (0,0) -- (-0.25,-0.25);
            \token{east}{-0.125,-0.125}{g};
        \end{tikzpicture}
        =
        \begin{tikzpicture}[anchorbase]
            \draw (0,0.3) -- (0,0) -- (0.25,-0.25);
            \draw (0,0) -- (-0.25,-0.25);
            \token{east}{0,0.15}{g};
            \token{west}{0.125,-0.125}{g^{-1}};
        \end{tikzpicture}
        ,\
        \begin{tikzpicture}[anchorbase]
            \draw (0,0.3) -- (0,0) -- (0.25,-0.25);
            \draw (0,0) -- (-0.25,-0.25);
            \token{west}{0.125,-0.125}{g};
        \end{tikzpicture}
        =
        \begin{tikzpicture}[anchorbase]
            \draw (0,0.3) -- (0,0) -- (0.25,-0.25);
            \draw (0,0) -- (-0.25,-0.25);
            \token{east}{0,0.15}{g};
            \token{east}{-0.125,-0.125}{g^{-1}};
        \end{tikzpicture}
        \ .
    \end{gather}
\end{prop}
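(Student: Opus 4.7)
The statement collects derived identities in $\Par(G)$ that fall into two families: the token-free identities \cref{mirror,adjunction,vortex1,vortex2} and the token-carrying identities \cref{ramp,pool}. My plan is to handle these in stages, starting with the token-free relations and then leveraging them for the token relations.

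Stage one is the token-free relations. By \cref{FrobObj}, the data $(\go, \merge, \bottompin, \spliter, \toppin, \crossing)$ equip $\go$ with the structure of a commutative (hence cocommutative) Frobenius object in a symmetric monoidal category, and all of the token-free identities are standard consequences; I would cite \cite[Ch.~2--3]{Koc04} for the general framework and fill in explicit string-diagram derivations. Concretely, the adjunction \cref{adjunction} follows by expanding the cup as $\spliter \circ \bottompin$ and the cap as $\toppin \circ \merge$, applying the Frobenius equation (third relation of \cref{GPC1}) to straighten the zigzag, and finishing with the counit/unit relations (first two relations of \cref{GPC1}). The first two ``pin through crossing'' identities of \cref{mirror} are exactly the content of \cref{GPC3}. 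The third identity of \cref{mirror} uses commutativity of the merge (first relation of \cref{GPC4}), and the fourth uses cocommutativity of the split, which one derives from commutativity plus self-duality (itself a consequence of \cref{adjunction}, just established). The vortex identities \cref{vortex1,vortex2} follow by expanding cups and caps, then applying the Frobenius equation together with the associativity/coassociativity and unit/counit pieces of \cref{GPC1}.

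Stage two is the token relations. Each identity in \cref{pool} asserts that a token on one strand of a split or merge can be moved to the other two strands, with $g \mapsto g^{-1}$ when it crosses over to the ``opposite'' side. These follow from the fourth relation of \cref{GPC5} (a $g$-token on the input of a split equals matching $g$-tokens on both outputs), combined with the token composition and unit rules (first two relations of \cref{GPC5}) applied to $g\cdot g^{-1} = 1_G$. The analogous statements for merges follow by rotating the split versions through the adjunction \cref{adjunction} established in stage one. For \cref{ramp}, the cup/cap identities (first two relations) follow by unfolding the cup as $\spliter \circ \bottompin$, using the just-proved rule from \cref{pool} to move a left-strand $g$-token to the bottom as $g$ and to the right strand as $g^{-1}$, and then absorbing the $g$-token meeting the bottom pin via the last relation of \cref{GPC5}. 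The third relation of \cref{ramp} is just \cref{GPC5} third relation rewritten for a token on the opposite diagonal of a crossing, using the invertibility of the crossing from \cref{GPC2}. The fourth relation of \cref{ramp} follows by rewriting $\toppin$ via \cref{vortex2} as a cap with a bottom pin on its left strand, pushing the $g$-token across the cap by the cap case of \cref{ramp} just proved so that it becomes a $g^{-1}$-token on the strand heading into the bottom pin, and then killing that token by the last relation of \cref{GPC5}.

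The main obstacle is not technical depth but bookkeeping: one must order the derivations so that each step only uses previously established identities. Specifically, the adjunction and cocommutativity of the split must be in hand before they are used on token-carrying diagrams; the \cref{pool} relations must be proved before they are invoked in \cref{ramp}; and consistent $g \mapsto g^{-1}$ tracking is required throughout, driven always by the basic token composition law from \cref{GPC5}.
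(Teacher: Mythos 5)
Your overall strategy is the same as the paper's -- derive the token-free identities from the commutative Frobenius / symmetric monoidal axioms, then propagate tokens using the fourth relation of \cref{GPC5} -- and most of your individual steps match the paper's computations (e.g.\ your derivation of the cup identity in \cref{ramp} by unfolding the cup as $\spliter$ composed with $\bottompin$ and absorbing the token into the pin is exactly the paper's calculation).  One genuine improvement in your route: the split-leg identities in \cref{pool} really do follow directly from the fourth relation of \cref{GPC5} together with the token composition law, via $(1 \otimes g^{-1})\circ(g\otimes g)\circ\spliter = (g\otimes 1)\circ\spliter$, with no cups or caps needed; the paper instead derives all of \cref{pool} at the end using \cref{ramp}.

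However, your stated ordering -- all of \cref{pool} before any of \cref{ramp} -- is circular at the merge half of \cref{pool}.  You propose to obtain the merge-leg identities ``by rotating the split versions through the adjunction,'' but rotating a token-carrying diagram through $180\degree$ requires knowing how a token behaves when it is dragged across a cup or a cap, and that is precisely the content of the first two relations of \cref{ramp}.  (There is no merge analogue of the fourth relation of \cref{GPC5} among the axioms, so rotation is genuinely needed here.)  The repair is to interleave: prove the split half of \cref{pool} directly, deduce the cup identity of \cref{ramp} from it as you describe, obtain the cap identity by the adjunction snake trick (as in the paper), and only then rotate to get the merge half of \cref{pool}.  Two smaller imprecisions of the same flavour: the first two relations of \cref{mirror} are not ``exactly the content of \cref{GPC3}'' but its mirror images, and require composing with a crossing and invoking \cref{GPC2}; and cocommutativity of the split (fourth relation of \cref{mirror}), if obtained by rotating commutativity of the merge, needs the crossing--cup compatibility in \cref{vortex2}, which in turn uses the third relation of \cref{GPC3} rather than only the pieces of \cref{GPC1} you cite.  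None of these is fatal, but the dependency order you announce as the ``main obstacle'' is exactly where your plan, as written, breaks.
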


\begin{proof}
    The first two relations in \cref{mirror} follow from the first two relations in \cref{GPC3} by composing on the top and bottom, respectively, with the crossing and using \cref{GPC2}.  Then the third relation in \cref{mirror} follows from the first relation in \cref{GPC1} using the first relation in \cref{GPC4} and the first relation in \cref{mirror}.  (The fourth relation in \cref{mirror} will be proved below.)

    The relations \cref{adjunction} follow from the fourth and fifth equalities in \cref{GPC1} after placing pins on the merges and splits.  The first equality in \cref{vortex1} follows from the fifth equality in \cref{GPC1} after placing a pin on the bottom-left of both diagrams involved in the equality.  The remaining equalities in \cref{vortex1} are proved similarly.

    Starting with the third relation in \cref{GPC3}, adding a pin to the top-right strand, a crossing to the two rightmost strands at the bottom, and using the second relation in \cref{GPC3} and the first relation in \cref{GPC2}, we obtain the relation
    \[
        \begin{tikzpicture}[anchorbase]
          \draw (-0.2,-0.1) -- (-0.2,0) arc(180:0:0.2) -- (0.2,-0.1);
          \draw (0,-0.1) -- (-0.3,0.3);
        \end{tikzpicture}
        =
        \begin{tikzpicture}[anchorbase]
          \draw (-0.2,-0.1) -- (-0.2,0) arc(180:0:0.2) -- (0.2,-0.1);
          \draw (0,-0.1) -- (0.3,0.3);
        \end{tikzpicture}
        \ .
    \]
    Adding a strand on the left (i.e.\ tensoring on the left with $1_\go$), then adding a cup to the two leftmost bottom strands and using \cref{adjunction}, yields the first equality in \cref{vortex2}.  The second equality in \cref{vortex2} is proved similarly.  The remaining relations in \cref{vortex2} follow from placing pins at the top of the morphisms in first relation in \cref{GPC1} and the third relation in \cref{mirror} and from placing pins at the bottom of the morphisms in the second two equalities in \cref{GPC1}.

    Now the fourth relation in \cref{mirror} follows from rotating the first relation in \cref{GPC4} using the cups and caps, together with \cref{adjunction,vortex1,vortex2}.

    To prove the first relation in \cref{ramp}, we compute
    \[
        \begin{tikzpicture}[anchorbase]
            \draw (-0.2,0.2) -- (-0.2,0) arc(180:360:0.2) -- (0.2,0.2);
            \token{east}{-0.2,0}{g};
        \end{tikzpicture}
        =
        \begin{tikzpicture}[anchorbase]
            \draw (-0.4,0.4) -- (0,0) -- (0.4,0.4);
            \draw (0,0) -- (0,-0.3);
            \opendot{0,-0.3};
            \token{east}{-0.2,0.2}{g};
        \end{tikzpicture}
        =
        \begin{tikzpicture}[anchorbase]
            \draw (-0.4,0.4) -- (0,0) -- (0.4,0.4);
            \draw (0,0) -- (0,-0.3);
            \opendot{0,-0.3};
            \token{east}{-0.2,0.2}{g};
            \token{north}{0.14,0.14}{g};
            \token{west}{0.27,0.27}{g^{-1}};
        \end{tikzpicture}
        =
        \begin{tikzpicture}[anchorbase]
            \draw (-0.4,0.4) -- (0,0) -- (0.4,0.4);
            \draw (0,0) -- (0,-0.3);
            \opendot{0,-0.3};
            \token{west}{0.2,0.2}{g^{-1}};
            \token{east}{0,-0.1}{g};
        \end{tikzpicture}
        =
        \begin{tikzpicture}[anchorbase]
            \draw (-0.4,0.4) -- (0,0) -- (0.4,0.4);
            \draw (0,0) -- (0,-0.3);
            \opendot{0,-0.3};
            \token{west}{0.2,0.2}{g^{-1}};
        \end{tikzpicture}
        =
        \begin{tikzpicture}[anchorbase]
            \draw (-0.2,0.2) -- (-0.2,0) arc(180:360:0.2) -- (0.2,0.2);
            \token{west}{0.2,0}{g^{-1}};
        \end{tikzpicture}
        \ .
    \]
    Then we prove the second relation in \cref{ramp} as follows:
    \[
        \begin{tikzpicture}[anchorbase]
            \draw (-0.2,-0.2) -- (-0.2,0) arc(180:0:0.2) -- (0.2,-0.2);
            \token{east}{-0.2,0}{g};
        \end{tikzpicture}
        \overset{\cref{adjunction}}{=}
        \begin{tikzpicture}[centerzero={(0,-0.1)}]
            \draw (-0.6,-0.4) -- (-0.6,0) arc(180:0:0.2) arc(180:360:0.2) arc(180:0:0.2) -- (0.6,-0.4);
            \token{west}{0.2,0}{g};
        \end{tikzpicture}
        =
        \begin{tikzpicture}[centerzero={(0,-0.1)}]
            \draw (-0.6,-0.4) -- (-0.6,0) arc(180:0:0.2) arc(180:360:0.2) arc(180:0:0.2) -- (0.6,-0.4);
            \token{south west}{-0.2,0}{g^{-1}};
        \end{tikzpicture}
        \overset{\cref{adjunction}}{=}
        \begin{tikzpicture}[anchorbase]
            \draw (-0.2,-0.2) -- (-0.2,0) arc(180:0:0.2) -- (0.2,-0.2);
            \token{west}{0.2,0}{g^{-1}};
        \end{tikzpicture}
        \ .
    \]
    The third relation in \cref{ramp} is obtained from the third relation in \cref{GPC5} by composing on the top and bottom with a crossing and then using \cref{GPC2}.  The fourth relation in \cref{ramp} is obtained by starting with the third equality in \cref{vortex2}, attaching a token labelled $g$ to the bottom of the strands, then using the second relation in \cref{ramp} and the fifth relation in \cref{GPC5}.

    Finally, the relations in \cref{pool} are obtained from the fourth relation in \cref{GPC5} by attaching the appropriate cups and caps, then using \cref{vortex1,ramp}.
\end{proof}

The relation \cref{adjunction} implies that the object $\go$ is self-dual.   It follows from \cref{Sydney} that the cups and caps endow $\Par(G)$ with the structure of a strict pivotal category: we have an isomorphism of strict monoidal categories
\[
    \ast \colon \Par(G) \to \left( \Par(G)^\op \right)^\rev,
\]
where $\op$ denotes the opposite category and $\rev$ denotes the reversed category (switching the order of the tensor product).  This isomorphism is the identity on objects and, for a general morphism $f$ represented by a single string diagram, the morphism $f^*$ is given by rotating the diagram through $180\degree$.

Moreover, we have that morphisms are invariant under isotopy, except that we must use \cref{ramp} when we slide tokens over cups and caps.  In addition, it follows from the first two relations in \cref{GPC3,mirror} that
\begin{equation}
    \lolly\
    \begin{tikzpicture}[anchorbase]
        \draw (0,-0.4) -- (0,0.4);
    \end{tikzpicture}
    =
    \begin{tikzpicture}[anchorbase]
        \draw (0,-0.4) -- (0,0.4);
    \end{tikzpicture}
    \ \lolly.
\end{equation}
In other words, the morphism $\lolly$ is strictly central.

\begin{theo} \label{twocats}
    Let $d \in \kk$.  As a $\kk$-linear monoidal category, $\Par(G,d)$ is isomorphic to the quotient of $\Par(G)$ by the relation
    \begin{equation} \label{GPC6}
        \lolly\ = d.
    \end{equation}
\end{theo}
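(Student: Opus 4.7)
The plan is to build mutually inverse strict $\kk$-linear monoidal functors between $\Par(G,d)$ and the quotient category $\cC := \Par(G)/\langle\lolly - d \cdot 1_\one\rangle$. First I would define a functor $\Phi \colon \cC \to \Par(G,d)$ sending $\go \mapsto 1$ and each generating string diagram to the corresponding ``obvious'' $G$-partition diagram: $\merge$ to the diagram of type $\binom{1}{2}$ with all three vertices in one part, $\spliter$ to its reflection, $\crossing$ to the crossing-type partition, $\bottompin$ and $\toppin$ to isolated singletons, and $\tokstrand[g]$ to the identity strand of type $\binom{1}{1}$ with its bottom vertex labeled by $g$. Then I would verify each defining relation of $\Par(G)$ holds in $\Par(G,d)$: the Frobenius relations \cref{GPC1} and commutativity/specialness \cref{GPC4} hold because compositions of the corresponding diagrams depend only on which vertices end up in the same connected component; the symmetric group relations \cref{GPC2,GPC3} are immediate; the token relations \cref{GPC5} translate directly to the equivalence relation and the $G$-multiplication on labels. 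The Kronecker delta in the second relation of \cref{GPC4} corresponds to the fact that stacking two mutually compatible partitions with mismatched labels in the middle row produces the zero morphism. Finally, the $\lolly$ morphism maps to a single closed loop, which by \cref{GPCbasis} equals $d$, so \cref{GPC6} is also satisfied and $\Phi$ descends to $\cC$.

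Next I would show $\Phi$ is full. Using the factorization \cref{sea}, every equivalence class $[P,\bg]$ is obtained from the underlying partition diagram $P$ by precomposing and postcomposing with token-labeled identity strands, so it suffices to show that every unlabeled partition diagram lies in the image. This is the standard fact for the ordinary partition category, achieved by stacking merges, splits, crossings, and pins in a pattern dictated by the partition, and it transfers verbatim via the faithful functor of \cref{snow} composed with $\Phi$.

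To prove $\Phi$ is faithful, I would construct a spanning set for $\Hom_\cC(k,l)$ indexed by equivalence classes of $G$-partitions of type $\binom{l}{k}$, matching the basis of $\Par(G,d)$ from \cref{GPCbasis}. The reduction argument proceeds by: $(1)$ using \cref{adjunction,vortex1,vortex2} to treat diagrams up to planar isotopy; $(2)$ using the Frobenius, commutativity, and special relations (plus \cref{mirror}) to collapse every connected tangle on a fixed set of external endpoints to a canonical shape depending only on the set of endpoints; $(3)$ using \cref{GPC5,ramp,pool} to slide all tokens to a designated position on each component and then multiply them into a single token per component; $(4)$ using \cref{GPC1} and \cref{GPC6} to eliminate every closed loop at the cost of a factor of $d$. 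The remaining ambiguity --- that different tokens on the same component can be absorbed into a global $G$-translation of the component's labels --- is exactly the equivalence relation defining $[P,\bg]$. This shows $\dim \Hom_\cC(k,l)$ is at most the number of equivalence classes of $G$-partitions of type $\binom{l}{k}$; combined with fullness and the fact that $\Phi$ sends the spanning set to the basis of $\Par(G,d)$, this forces $\Phi$ to be a bijection on morphism spaces.

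The main obstacle is step $(3)$ of the faithfulness argument: verifying that the token-slide relations are strong enough to move tokens freely within any connected component of the canonical shape --- in particular, across cups, caps, merges, splits, and crossings in all orientations --- so that the equivalence relation on $G$-partitions is fully realized in the quotient. This requires a careful enumeration of cases using the consequences collected in \cref{Sydney}. Once this is established, the inverse functor $\Psi \colon \Par(G,d) \to \cC$ can be defined on the basis $[P,\bg]$ by assigning the corresponding canonical-form string diagram, and the mutual inverse property $\Psi \circ \Phi = \id$, $\Phi \circ \Psi = \id$ follows directly from the normal-form argument together with the explicit formulas for $\Phi$ on generators.
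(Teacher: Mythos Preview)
Your proposal is correct and follows essentially the same strategy as the paper: define the functor on generators, verify the relations, prove fullness by reducing to the ordinary partition category via \cref{sea} and \cref{snow}, and prove faithfulness by showing every morphism reduces to a normal form indexed by equivalence classes of $G$-partitions. The paper streamlines your steps (1)--(2) of the faithfulness argument by invoking the known $G=\{1\}$ result wholesale (so that, ignoring tokens, any diagram already reduces to $D_\pi \circ S \circ D_\sigma$ for a tensor product $S$ of star diagrams), and then handles tokens exactly as in your step (3); this bootstrapping saves reproving the Frobenius/planar-isotopy reduction from scratch, but your more self-contained route is equally valid.
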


\begin{proof}
    Let $\Par'(G,d)$ denote the quotient of $\Par(G)$ by the additional relation \cref{GPC6}.  We define a functor $F \colon \Par'(G,d) \to \Par(G,d)$ as follows:  On objects, define $F(\go^{\otimes k}) = k$, $k \in \N$.  We define $F$ on the generating morphisms by
    \begin{gather*}
        \merge \mapsto
        \begin{tikzpicture}[anchorbase]
            \pd{-0.2,0};
            \pd{0.2,0};
            \pd{0,0.4};
            \draw (-0.2,0) \braidto (0,0.4);
            \draw (0.2,0) \braidto (0,0.4);
        \end{tikzpicture}
        \ ,\quad
        \spliter \mapsto
        \begin{tikzpicture}[anchorbase]
            \pd{-0.2,0.4};
            \pd{0.2,0.4};
            \pd{0,0};
            \draw (0,0) \braidto (-0.2,0.4);
            \draw (0,0) \braidto (0.2,0.4);
        \end{tikzpicture}
        \ ,\quad
        \crossing \mapsto
        \begin{tikzpicture}[anchorbase]
            \pd{-0.2,0.4};
            \pd{0.2,0.4};
            \pd{-0.2,0};
            \pd{0.2,0};
            \draw (0.2,0) -- (-0.2,0.4);
            \draw (-0.2,0) -- (0.2,0.4);
        \end{tikzpicture}
        \ ,\quad
        \bottompin \mapsto
        \begin{tikzpicture}[centerzero]
            \pd{0,0.2};
        \end{tikzpicture}
        \ ,\quad
        \toppin \mapsto
        \begin{tikzpicture}[centerzero]
            \pd{0,-0.2};
        \end{tikzpicture}
        \ ,\quad
        \tokstrand \mapsto
        \begin{tikzpicture}[centerzero]
            \pd{0,-0.2} node[anchor=north] {\dotlabel{g}};
            \pd{0,0.2};
            \draw (0,-0.2) -- (0,0.2);
        \end{tikzpicture}
        =
        \begin{tikzpicture}[centerzero]
            \pd{0,-0.2};
            \pd{0,0.2} node[anchor=south] {\dotlabel{g^{-1}}};
            \draw (0,-0.2) -- (0,0.2);
        \end{tikzpicture}
        \ ,
    \end{gather*}
    where, by convention, unlabelled vertices in $G$-partition diagrams carry the label $1_G$.  Note that the image of $\bottompin$ is the unique $G$-partition diagram of type $\binom{1}{0}$ (i.e.\ the vertex there is in the top row), while the image of $\toppin$ is the unique $G$-partition diagram of type $\binom{0}{1}$.  It is straightforward to verify that the relations \cref{GPC1,GPC2,GPC3,GPC4,GPC5,GPC6} are preserved by $F$, so that $F$ is well defined.

    Since $F$ is clearly bijective on objects, it remains to show that it is full and faithful.  Since \cref{twocats} is known to hold in the case where $G$ is the trivial group (see \cite[Th.~2.1]{Com16}, or \cite[Prop.~2.1]{NS19} for a diagrammatic treatment), it follows from the existence of the functor \cref{snow} that any partition diagram $[P] = [P,\mathbf{1}]$ is in the image of $F$.  Thus, by \cref{sea} so is an arbitrary equivalence class $[P,\bg]$ of $G$-partitions.  Hence $F$ is full.

    It remains to prove that $F$ is faithful.  To do this, it suffices to show that
    \[
        \dim \Hom_{\Par'(G,d)}(\go^{\otimes k}, \go^{\otimes l}) \le
        \dim \Hom_{\Par(G,d)}(k,l)
        \quad \text{for all } k,l \in \N.
    \]
    We do this by showing that every morphism of $\Par'(G,d)$ obtained from the generators by composition and tensor product can be reduced to a scalar multiple of a standard form, with the standard forms being in natural bijection with the number of $G$-partition diagrams.

    We first introduce \emph{star diagrams} $S_a^b \in \Hom_{\Par'(G,d)}(\go^{\otimes a},\go^{\otimes b})$ for $(a,b) \in \N^2 \setminus \{(0,0)\}$ as follows.  Define
    \[
        S_1^0 := \toppin,\quad
        S_0^1 := \bottompin,\quad
        S_1^1 := \idstrand\ .
    \]
    Then define general star diagrams recursively by
    \begin{align*}
        S_a^{b+1} &:= \left( 1_{\go^{\otimes (b-1)}} \otimes \spliter \right) \circ S_a^b \quad \text{for } b \ge 1, \\
        S_{a+1}^b &:= S_a^b \circ \left( 1_{\go^{\otimes (a-1)}} \otimes \merge \right) \quad \text{for } a \ge 1.
    \end{align*}
    For example, we have
    \[
        S_2^0 =
        \begin{tikzpicture}[anchorbase]
            \draw (-0.2,-0.2) -- (0,0) -- (0.2,-0.2);
            \draw (0,0) -- (0,0.2);
            \opendot{0,0.2};
        \end{tikzpicture}
        =
        \begin{tikzpicture}[anchorbase]
            \draw (-0.2,-0.1) -- (-0.2,0) arc(180:0:0.2) -- (0.2,-0.1);
        \end{tikzpicture}
        \ ,\quad
        S_0^3 =
        \begin{tikzpicture}[anchorbase]
            \draw (0,-0.2) -- (0,0.6);
            \draw (0,0) -- (0.3,0.3) -- (0.3,0.6);
            \draw (0.3,0.3) -- (0.6,0.6);
            \opendot{0,-0.2};
        \end{tikzpicture}
        \ ,\quad
        S_3^4 =
        \begin{tikzpicture}[anchorbase]
            \draw (0,0) -- (0,0.9);
            \draw (0,0) -- (0.3,0.3) -- (0.3,0.9);
            \draw (0.3,0.3) -- (0.6,0.6) -- (0.6,0.9);
            \draw (0.6,0.6) -- (0.9,0.9);
            \draw (0,0) -- (0,-0.9);
            \draw (0,-0.3) -- (0.3,-0.6) -- (0.3,-0.9);
            \draw (0.3,-0.6) -- (0.6,-0.9);
        \end{tikzpicture}
        \ .
    \]
    Every permutation $\pi \in \fS_k$ gives rise to a partition of type $\binom{k}{k}$ with parts $\{i,\pi(i)'\}$, $1 \le i \le k$.  Fixing a reduced decomposition for $\pi$ induces a decomposition of the corresponding partition diagram as a composition of tensor products of the generator $\crossing$ and identity morphisms.  We fix such a decomposition for each permutation, writing $D_\pi$ for the corresponding element of $\Par'(G,d)$.  For example, if we choose the reduced decomposition $s_1 s_2 s_1$ for the permutation $(1\, 3) \in \fS_3$, we have
    \[
        D_{(1\, 3)} =
        \begin{tikzpicture}[anchorbase]
            \draw (-0.4,-0.4) -- (0.4,0.4);
            \draw (0.4,-0.4) -- (-0.4,0.4);
            \draw (0,-0.4) \braidto (0.4,0) \braidto (0,0.4);
        \end{tikzpicture}
        \ .
    \]

    Now, fix a representative $(P,\bg)$ of each equivalence class $[P,\bg]$ of $G$-partitions.  Then, for each such representative, fix a \emph{standard decomposition}
    \begin{equation} \label{supernova}
        [P,\bg] =
        \begin{tikzpicture}[centerzero]
            \pd{-0.4,-0.2};
            \pd{0.4,-0.2};
            \pd{0.8,-0.2};
            \pd{-0.4,0.2} node[anchor=south] {\dotlabel{g_{l'}}};
            \pd{0.4,0.2} node[anchor=south] {\dotlabel{g_{2'}}};
            \pd{0.8,0.2} node[anchor=south] {\dotlabel{g_{1'}}};
            \draw (-0.4,-0.2) -- (-0.4,0.2);
            \draw (0.4,-0.2) -- (0.4,0.2);
            \draw (0.8,-0.2) -- (0.8,0.2);
            \node at (0.04,0) {$\cdots$};
        \end{tikzpicture}
        \circ
        F(D_\pi) \circ F(S) \circ F(D_\sigma)
        \circ
        \begin{tikzpicture}[centerzero]
            \pd{-0.4,-0.2} node[anchor=north] {\dotlabel{g_k}};
            \pd{0.4,-0.2} node[anchor=north] {\dotlabel{g_2}};
            \pd{0.8,-0.2} node[anchor=north] {\dotlabel{g_1}};
            \pd{-0.4,0.2};
            \pd{0.4,0.2};
            \pd{0.8,0.2};
            \draw (-0.4,-0.2) -- (-0.4,0.2);
            \draw (0.4,-0.2) -- (0.4,0.2);
            \draw (0.8,-0.2) -- (0.8,0.2);
            \node at (0.04,0) {$\cdots$};
        \end{tikzpicture}
        \ ,
    \end{equation}
    where $\pi \in \fS_l$, $\sigma \in \fS_k$, and $S$ is a tensor product of star diagrams.  The existence of such a standard decomposition follows from \cref{sea}, together with the existence of a decomposition of the form $[P] = F(D_\pi) \circ F(S) \circ F(D_\sigma)$ for any partition diagram $P$. (Precisely, $S$ is a tensor product of star diagrams, one for each connected component in $P$, and then $\pi$ and $\sigma$ are permutations such that $F(D_\pi)$ and $F(D_\sigma)$ connect the top and bottom vertices of $P$ to the corresponding legs of $S$.)  Then define
    \begin{equation} \label{formica}
        y_{P,\bg} :=
        \left(
            \tokstrand[g_{l'}^{-1}]
            \cdots
            \tokstrand[g_{2'}^{-1}]
            \tokstrand[g_{1'}^{-1}]
            \
        \right)
        \circ
        D_\pi \circ S \circ D_\sigma
        \circ
        \left(
            \tokstrand[g_k]
            \cdots
            \tokstrand[g_{2'}]
            \tokstrand[g_{1'}]
            \
        \right).
    \end{equation}
    Hence $F(y_{P,\bg}) = [P,\bg]$

    To complete the proof that $F$ is faithful, it remains to show that any morphism in $\Par'(G,d)$ that is obtained from the generators by tensor product and composition is equal to a scalar multiple of $y_{P,\bg}$ for some chosen representative $(P,\bg)$.  As noted above, \cref{twocats} holds for the partition category, which is the case where $G = \{1\}$ is the trivial group.  Now, if we ignore tokens, the relations \cref{GPC1,GPC2,GPC3,GPC4,GPC6} correspond to the relations in the $G = \{1\}$ case, except for the fact that the second relation in \cref{GPC4} gives zero when $g \ne h$.  It follows that every morphism in $\Par'(G,d)$ obtained from the generators by tensor product and composition is equal to a (potentially zero) scalar multiple of a morphism obtained from some $D_\pi \circ S \circ D_\sigma$ by adding tokens (since, ignoring tokens, this can be done in the partition category).  Then, since the string diagram for $D_\pi \circ S \circ D_\sigma$ is a tree (i.e.\ contains no cycles), one can use relations \cref{GPC5,ramp,pool} to move all tokens to the ends of strings and combine them into a single token at each endpoint.  This yields a diagram of the form \cref{formica}, except that the tokens may not correspond to our chosen representative of the equivalence class of $G$-partitions.  However, we then use the relations \cref{GPC5,ramp,pool} to adjust the tokens at the endpoints so that we obtain the chosen representative.
\end{proof}

In the context of \cref{FrobObj}, \cref{GPC6} is the statement that the Frobenius object $\go$ has dimension $d$.  From now on, we will identify $\Par(G,d)$ with the quotient of $\Par(G)$ by the relation \cref{GPC6} via the isomorphism of \cref{twocats}.  In particular, we will identify the object $k$ of $\Par(G,d)$ with the object $\go^{\otimes k}$ of $\Par(G)$, for $k \in \N$.

\begin{rem} \label{gummies}
    Suppose we define $\Par(G,d)$ as in \cref{GPCbasis}, but over the ring $\kk[d]$, so that $d$ is an indeterminate.  It then follows from \cref{twocats} that $\Par(G,d)$ is isomorphic to $\Par(G)$ as a $\kk$-linear monoidal category.  Under this isomorphism, $d 1_\one$ corresponds to $\lolly$.
\end{rem}

For $k \in \N$, we define the \emph{$G$-partition algebra}
\begin{equation} \label{fort}
    P_k(G,d) := \End_{\Par(G,d)}(\go^{\otimes k}).
\end{equation}
When we do not wish to make $G$ explicit, we call these \emph{group partition algebras}.  These algebras appeared in \cite{Blo03}, where they are called \emph{$G$-colored partition algebras}.  A diagrammatic description of these algebras, different from that of the current paper, is given \cite[\S 6.2]{Blo03}.

\section{Categorical action\label{sec:action}}

In this section we assume that the group $G$ is finite of order $|G|$.  We define a categorical action of the $G$-partition category on the category of modules for the wreath product groups $G_n = G^n \rtimes \fS_n$.  We first describe this action by giving the action of the generators, and then describe the action of an arbitrary $G$-partition diagram.  Recall that $A_n = \kk G_n$ is the group algebra of $G_n$, and so we can naturally identify $A_n$-modules and representations of $G_n$.

\begin{theo} \label{hide}
    For $n \in \N$, we have a strong monoidal $\kk$-linear functor $\Phi_n \colon \Par(G,n|G|) \to A_n\md$ given as follows.  On objects, $\Phi_n$ is determined by $\Phi_n(\go) = V$.  On generating morphisms, $\Phi_n$ is given by
    \begin{align*}
        \Phi_n(\merge) &\colon V \otimes V \to V, &
        g e_i \otimes h e_j &\mapsto \delta_{g,h} \delta_{i,j} g e_i,
        \\
        \Phi_n(\spliter) &\colon V \to V \otimes V, &
        g e_i &\mapsto g e_i \otimes g e_i,
        \\
        \Phi_n(\crossing) &\colon V \otimes V \to V, &
        v \otimes w &\mapsto w \otimes v,
        \\
        \Phi_n(\bottompin) &\colon \mathbf{1}_n \to V, &
        1 &\mapsto \textstyle \sum_{g \in G} \sum_{i=1}^n g e_i,
        \\
        \Phi_n(\toppin) &\colon V \to \mathbf{1}_n, &
        g e_i &\mapsto 1,
        \\
        \Phi_n(\tokstrand) & \colon V \to V, &
        h e_i &\mapsto h g^{-1} e_i,
    \end{align*}
    for $g,h \in G$, $1 \le i,j \le n$, $v,w \in V$.
\end{theo}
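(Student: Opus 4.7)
The plan is to prove well-definedness of $\Phi_n$ in three stages: first show each generator is sent to an $A_n$-module homomorphism, second verify the relations \cref{GPC1}--\cref{GPC5} of \cref{GPC}, and third check the parameter relation \cref{GPC6} that $\lolly \mapsto n|G| \cdot 1_{\mathbf{1}_n}$. Strong monoidality is automatic from the definition $\Phi_n(\go^{\otimes k}) = V^{\otimes k}$ together with the fact that the generators land in the expected tensor powers.

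For equivariance, I would compute directly on the basis $\BV$ of $V^{\otimes k}$ defined in \cref{basis}, using the explicit formula \cref{perm2} for the action of $\bk \pi \in G_n$. For example, for $\Phi_n(\merge)$ one finds
\[
    \bk\pi \cdot \Phi_n(\merge)(ge_i \otimes he_j)
    = \delta_{g,h}\delta_{i,j} k_{\pi(i)} g e_{\pi(i)}
    = \Phi_n(\merge)\bigl(\bk\pi \cdot (ge_i \otimes he_j)\bigr),
\]
where the key observation is that $\pi(i) = \pi(j)$ forces $i = j$, which then forces the Kronecker deltas on both sides to agree. The cases of $\Phi_n(\spliter)$, $\Phi_n(\crossing)$, $\Phi_n(\toppin)$, and $\Phi_n(\tokstrand)$ are all similar one-line verifications. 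The only slightly subtle point is the $G_n$-invariance of $\Phi_n(\bottompin)(1) = \sum_{g,i} g e_i$: here one uses that left multiplication on $G$ by a fixed element $k_{\pi(i)}$ is a bijection and that $\pi$ permutes indices, so after reparametrizing $(g, i) \mapsto (k_{\pi(i)}g, \pi(i))$ the sum is invariant.

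For the relations, most are routine identities on basis vectors. The Frobenius relations \cref{GPC1} follow from the fact that $(e_i)_{i=1}^n$ and $(ge_i)$ are orthogonal idempotents under the ``diagonal'' multiplication defined by $\Phi_n(\merge)$ and $\Phi_n(\spliter)$. The symmetric group relations \cref{GPC2} are immediate since $\Phi_n(\crossing)$ is the flip map. The crossing/pin relations \cref{GPC3} reduce to the fact that $\Phi_n(\bottompin)(1)$ is symmetric in the tensor factors and the split/merge are. The first relation in \cref{GPC4} expresses commutativity of the Frobenius object, which is clear from the symmetric formulas for $\merge$ and $\spliter$. The second relation, after $\spliter$-tokens-$\merge$, produces $\delta_{bg^{-1},bh^{-1}}\, bg^{-1}e_i = \delta_{g,h}\, bg^{-1}e_i$ on the basis element $be_i$, matching $\delta_{g,h}\, \Phi_n(\tokstrand[g])(be_i)$. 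The token relations \cref{GPC5} are a direct translation of the group operation on $G$ via $he_i \mapsto hg^{-1}e_i$: composition gives $he_i \mapsto h(gh')^{-1}e_i$ realizing $\tokstrand[gh']$; sliding through $\crossing$ and $\spliter$ is immediate from the formulas; and the pin-with-token relation uses again that left multiplication by $g$ permutes $G$.

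Finally, for \cref{GPC6}, I compute
\[
    \Phi_n(\lolly) = \Phi_n(\toppin) \circ \Phi_n(\bottompin)
    = \sum_{g \in G} \sum_{i=1}^n 1 = n|G|,
\]
confirming that $\Phi_n$ factors through $\Par(G, n|G|)$. The main obstacle is purely organizational rather than mathematical: there are many generators and relations to check, but each verification reduces to a short calculation on the basis $\BV$ using \cref{perm2}. Once all relations are confirmed, the universal property of the presentation \cref{GPC} combined with \cref{twocats} gives the desired functor.
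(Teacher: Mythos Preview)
Your proposal is correct and follows essentially the same approach as the paper: verify the relations \cref{GPC1}--\cref{GPC5} by direct computation on the basis $\BV$. In fact you are more thorough than the paper's proof, which omits both the explicit equivariance check (that each generating morphism is an $A_n$-module map) and the verification of \cref{GPC6}; these are left implicit there, but your inclusion of them makes the argument complete.
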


\begin{proof}
    We must show that the action preserves the relations \cref{GPC1,GPC2,GPC3,GPC4,GPC5}.

    \medskip

    \noindent \emph{Relations \cref{GPC1}}:  To prove the first three equalities in \cref{GPC1}, we compute
    \begin{gather*}
        \Phi_n \left( \merge \right) \circ \Phi_n \left( \, \idstrand \otimes \bottompin \right) (g e_i)
        = \sum_{h\in G} \sum_{j=1}^{n} \Phi_n \left( \merge \right) (g e_i\otimes h e_j)
        = \sum_{h\in G} \sum_{j=1}^{n} \delta_{g,h}\delta_{i,j} g e_i
        = g e_i,
        \\
        \Phi_n \left(\, \idstrand \otimes \toppin \right) \circ \Phi_n \left( \spliter \right) (g e_i)
        = \Phi_n \left(\, \idstrand \otimes \toppin \right) (g e_i \otimes g e_i)
        = g e_i
        = \Phi_n \left(\toppin \otimes \idstrand \, \right) \circ \Phi_n \left( \spliter \right) (g e_i).
    \end{gather*}
    To prove the third relation, we compute
    \begin{gather*}
        \Phi_n (\spliter) \circ \Phi_n (\merge) (g e_i \otimes h e_j)
        = \delta_{g,h} \delta_{i,j} \Phi_n (\spliter) (g e_i)
        = \delta_{g,h}\delta_{i,j} (g e_i\otimes g e_i),
        \\
        \Phi_n \left( \merge \otimes \idstrand\, \right) \circ \Phi_n \left(\, \idstrand \otimes \spliter \right) (g e_i \otimes h e_j)
        = \Phi_n \left( \merge\otimes \idstrand\, \right) (g e_i\otimes h e_j\otimes h e_j)
        = \delta_{g,h}\delta_{i,j} ( g e_i \otimes g e_i),
        \\
        \Phi_n \left(\, \idstrand \otimes \merge \right) \circ \Phi_n \left( \spliter \otimes \idstrand\, \right) (g e_i \otimes h e_j)
        = \Phi_n \left(\, \idstrand \otimes \merge \right) (g e_i \otimes g e_i \otimes h e_j)
        = \delta_{g,h} \delta_{i,j} ( g e_i\otimes g e_i ),
    \end{gather*}
    concluding that the three maps are identical.

    \medskip

    \noindent \emph{Relations \cref{GPC2}}:  The relations \cref{GPC2} are straightforward.
    \details{
        To prove the second relation, we see that the left-hand side is sent to the map given by
        \begin{multline*}
            \Phi_n \left( \crossing \otimes \idstrand \, \right) \circ \Phi_n \left(\, \idstrand \otimes \crossing \right) \circ \Phi_n \left( \crossing \otimes \idstrand \, \right) (v \otimes w \otimes z)
            \\
            = \Phi_n \left (\crossing \otimes \idstrand \, \right) \circ \Phi_n \left(\, \idstrand \otimes \crossing \right) (w\otimes v\otimes z)
            = \Phi_n \left( \crossing \otimes \idstrand\, \right) (w \otimes z \otimes v)
            = z \otimes w \otimes v
        \end{multline*}
        while the right-hand side is sent to the map given by
        \begin{multline*}
            \Phi_n \left(\, \idstrand \otimes \crossing \right) \circ \Phi_n \left( \crossing \otimes \idstrand \, \right) \circ \Phi_n \left(\, \idstrand \otimes \crossing \right) (v \otimes w \otimes z)
            \\
            = \Phi_n \left(\, \idstrand\, \otimes \crossing \right) \circ \Phi_n \left( \crossing \otimes \idstrand \, \right) (v \otimes z \otimes w)
            = \Phi_n \left(\, \idstrand \otimes \crossing \right) (z \otimes v \otimes w)
            = z \otimes w \otimes v.
        \end{multline*}
    }

    \medskip

    \noindent \emph{Relations \cref{GPC3}}:  To prove the first relation in \cref{GPC3}, we compute
    \[
        \Phi_n\left( \crossing  \right) \circ \Phi_n \left(\, \idstrand \otimes \bottompin \right) (g e_i)
        = \sum_{h\in G} \sum_{j=1}^n \Phi_n \left( \crossing \right) (g e_i \otimes h e_j)
        = \sum_{h\in G} \sum_{j=1}^n (h e_j\otimes g e_i),
    \]
    and
    \[
        \Phi_n \left( \bottompin \otimes \idstrand \, \right) (g e_i)
        = \sum_{h \in G} \sum_{j=1}^n (h e_j \otimes g e_i).
    \]
    Similarly, to prove the second relation, we calculate
    \[
        \Phi_n \left(\, \idstrand \otimes \toppin \right) \circ \Phi_n \left( \crossing  \right) (g e_i \otimes h e_j)
        = \Phi_n \left(\, \idstrand \otimes \toppin \right) (h e_j \otimes g e_i)
        = h e_j
        = \Phi_n \left( \toppin \otimes \idstrand \, \right) (g e_i \otimes h e_j).
    \]
    The proof of the last two equalities in \cref{GPC3} are similar; we only check the last one.  We have
    \begin{multline*}
        \Phi_n \left( \, \idstrand \otimes \crossing \right) \circ \Phi_n \left( \crossing \otimes \idstrand\, \right) \circ \Phi_n \left(\, \idstrand \otimes \spliter \right) (g e_i \otimes h e_j)
        = \Phi_n \left( \, \idstrand \otimes \crossing \right) \circ \Phi_n \left( \crossing \otimes \idstrand\, \right) (g e_i \otimes h e_j \otimes h e_j)
        \\
        = \Phi_n \left( \, \idstrand \otimes \crossing \right) (h e_j \otimes g e_i \otimes h e_j)
        = h e_j \otimes h e_j \otimes g e_i
    \end{multline*}
    and
    \[
        \Phi_n \left( \spliter \otimes \idstrand \, \right) \circ \Phi_n \left( \crossing \right) (g e_i \otimes h e_j)
        = \Phi_n \left( \spliter \otimes \idstrand \, \right) (h e_j \otimes g e_i)
        = h e_j \otimes h e_j \otimes g e_i.
    \]

    \medskip

    \noindent \emph{Relations \cref{GPC4}}:  To prove the first relation in \cref{GPC4}, we compute
    \[
        \Phi_n \left( \merge \right) \circ \Phi_n \left( \crossing \right) (g e_i \otimes h e_j)
        = \Phi_n \left(\merge\right)(h e_j \otimes g e_i)
        = \delta_{g,h} \delta_{i,j} g e_i
        = \Phi_n \left(\merge\right)(g e_i \otimes h e_j).
    \]
    For the second relation, we have
    \begin{multline*}
        \Phi_n \left(\merge\right) \circ \Phi_n \left( \tokstrand \otimes \tokstrand[h] \right) \circ \Phi_n \left(\spliter\right) (k e_i)
        = \Phi_n \left(\merge\right) \circ \Phi_n \left( \tokstrand \otimes \tokstrand[h] \right) (k e_i \otimes k e_i)
        \\
        = \Phi_n \left(\merge\right) (k g^{-1} e_i \otimes k h^{-1} e_i)
        =\delta_{g,h}(k g^{-1} e_i)
        = \delta_{g,h}\Phi_n(\tokstrand)(k e_i).
    \end{multline*}

    \medskip

    \noindent \emph{Relations \cref{GPC5}}: The relations \cref{GPC5} are straightforward to verify.
    \details{
        For the first relation, we have
        \[
            \Phi_n(\tokstrand) \circ \Phi_n(\tokstrand[h])(k e_i)
            = \Phi_n(\tokstrand)(k h^{-1} e_i) \\
            = k h^{-1} g^{-1} e_i
            = \Phi_n \left( \tokstrand[gh] \right) (k e_i).
        \]
        The second relation is clear.  For the third relation, we have:
        \begin{multline*}
            \Phi_n \left(\crossing\right) \circ \Phi_n \left( \tokstrand \otimes \idstrand \, \right) (h e_i \otimes k e_j)
            = \Phi_n \left(\crossing\right) (h g^{-1} e_i \otimes k e_j)
            = k e_j\otimes h g^{-1} e_i
            \\
            = \Phi_n \left( \, \idstrand \otimes \tokstrand \right) (k e_j \otimes h e_i)
            = \Phi_n \left( \, \idstrand \otimes \tokstrand \right) \circ \Phi_n \left(\crossing\right) (h e_i \otimes k e_j).
        \end{multline*}
        The fourth and fifth relations are clear.
    }
\end{proof}

If $(P,\bg)$ is a $G$-partition of type $\binom{l}{k}$, then $\Phi_n([P,\bg]) \in \Hom_{G_n}(V^k,V^l)$ is uniquely described by its matrix coefficients:
\begin{equation}
    \Phi_n([P,\bg])(h_k e_{i_k} \otimes \dotsb \otimes h_1 e_{i_1})
    = \sum_{\substack{h_{1'},\dotsc,h_{l'} \in G \\ 1 \le i_{1'},\dotsc,i_{l'} \le n}} M(P,\bg)_{h_1,\dotsc,h_k,i_1,\dotsc,i_k}^{h_{1'},\dotsc,h_{l'},i_{1'},\dotsc,i_{l'}} h_{l'} e_{i_{l'}} \otimes \dotsb \otimes h_{1'} e_{i_{1'}}.
\end{equation}
The matrix $M(P,\bg)$ depends only on the equivalence class $[P,\bg]$ of $(P,\bg)$.

\begin{prop} \label{forest}
    Suppose $(P,\bg)$ is a $G$-partition of type $\binom{l}{k}$.  Then $M(P,\bg)_{h_1,\dotsc,h_k,i_1,\dotsc,i_k}^{h_{1'},\dotsc,h_{l'},i_{1'},\dotsc,i_{l'}} = 1$ if $i_a = i_b$ and $h_a g_a^{-1} = h_b g_b^{-1}$ for all $a,b \in \PS_k^l$ in the same part of $P$.  Otherwise, $M(P,\bg)_{h_1,\dotsc,h_k,i_1,\dotsc,i_k}^{h_{1'},\dotsc,h_{l'},i_{1'},\dotsc,i_{l'}} = 0$.
\end{prop}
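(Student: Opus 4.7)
The plan is to exploit the standard decomposition from \eqref{sea}, which in $\Par(G)$ reads
\[
    [P, \bg] = \bigl(\tokstrand[g_{l'}^{-1}] \otimes \cdots \otimes \tokstrand[g_{1'}^{-1}]\bigr) \circ [P] \circ \bigl(\tokstrand[g_k] \otimes \cdots \otimes \tokstrand[g_1]\bigr),
\]
using that a token labelled $g$ at the top of a strand is equivalent to one labelled $g^{-1}$ at the bottom.  Since $\Phi_n(\tokstrand[g])(h e_i) = h g^{-1} e_i$, applying $\Phi_n$ to the bottom tokens sends the input $h_k e_{i_k} \otimes \cdots \otimes h_1 e_{i_1}$ to $(h_k g_k^{-1}) e_{i_k} \otimes \cdots \otimes (h_1 g_1^{-1}) e_{i_1}$, while the top tokens send an intermediate output $w_{l'} e_{j_{l'}} \otimes \cdots \otimes w_{1'} e_{j_{1'}}$ to $(w_{l'} g_{l'}) e_{j_{l'}} \otimes \cdots \otimes (w_{1'} g_{1'}) e_{j_{1'}}$.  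Matching with the target $h_{l'} e_{i_{l'}} \otimes \cdots \otimes h_{1'} e_{i_{1'}}$ forces $j_{a'} = i_{a'}$ and $w_{a'} = h_{a'} g_{a'}^{-1}$, so the problem reduces to computing the matrix coefficient of $\Phi_n([P])$ for the unlabelled partition diagram $[P]$ with effective input labels $h_a g_a^{-1}$ and effective output labels $h_{a'} g_{a'}^{-1}$.

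Next I would invoke the decomposition $[P] = F(D_\pi) \circ F(S) \circ F(D_\sigma)$ from the proof of \cref{twocats}, where $F(D_\pi), F(D_\sigma)$ are images of permutations and $F(S)$ is a tensor product of star diagrams, one per connected component of $P$.  Under $\Phi_n$, the permutations merely permute tensor factors, preserving indices and group labels individually, so it remains to compute the matrix coefficients of a single star diagram $S_a^b$.  A short induction on $a+b$ using the recursive definitions $S_a^{b+1} = (1_{\go^{\otimes(b-1)}} \otimes \spliter) \circ S_a^b$ and $S_{a+1}^b = S_a^b \circ (1_{\go^{\otimes(a-1)}} \otimes \merge)$, combined with $\Phi_n(\merge)(u e_j \otimes v e_l) = \delta_{u,v}\delta_{j,l}\, u e_j$ and $\Phi_n(\spliter)(u e_j) = u e_j \otimes u e_j$, shows that the matrix coefficient of $\Phi_n(S_a^b)$ from $u_a e_{j_a} \otimes \cdots \otimes u_1 e_{j_1}$ to $u_{b'} e_{j_{b'}} \otimes \cdots \otimes u_{1'} e_{j_{1'}}$ is $1$ if and only if all the $u$'s coincide and all the $j$'s coincide, and is $0$ otherwise; the base cases $S_0^1, S_1^0, S_1^1$ follow directly from $\Phi_n(\bottompin)(1) = \sum_{u,j} u e_j$ and $\Phi_n(\toppin)(u e_j) = 1$.

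Combining the two steps yields exactly the formula of the proposition: the matrix coefficient of $\Phi_n([P])$ is $1$ precisely when, within each part of $P$, all the effective group labels $h \cdot g^{-1}$ coincide and all the indices coincide, and $0$ otherwise.  The main obstacle is purely bookkeeping --- keeping straight the right-to-left numbering of tensor factors and remembering that $\Phi_n(\tokstrand[g])$ multiplies by $g^{-1}$ on the right, which is exactly what converts a naive equality $h_a = h_b$ into the stated condition $h_a g_a^{-1} = h_b g_b^{-1}$.  No substantively new ideas beyond those in \cref{twocats,hide} are needed.
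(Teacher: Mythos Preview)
Your proposal is correct and follows essentially the same approach as the paper: the paper's proof is a one-sentence pointer to precisely the computation you outline, namely writing $[P,\bg]$ via the standard decomposition \cref{supernova} into tokens, permutations, and star diagrams, and then computing $\Phi_n$ on each piece using \cref{hide}. Your account simply fills in the details the paper leaves to the reader.
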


\begin{proof}
    This follows from a straightforward computation using the definition of $\Phi_n$ in \cref{hide} and the isomorphism described in \cref{twocats}, writing each component of $P$ as a composition of tokens, merges, splits, and crossings as in \cref{supernova}.
\end{proof}

Recall the basis $\BV$ for $V^{\otimes k}$ from \cref{basis}.  Given a $G$-partition $(P,\bg)$ of type $\binom{0}{k}$, let $O_{P,\bg}$ denote the set of all $h_k e_{i_k} \otimes \dotsb \otimes h_1 e_{i_1} \in \BV$ such that
\begin{itemize}
    \item $i_a = i_b$ if and only if $a,b$ are in the same part of the partition $P$, and
    \item $h_a g_a^{-1} = h_b g_b^{-1}$ for all $a,b$ in the same part of the partition $P$.
\end{itemize}
Then we have $O_{P,\bg} = O_{P,\bh}$ if and only if $(P,\bg) \sim (P,\bh)$, and so we can define $O_{[P,\bg]} := O_{P,\bg}$.  The $G_n$-orbits of $\BV$ are the $O_{[P,\bg]}$ with $[P,\bg] \colon \go^{\otimes k} \to \one$ having at most $n$ parts (i.e.\ $P$ is a partition of $\{1,\dotsc,k\}$ having at most $n$ parts).

For $[P,\bg] \colon \go^{\otimes k} \to \one$, let $f_{[P,\bg]} \colon V^{\otimes k} \to \kk$ be the $\kk$-linear map determined on the basis $\BV$ by
\[
    f_{[P,\bg]}(h_k e_{i_k} \otimes \dotsb \otimes h_1 e_{i_1})
    =
    \begin{cases}
        1 & \text{if } h_k e_{i_k} \otimes \dotsb \otimes h_1 e_{i_1} \in O_{[P,\bg]}, \\
        0 & \text{otherwise}.
    \end{cases}
\]
The following lemma is now immediate.

\begin{lem} \label{fan}
    The set of all $f_{[P,\bg]}$ with $[P,\bg] \colon \go^{\otimes k} \to \one$ having at most $n$ parts is a basis for $\Hom_{G_n}(V^{\otimes k}, \mathbf{1}_n)$.
\end{lem}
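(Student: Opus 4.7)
The plan is to exploit the identification between $G_n$-equivariant maps to the trivial module and $G_n$-invariant linear functionals, and then read off the basis from the orbit decomposition of $\BV$ that has already been established in the discussion preceding the lemma.

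First I would observe that, since $\mathbf{1}_n$ is the trivial one-dimensional $A_n$-module, we have a canonical $\kk$-linear isomorphism
\[
    \Hom_{G_n}(V^{\otimes k},\mathbf{1}_n) \xrightarrow{\cong} \left(V^{\otimes k}\right)^*{}^{G_n},
\]
sending an equivariant homomorphism to the underlying invariant linear functional. So it suffices to exhibit a basis for the space of $G_n$-invariant elements of $(V^{\otimes k})^*$.

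Next, since $\BV$ is a basis for $V^{\otimes k}$, the dual basis $\{b^* : b \in \BV\}$ is a basis for $(V^{\otimes k})^*$. A linear functional $\sum_{b \in \BV} c_b\, b^*$ is $G_n$-invariant if and only if the coefficients $c_b$ are constant on $G_n$-orbits of $\BV$. Hence the orbit-sum functionals $\sum_{b \in O} b^*$, as $O$ ranges over the $G_n$-orbits in $\BV$, form a basis for $(V^{\otimes k})^*{}^{G_n}$.

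Finally, I would invoke the description of $G_n$-orbits given just before the lemma: the orbits on $\BV$ are precisely the sets $O_{[P,\bg]}$ for $[P,\bg] \colon \go^{\otimes k} \to \one$ having at most $n$ parts. By the definition of $f_{[P,\bg]}$, the orbit-sum $\sum_{b \in O_{[P,\bg]}} b^*$ is exactly $f_{[P,\bg]}$. Combining these observations yields the claim. There is no real obstacle; the only thing to double-check is that the orbit description has truly been justified (it follows from the formula \cref{perm2} for the $G_n$-action on $V$, together with the definition of $O_{[P,\bg]}$ in terms of coincidences among the indices $i_a$ and among the twisted group elements $h_a g_a^{-1}$).
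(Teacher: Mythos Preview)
Your argument is correct and is precisely the reasoning the paper has in mind when it declares the lemma ``immediate'': the $G_n$-equivariant maps to $\mathbf{1}_n$ are the $G_n$-invariant functionals, and since $G_n$ permutes the basis $\BV$, these are exactly the orbit sums $f_{[P,\bg]}$ indexed by the orbits $O_{[P,\bg]}$ already identified.
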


For partitions $P,Q$ of $\PS_k^l$, we write $Q \ge P$ when $Q$ is coarser than $P$.  Thus, $Q \ge P$ if and only if every part of $P$ is a subset of some part of $Q$.  It follows from \cref{forest} that
\begin{equation}
    \Phi_n([P,\bg]) = \sum_{Q \ge P} f_{[Q,\bg]}
    \quad \text{for all } [P,\bg] \colon \go^{\otimes k} \to \one.
\end{equation}
Thus, if we define a new basis $\{x_{[P,\bg]} : [P,\bg] \colon \go^{\otimes k} \to \go^{\otimes l}\}$ of $\Hom_{\Par(G,n|G|)}(\go^{\otimes k}, \go^{\otimes l})$ recursively by
\begin{equation}
    x_{[P,\bg]} = [P,\bg] - \sum_{Q > P} x_{[Q,\bg]},
\end{equation}
then a straightforward argument by induction shows that
\begin{equation} \label{ice} \textstyle
    \Phi_n(x_{[P,\bg]}) = f_{[P,\bg]}
    \quad \text{for any $G$-partition $(P,\bg)$ of type $\binom{0}{k}$}.
\end{equation}
In particular, $\Phi_n(x_{[P,\bg]}) = 0$ if $[P,\bg] \colon \go^{\otimes k} \to \one$ has more than $n$ parts.

\begin{theo} \label{kangaroo}
    \begin{enumerate}
        \item The functor $\Phi_n$ is full.

        \item The kernel of the induced map
            \[
                \Hom_{\Par(G,n|G|)}(\go^{\otimes k},\go^{\otimes l}) \to \Hom_{G_n}(V^{\otimes k}, V^{\otimes l})
            \]
            is the span of all $x_{[P,\bg]}$ with $[P,\bg] \colon \go^{\otimes k} \to \go^{\otimes l}$ having more than $n$ parts.  In particular, this map is an isomorphism if and only if $k + l \leq n$.
    \end{enumerate}
\end{theo}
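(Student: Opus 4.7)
The plan is to reduce to the case $l=0$ treated by \cref{ice,fan} via self-duality of $\go$ and $V$. The relations collected in \cref{Sydney} (especially \cref{adjunction}) show that the cups and caps give $\go$ the structure of a self-dual object in $\Par(G,n|G|)$; because $\Phi_n$ is strong monoidal, applying it to these cups and caps produces $G_n$-equivariant maps making $V$ self-dual in $A_n\md$. A direct computation using \cref{hide} identifies $\Phi_n$ of the cap with the evaluation pairing $g e_i \otimes h e_j \mapsto \delta_{g,h}\delta_{i,j}$, which is manifestly $G_n$-invariant.

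Bending the $l$ top strands of a morphism $\go^{\otimes k}\to\go^{\otimes l}$ downward with $l$ caps gives a $\kk$-linear isomorphism $\Hom_{\Par(G,n|G|)}(\go^{\otimes k},\go^{\otimes l})\xrightarrow{\cong}\Hom_{\Par(G,n|G|)}(\go^{\otimes(k+l)},\one)$ together with an analogous isomorphism on the $G_n$-side, and these fit into a square commuting with the vertical maps induced by $\Phi_n$. Along the top isomorphism, a $G$-partition diagram $[P,\bg]$ of type $\binom{l}{k}$ is sent to a diagram of type $\binom{0}{k+l}$ with the same underlying partition (after identifying the top vertices with the additional bottom vertices) and with labels transformed by the rules of \cref{ramp}; in particular, the number of parts is preserved. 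Extending the recursion $x_{[P,\bg]} = [P,\bg] - \sum_{Q > P} x_{[Q,\bg]}$ to arbitrary types, a straightforward induction on the reverse coarseness order shows that this bijection identifies the two $x$-bases.

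It therefore suffices to prove (a) and (b) when $l=0$. For surjectivity, \cref{fan} gives a basis of $\Hom_{G_n}(V^{\otimes k},\mathbf{1}_n)$ by the $f_{[P,\bg]}$ with at most $n$ parts, and each such $f_{[P,\bg]}$ equals $\Phi_n(x_{[P,\bg]})$ by \cref{ice}. For the kernel, the $x_{[P,\bg]}$ form a basis of the domain; those with more than $n$ parts are killed by $\Phi_n$ as noted immediately before the statement of the theorem, while the images of the remaining $x_{[P,\bg]}$ are linearly independent by \cref{fan}, so the kernel is exactly the span described. The final claim follows because $\PS_k^l$ has $k+l$ elements, so partitions of $\PS_k^l$ have at most $k+l$ parts, with the singleton partition realizing this bound; hence the kernel vanishes iff $k+l\le n$.

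The main obstacle is the diagrammatic bookkeeping in the second paragraph---tracking how labels transform under rotation via \cref{ramp} and checking that the coarseness-order recursion defining $x_{[P,\bg]}$ is compatible with the bijection---rather than anything conceptual. Once this is in place, (a) and (b) follow from cited results with essentially no additional work.
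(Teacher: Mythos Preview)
Your proposal is correct and follows essentially the same approach as the paper: both reduce to the case $l=0$ via the self-duality of $\go$ and $V$, set up the commuting square of adjunction isomorphisms with the maps induced by $\Phi_n$, and then invoke \cref{fan,ice} together with the observation that the adjunction preserves both the number of parts and the coarseness order (hence the $x$-bases). The paper's version is terser---it simply asserts that the adjunction isomorphisms preserve parts and partial order---whereas you spell out more of the bookkeeping with \cref{ramp}, but the argument is the same.
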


\begin{proof}
    We have a commutative diagram
    \[
        \begin{tikzcd}
            \Hom_{\Par(G,n|G|)}(\go^{\otimes k}, \go^{\otimes l}) \arrow[r] \arrow[d, "\Phi_n"'] & \Hom_{\Par(G,n|G|)}(\go^{\otimes (k+l)},\one) \arrow[d, "\Phi_n"]
            \\
            \Hom_{G_n}(V^{\otimes k}, V^{\otimes l}) \arrow[r] &
            \Hom_{G_n}(V^{\otimes (k+l)}, \mathbf{1}_n)
        \end{tikzcd}
    \]
    where the horizontal maps are adjunction isomorphisms arising from the fact that $\go$ is a self-dual object in $\Par(G,n|G|)$ and that $V$ is a self-dual object in the category of $A_n$-modules.  (We refer the reader to the proof of \cite[Th.~2.3]{Com16} for more details of these adjunctions in the special case $G = \{1\}$.  The argument is the same in the case of general $G$.)  It follows from \cref{fan,ice} that the right-hand vertical map is surjective and its kernel is the span of all $x_{[P,\bg]}$ with $[P,\bg] \colon \go^{\otimes (k+l)} \to \one$ having more than $n$ parts.  Since the adjunction isomorphisms preserve the number of parts of $G$-partitions, as well as the partial order on $G$-partitions, the result follows.
\end{proof}

When $G = \{1\}$ is the trivial group, \cref{kangaroo} reduces to \cite[Th.~2.3]{Com16}.  In general, \Cref{kangaroo} is a categorical generalization of the double centralizer property \cite[Th.~6.6]{Blo03}.  More precisely, recall the $G$-partition algebras from \cref{fort}. The functor $\Phi_n$ induces an algebra homomorphism
\[
    P_k(G,n|G|) \to \End_{G_n}(V^{\otimes k}).
\]
\Cref{kangaroo} implies that this homomorphism is surjective, and is an isomorphism when $n \ge 2k$.  When the characteristic of $\kk$ does not divide $n!|G| = |G_n|$, so that $A_n$ is semisimple, the Double Centralizer Theorem implies that $A_n$ generates $\End_{P_k(G,n|G|)}(V^{\otimes k})$.  Hence $G_n$ and $P_k(G,n|G|)$ generate the centralizers of each other in $\End_\kk(V^{\otimes k})$.

\section{The group Heisenberg category\label{sec:HeisG}}

In this section we recall a special case of the Frobenius Heisenberg category.  We are interested in the special case of central charge $-1$, where this category was first defined in \cite{RS17}.  Furthermore, we will specialize to the case where the Frobenius algebra is the group algebra of a finite group $G$.  We follow the presentation in \cite{Sav19}, referring the reader to that paper for proofs of the statements made here.

\begin{defin}
    The \emph{group Heisenberg category} $\Heis(G)$ associated to the finite group $G$ is the strict $\kk$-linear monoidal category generated by two objects $\uparrow$, $\downarrow$, and morphisms
    \begin{gather*}
        \begin{tikzpicture}[anchorbase]
            \draw[->] (0.6,0) -- (0,0.6);
            \draw[->] (0,0) -- (0.6,0.6);
        \end{tikzpicture}
        \colon \uparrow \uparrow\ \to\ \uparrow \uparrow
        , \quad
        \begin{tikzpicture}[anchorbase]
            \draw[->] (0,-0.3) -- (0,0.3);
            \token{east}{0,0}{g};
        \end{tikzpicture}
        \colon \uparrow\ \to\ \uparrow
        \ ,\quad g \in G,
        \\
        \begin{tikzpicture}[anchorbase]
            \draw[->] (0,.2) -- (0,0) arc (180:360:.3) -- (.6,.2);
        \end{tikzpicture}
        \ \colon \one \to\ \downarrow \uparrow
        , \quad
        \begin{tikzpicture}[anchorbase]
            \draw[->] (0,-.2) -- (0,0) arc (180:0:.3) -- (.6,-.2);
        \end{tikzpicture}
        \ \colon \uparrow \downarrow\ \to \one
        , \quad
        \begin{tikzpicture}[anchorbase]
            \draw[<-] (0,.2) -- (0,0) arc (180:360:.3) -- (.6,.2);
        \end{tikzpicture}
        \ \colon \one \to\ \uparrow \downarrow
        , \quad
        \begin{tikzpicture}[anchorbase]
            \draw[<-] (0,0) -- (0,.2) arc (180:0:.3) -- (.6,0);
        \end{tikzpicture}
        \ \colon \downarrow \uparrow\ \to \one,
    \end{gather*}
    subject to the relations
    \begin{gather} \label{H1}
        \begin{tikzpicture}[anchorbase]
            \draw[->] (0.3,0) \braidto (-0.3,0.6) \braidto (0.3,1.2);
            \draw[->] (-0.3,0) to[out=up,in=down] (0.3,0.6) \braidto (-0.3,1.2);
        \end{tikzpicture}
        \ =\
        \begin{tikzpicture}[anchorbase]
            \draw[->] (-0.2,0) -- (-0.2,1.2);
            \draw[->] (0.2,0) -- (0.2,1.2);
        \end{tikzpicture}
        \ ,\quad
        \begin{tikzpicture}[anchorbase]
            \draw[->] (0.4,0) -- (-0.4,1.2);
            \draw[->] (0,0) \braidto (-0.4,0.6) \braidto (0,1.2);
            \draw[->] (-0.4,0) -- (0.4,1.2);
        \end{tikzpicture}
        \ =\
        \begin{tikzpicture}[anchorbase]
            \draw[->] (0.4,0) -- (-0.4,1.2);
            \draw[->] (0,0) \braidto (0.4,0.6) \braidto (0,1.2);
            \draw[->] (-0.4,0) -- (0.4,1.2);
        \end{tikzpicture}
        \ ,\quad
        \begin{tikzpicture}[anchorbase]
            \draw[->] (0,-0.4) -- (0,0.4);
            \token{east}{0,0.15}{g};
            \token{east}{0,-0.15}{h};
        \end{tikzpicture}
        \ =\
        \begin{tikzpicture}[anchorbase]
            \draw[->] (0,-0.4) -- (0,0.4);
            \token{west}{0,0}{gh};
        \end{tikzpicture}
        \ ,\quad
        \begin{tikzpicture}[anchorbase]
            \draw[->] (0,-0.4) -- (0,0.4);
            \token{west}{0,0}{1};
        \end{tikzpicture}
        \ =\
        \begin{tikzpicture}[anchorbase]
            \draw[->] (0,-0.4) -- (0,0.4);
        \end{tikzpicture}
        \ ,\quad
        \begin{tikzpicture}[anchorbase]
            \draw[->] (-0.3,-0.3) -- (0.3,0.3);
            \draw[->] (0.3,-0.3) -- (-0.3,0.3);
            \token{east}{-0.15,-0.15}{g};
        \end{tikzpicture}
        \ =\
        \begin{tikzpicture}[anchorbase]
            \draw[->] (-0.3,-0.3) -- (0.3,0.3);
            \draw[->] (0.3,-0.3) -- (-0.3,0.3);
            \token{north west}{0.125,0.125}{g};
        \end{tikzpicture}
        \ ,
        \\ \label{H2}
        \begin{tikzpicture}[anchorbase]
            \draw[->] (0,0) -- (0,0.6) arc(180:0:0.2) -- (0.4,0.4) arc(180:360:0.2) -- (0.8,1);
        \end{tikzpicture}
        \ =\
        \begin{tikzpicture}[anchorbase]
            \draw[->] (0,0) -- (0,1);
        \end{tikzpicture}
        \ ,\qquad
        \begin{tikzpicture}[anchorbase]
            \draw[->] (0,1) -- (0,0.4) arc(180:360:0.2) -- (0.4,0.6) arc(180:0:0.2) -- (0.8,0);
        \end{tikzpicture}
        \ =\
        \begin{tikzpicture}[anchorbase]
            \draw[<-] (0,0) -- (0,1);
        \end{tikzpicture}
        \ ,
        \\ \label{H3}
        \begin{tikzpicture}[anchorbase]
            \draw[->] (-0.3,-0.6) \braidto (0.3,0) \braidto (-0.3,0.6);
            \draw[<-] (0.3,-0.6) \braidto (-0.3,0) \braidto (0.3,0.6);
        \end{tikzpicture}
        \ =\
        \begin{tikzpicture}[anchorbase]
            \draw[->] (-0.2,-0.6) to (-0.2,0.6);
            \draw[<-] (0.2,-0.6) to (0.2,0.6);
        \end{tikzpicture}
        \ ,\quad
        \begin{tikzpicture}[anchorbase]
            \draw[<-] (-0.3,-0.6) \braidto (0.3,0) \braidto (-0.3,0.6);
            \draw[->] (0.3,-0.6) \braidto (-0.3,0) \braidto (0.3,0.6);
        \end{tikzpicture}
        \ =\
        \begin{tikzpicture}[anchorbase]
            \draw[<-] (-0.2,-0.6) to (-0.2,0.6);
            \draw[->] (0.2,-0.6) to (0.2,0.6);
        \end{tikzpicture}
        - \sum_{g \in G}
        \begin{tikzpicture}[anchorbase]
            \draw[<-] (-0.3,-0.6) to (-0.3,-0.4) arc(180:0:0.3) to (0.3,-0.6);
            \draw[->] (-0.3,0.6) to (-0.3,0.4) arc(-180:0:0.3) to (0.3,0.6);
            \token{west}{0.3,0.4}{g};
            \token{west}{0.3,-0.4}{g^{-1}};
        \end{tikzpicture}
        \ ,\quad
        \begin{tikzpicture}[anchorbase]
            \draw[<-] (0,0.6) to (0,0.3);
            \draw (-0.3,-0.2) to [out=180,in=-90](-.5,0);
            \draw (-0.5,0) to [out=90,in=180](-.3,0.2);
            \draw (-0.3,.2) to [out=0,in=90](0,-0.3);
            \draw (0,-0.3) to (0,-0.6);
            \draw (0,0.3) to [out=-90,in=0] (-.3,-0.2);
        \end{tikzpicture}
        \ = 0,
        \quad
        \begin{tikzpicture}[anchorbase]
            \draw[->] (0,0.3) arc(90:450:0.3);
            \token{west}{0.3,0}{g};
        \end{tikzpicture}
        = \delta_{g,1} 1_\one.
    \end{gather}
    Here the left and right crossings are defined by
    \[
        \begin{tikzpicture}[anchorbase]
            \draw[<-] (0,0) -- (0.6,0.6);
            \draw[->] (0.6,0) -- (0,0.6);
        \end{tikzpicture}
        \ :=\
        \begin{tikzpicture}[anchorbase]
            \draw[->] (-0.2,-0.3) to (0.2,0.3);
            \draw[<-] (-0.6,-0.3) to[out=up,in=135,looseness=2] (0,0) to[out=-45,in=down,looseness=2] (0.6,0.3);
        \end{tikzpicture}
        \ ,\qquad
        \begin{tikzpicture}[anchorbase]
            \draw[->] (0,0) -- (0.6,0.6);
            \draw[<-] (0.6,0) -- (0,0.6);
        \end{tikzpicture}
        \ :=\
        \begin{tikzpicture}[anchorbase]
            \draw[->] (0.2,-0.3) to (-0.2,0.3);
            \draw[<-] (0.6,-0.3) to[out=up,in=45,looseness=2] (0,0) to[out=225,in=down,looseness=2] (-0.6,0.3);
        \end{tikzpicture}
        \ .
    \]
\end{defin}

The objects $\uparrow$ and $\downarrow$ are both left and right dual to each other.  Furthermore, the cups and caps endow $\Heis(G)$ with the structure of a strict pivotal category, meaning that morphisms are invariant under isotopy.  We define downwards crossings and downward tokens by
\begin{equation} \label{yoga}
    \begin{tikzpicture}[anchorbase]
        \draw[<-] (0,0) -- (0.6,0.6);
        \draw[<-] (0.6,0) -- (0,0.6);
    \end{tikzpicture}
    \ :=\
    \begin{tikzpicture}[anchorbase]
        \draw[<-] (-0.2,-0.3) to (0.2,0.3);
        \draw[<-] (-0.6,-0.3) to[out=up,in=135,looseness=2] (0,0) to[out=-45,in=down,looseness=2] (0.6,0.3);
    \end{tikzpicture}
    \ =\
    \begin{tikzpicture}[anchorbase]
        \draw[<-] (0.2,-0.3) to (-0.2,0.3);
        \draw[<-] (0.6,-0.3) to[out=up,in=45,looseness=2] (0,0) to[out=225,in=down,looseness=2] (-0.6,0.3);
    \end{tikzpicture}
    \ ,\quad
    \begin{tikzpicture}[anchorbase]
        \draw[<-] (0,0) -- (0,1);
        \token{east}{0,0.5}{g};
    \end{tikzpicture}
    \ :=\
    \begin{tikzpicture}[anchorbase]
        \draw[<-] (0,0) -- (0,0.6) arc(180:0:0.2) -- (0.4,0.4) arc(180:360:0.2) -- (0.8,1);
        \token{east}{0.4,0.5}{g};
    \end{tikzpicture}
    \ =\
    \begin{tikzpicture}[anchorbase]
        \draw[->] (0,1) -- (0,0.4) arc(180:360:0.2) -- (0.4,0.6) arc(180:0:0.2) -- (0.8,0);
        \token{east}{0.4,0.5}{g};
    \end{tikzpicture}
    \ .
\end{equation}
It follows that,
\begin{equation} \label{genbraid}
  \begin{tikzpicture}[anchorbase]
    \draw (0.4,0) -- (-0.4,1.2);
    \draw (0,0) \braidto (-0.4,0.6) \braidto (0,1.2);
    \draw (-0.4,0) -- (0.4,1.2);
  \end{tikzpicture}
  \ =\
  \begin{tikzpicture}[anchorbase]
    \draw (0.4,0) -- (-0.4,1.2);
    \draw (0,0) \braidto (0.4,0.6) \braidto (0,1.2);
    \draw (-0.4,0) -- (0.4,1.2);
  \end{tikzpicture}
  \quad \text{for all possible orientations of the strands,}
\end{equation}
and we that tokens slide over cups and caps:
\begin{equation} \label{cct}
    \begin{tikzpicture}[anchorbase]
        \draw[->] (0,0) -- (0,-0.3) arc (180:360:.3) -- (0.6,0);
        \token{east}{0,-0.3}{g};
    \end{tikzpicture}
    \ =\
    \begin{tikzpicture}[anchorbase]
        \draw[->] (0,0) -- (0,-0.3) arc (180:360:.3) -- (0.6,0);
        \token{west}{0.6,-0.3}{g};
    \end{tikzpicture}
    \ , \quad
    \begin{tikzpicture}[anchorbase]
        \draw[->] (0,0) -- (0,0.3) arc (180:0:.3) -- (0.6,0);
        \token{east}{0,0.3}{g};
    \end{tikzpicture}
    \ = \
    \begin{tikzpicture}[anchorbase]
        \draw[->] (0,0) -- (0,0.3) arc (180:0:.3) -- (0.6,0);
        \token{west}{0.6,0.3}{g};
    \end{tikzpicture}
    \ ,\quad
    \begin{tikzpicture}[anchorbase]
        \draw[<-] (0,0) -- (0,0.3) arc (180:0:.3) -- (0.6,0);
        \token{east}{0.0,0.3}{g};
    \end{tikzpicture}
    \ = \
    \begin{tikzpicture}[anchorbase]
        \draw[<-] (0,0) -- (0,0.3) arc (180:0:.3) -- (0.6,0);
        \token{west}{0.6,0.3}{g};
    \end{tikzpicture}
    \ ,\quad
    \begin{tikzpicture}[anchorbase]
        \draw[<-] (0,0) -- (0,-0.3) arc (180:360:.3) -- (0.6,0);
        \token{west}{0.6,-0.3}{g};
    \end{tikzpicture}
    \ = \
    \begin{tikzpicture}[anchorbase]
        \draw[<-] (0,0) -- (0,-0.3) arc (180:360:.3) -- (0.6,0);
        \token{east}{0.0,-0.3}{g};
    \end{tikzpicture}
    \ .
\end{equation}
(One should compare this to the first two relations in \cref{ramp}.)  Because of this, we will sometimes place tokens at the critical point of cups and caps, since there is no ambiguity.  In addition it follows from \cref{cct,H1,H2} that
\begin{equation} \label{slush}
    \begin{tikzpicture}[anchorbase]
        \draw (-0.3,-0.3) -- (0.3,0.3);
        \draw (0.3,-0.3) -- (-0.3,0.3);
        \token{east}{-0.15,-0.15}{g};
    \end{tikzpicture}
    =
    \begin{tikzpicture}[anchorbase]
        \draw (-0.3,-0.3) -- (0.3,0.3);
        \draw (0.3,-0.3) -- (-0.3,0.3);
        \token{west}{0.15,0.15}{g};
    \end{tikzpicture}
    \ , \quad
    \begin{tikzpicture}[anchorbase]
        \draw (-0.3,-0.3) -- (0.3,0.3);
        \draw (0.3,-0.3) -- (-0.3,0.3);
        \token{west}{0.15,-0.15}{g};
    \end{tikzpicture}
    =
    \begin{tikzpicture}[anchorbase]
        \draw (-0.3,-0.3) -- (0.3,0.3);
        \draw (0.3,-0.3) -- (-0.3,0.3);
        \token{east}{-0.15,0.15}{g};
    \end{tikzpicture}
    \quad \text{for all possible orientations of the strands.}
\end{equation}
We will use \cref{cct,slush} frequently without mention.  Note how tokens multiply on downward strands (using \cref{yoga,cct,H1}):
\begin{equation}
    \begin{tikzpicture}[anchorbase]
        \draw[<-] (0,-0.4) -- (0,0.4);
        \token{east}{0,0.15}{g};
        \token{east}{0,-0.15}{h};
    \end{tikzpicture}
    \ =\
    \begin{tikzpicture}[anchorbase]
        \draw[<-] (0,-0.4) -- (0,0.4);
        \token{west}{0,0}{hg};
    \end{tikzpicture}
    \ .
\end{equation}

\section{The embedding functor\label{sec:embed}}

In this section we define an explicit embedding of the group partition category into the group Heisenberg category.  We assume throughout this section that $G$ is a finite group.

\begin{theo} \label{hitchcock}
    There is a faithful strict $\kk$-linear monoidal functor $\Psi \colon \Par(G) \to \Heis(G)$ defined on objects by $\go \mapsto\ \uparrow \otimes \downarrow$ and on generating morphisms by
    \begin{gather*}
        \merge
        \mapsto
        \begin{tikzpicture}[anchorbase]
            \draw[->] (0,0) \braidto (0.33,1);
            \draw[<-] (1,0) \braidto (0.67,1);
            \draw[<-] (0.33,0) to (0.33,0.1) to[out=up,in=up,looseness=2] (0.67,0.1) to (0.67,0);
        \end{tikzpicture}
        \ ,\qquad
        \spliter
        \mapsto
        \begin{tikzpicture}[anchorbase]
            \draw[->] (0.33,0) \braidto (0,1);
            \draw[<-] (0.67,0) \braidto (1,1);
            \draw[->] (0.33,1) to (0.33,0.9) to[out=down,in=down,looseness=2] (0.67,0.9) to (0.67,1);
        \end{tikzpicture}
        \ ,\qquad
        \crossing
        \mapsto
        \begin{tikzpicture}[anchorbase]
            \draw[->] (0,0) \braidto (1,1);
            \draw[<-] (0.5,0) \braidto (1.5,1);
            \draw[->] (1,0) \braidto (0,1);
            \draw[<-] (1.5,0) \braidto (0.5,1);
        \end{tikzpicture}
        + \sum_{g \in G}
        \begin{tikzpicture}[anchorbase]
            \draw[->] (-0.5,-0.6) -- (-0.5,0.6);
            \draw[->] (-0.3,0.6) -- (-0.3,0.5) arc(180:360:0.3) -- (0.3,0.6);
            \draw[<-] (-0.3,-0.6) -- (-0.3,-0.5) arc(180:0:0.3) -- (0.3,-0.6);
            \draw[<-] (0.5,-0.6) -- (0.5,0.6);
            \token{south}{0,0.2}{g};
            \token{north}{0,-0.2}{g};
            \token{east}{-0.5,0}{g^{-1}};
            \token{west}{0.5,0}{g^{-1}};
        \end{tikzpicture}
        \ ,
        \\
        \bottompin
        \mapsto
        \begin{tikzpicture}[anchorbase]
            \draw[<-] (0,1) -- (0,0.9) arc (180:360:.25) -- (0.5,1);
        \end{tikzpicture}
        \ ,\qquad
        \toppin
        \mapsto
        \begin{tikzpicture}[anchorbase]
            \draw[->] (0,0) -- (0,0.1) arc (180:0:.25) -- (0.5,0);
        \end{tikzpicture}
        \ ,\qquad
        \tokstrand
        \mapsto
        \begin{tikzpicture}[anchorbase]
            \draw[->] (-0.2,-0.2) -- (-0.2,0.2);
            \draw[<-] (0.2,-0.2) -- (0.2,0.2);
            \token{east}{-0.2,0}{g};
            \token{west}{0.2,0}{g^{-1}};
        \end{tikzpicture}
        \ , \quad g \in G.
  \end{gather*}
\end{theo}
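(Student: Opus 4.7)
The plan is two-fold: first verify that $\Psi$ respects every defining relation of $\Par(G)$, so that $\Psi$ is well defined, and then deduce faithfulness from the categorical action of \Cref{hide}. For well-definedness, I would check the relations \cref{GPC1,GPC2,GPC3,GPC4,GPC5} in turn. Since $\uparrow\otimes\downarrow$ is self-dual in the pivotal category $\Heis(G)$, the Frobenius relations \cref{GPC1} reduce to planar isotopies combined with the adjunction axioms \cref{H2}: the images of the merge, split, cap, and cup are built from adjunction cups and caps, so each Frobenius axiom becomes a straightening of a planar string diagram. The relations \cref{GPC3} follow from the naturality of cups and caps with respect to the $\Heis(G)$ crossing, and \cref{GPC5} transcribes directly from the token relations in $\Heis(G)$ together with the sliding rules \cref{cct,slush}. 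The symmetric-monoidal axioms \cref{GPC2} for $\Psi(\crossing)$ require expanding the defining sum and applying the second relation in \cref{H3}---the $\uparrow\downarrow \leftrightarrow \downarrow\uparrow$ transposition carrying the sum over $G$---to match the correction terms built into $\Psi(\crossing)$.

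The delicate step is the second identity in \cref{GPC4}, the labelled speciality of the Frobenius object. Under $\Psi$, the left-hand side becomes a composition of a split, two labelled tokens, and a merge that creates a closed inner loop in $\Heis(G)$ carrying a token depending on $g$ and $h$. The bubble relation---the last equation in \cref{H3}---evaluates this loop to $\delta_{g,h}$, after which \cref{H1} and \cref{H2} collapse the remainder to a single labelled strand, as required.

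For faithfulness, the plan is to factor the action $\Phi_n \colon \Par(G,n|G|) \to A_n\md$ of \Cref{hide} through $\Psi$. Concretely, $\Heis(G)$ acts on $\bigoplus_{n \ge 0} A_n\md$ by induction and restriction along the tower $A_{n-1} \hookrightarrow A_n$, with $\uparrow$ corresponding to $B \otimes_{n-1} -$ and $\downarrow$ to its biadjoint $\Res$. Via the isomorphism $\beta_k$ of \Cref{definbeta}, one checks on generators that the composition of $\Psi$ with this action recovers $\Phi_n$ on the full subcategory generated by the objects $V^{\otimes k}$; this compatibility is precisely the content of \Cref{wactcom}. Granting it, any morphism $f \in \Hom_{\Par(G)}(\go^{\otimes k},\go^{\otimes l})$ with $\Psi(f)=0$ satisfies $\Phi_n(f) = 0$ for every $n$, and \Cref{kangaroo}(b) then forces $f = 0$ as soon as $n \ge k+l$.

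The principal obstacle is the speciality check combined with the braid verification for $\Psi(\crossing)$: both involve careful bookkeeping of sums over $G$ and a crucial use of the Heisenberg bubble relation. Faithfulness itself is essentially formal once the action compatibility with $\Phi_n$ is established.
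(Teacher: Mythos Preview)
Your outline for well-definedness is broadly correct and parallels \cref{poker}, though you understate the work needed for \cref{GPC3}: since $\Psi(\crossing)$ carries the extra sum over $G$, verifying for instance $\Psi(\crossing)\circ\Psi(1_\go\otimes\bottompin)=\Psi(\bottompin\otimes 1_\go)$ requires showing that this correction term vanishes (via the left-curl relation in \cref{H3}), not merely naturality of cups and caps.

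The faithfulness argument, however, has a genuine gap. \Cref{kangaroo}(b) is a statement about $\Par(G,n|G|)$, not about $\Par(G)$: it says that $\Phi_n$ is injective on $\Hom_{\Par(G,n|G|)}(\go^{\otimes k},\go^{\otimes l})$ once $n\ge k+l$. But the natural map $\Hom_{\Par(G)}(\go^{\otimes k},\go^{\otimes l})\to\Hom_{\Par(G,n|G|)}(\go^{\otimes k},\go^{\otimes l})$, given by specializing $\lolly\mapsto n|G|$, is \emph{never} injective: the source is a free $\kk[\lolly]$-module (\cref{gummies}), while the target has finite $\kk$-rank. So from $\Psi(f)=0$ you only learn that the image of $f$ in $\Par(G,n|G|)$ vanishes for every $n$. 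Writing $f=\sum_i p_i(\lolly)[P_i,\bg_i]$ with $p_i\in\kk[\lolly]$, this says $p_i(n|G|)=0$ for all $n\ge k+l$. Over a field of characteristic zero that forces each $p_i=0$, but the theorem is stated over an arbitrary commutative ring $\kk$; if $\kk$ is finite, or if $\operatorname{char}\kk$ divides $|G|$, the values $n|G|$ take only finitely many (possibly one) distinct values and the argument collapses.

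The paper avoids this by a different route: it works directly inside $\Heis(G)$, using the explicit basis of its morphism spaces from \cite[Thm.~7.2]{BSW20}. One introduces a \emph{block number} filtration on $\Hom_{\Heis(G)}((\uparrow\downarrow)^{\otimes k},(\uparrow\downarrow)^{\otimes l})$ and shows (\cref{towel}) that $\Psi([P,\bg])$ agrees, modulo lower block number, with an explicit basis element determined by $[P,\bg]$. A triangularity argument then gives faithfulness over any $\kk$, without ever passing through the representation-theoretic functors $\Phi_n$.
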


The proof of \cref{hitchcock} will occupy the remainder of this section.  We break the proof into two parts, first showing that $\Psi$ is well defined, and then that it is faithful.

\begin{prop} \label{poker}
    The functor $\Psi$ is well defined.
\end{prop}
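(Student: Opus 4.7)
The plan is to verify, one family at a time, that each of the defining relations \cref{GPC1,GPC2,GPC3,GPC4,GPC5} is sent by $\Psi$ to a valid equality in $\Heis(G)$. Since $\Psi(\go) = \uparrow \otimes \downarrow$, every strand in a $\Par(G)$ diagram is replaced by one up-strand paired with one down-strand, and the images of merge and split are essentially nested caps and cups. Using \cref{H2} (the zig-zag identities) together with \cref{cct}, the relations \cref{GPC1} will reduce to a sequence of zig-zag straightenings: the three-strand Frobenius and unit relations all become routine isotopy arguments once one draws the pictures, since pins go to pure cups or caps.

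For \cref{GPC5} I would start with the easy ones: the token multiplication and identity-token relations reduce to the third and fourth relations in \cref{H1} applied separately to the up and down components, remembering that on the downward strand tokens multiply in the reverse order (as pointed out in the paper right before the (commented-out) teleporter discussion), which is exactly compensated by the fact that $\Psi(\tokstrand)$ carries $g$ on the up-strand and $g^{-1}$ on the down-strand. The token-through-merge/split relation (fourth in \cref{GPC5}) is immediate from the images, and the token-on-pin relation (fifth) follows since a cup/cap with a $g$ on one leg equals the cup/cap with $g^{-1}$ on the other by \cref{cct}, and opposite labels $g$ on the up-strand and $g^{-1}$ on the down-strand collapse via the zig-zag.

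The main obstacle is \cref{GPC2} and the parts of \cref{GPC3} that involve the crossing, because $\Psi(\crossing)$ is not a single crossing but a sum of a ``double crossing'' term and a correction term summed over $g \in G$. To handle these I plan to first introduce the teleporter notation $\teleport{0,0}{0.3,0}\ := \sum_g g \otimes g^{-1}$ tokens (implicit in the commented code of the paper), so that $\Psi(\crossing)$ can be written compactly as a double crossing plus a teleporter-correction. Then the second relation in \cref{H3} (the ``inversion'' relation for the mixed crossing) is precisely the input that makes the symmetry relation $\Psi(\crossing) \circ \Psi(\crossing) = \id$ and the braid relation work: when one squares the image of $\crossing$, the double-crossing squared gives the identity by the first relation in \cref{H3}, and the four cross terms collapse to cancel the square of the correction term by \cref{H3} (second relation) after sliding tokens via \cref{cct,slush} and using the token-inversion-around-a-cap identity. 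The braid relation is similar but bookkeeping-heavier; my intention is to use adjunction to rewrite everything with all strands upward, apply the braid relation from \cref{H1} together with \cref{genbraid}, and check that the teleporter corrections on each side match term by term using token sliding.

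For \cref{GPC3} the relations involving a crossing composed with a pin, a merge, or a split reduce, after expanding $\Psi(\crossing)$, to showing that the correction term vanishes (a teleporter with an endpoint capped off gives $\sum_{g\in G}\delta_{g,1}$-type sums that collapse by the bubble relation, fourth in \cref{H3}) and the remaining double-crossing term is compatible with the image of merge/split by isotopy. Finally, \cref{GPC4}: commutativity of merge (first relation) follows by isotopy since the image of $\merge$ is symmetric in its two inputs; the special relation (second) requires both terms of $\Psi(\crossing)$ to contribute and is where the $\delta_{g,h}$ emerges from the second relation in \cref{H1}, specifically the token-merging rule $\tokstrand[g] \tokstrand[h^{-1}] = \tokstrand[gh^{-1}]$ combined with the bubble evaluation $\delta_{g,1}$. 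Once these five families are checked, $\Psi$ is well defined.
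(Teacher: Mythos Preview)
Your overall strategy matches the paper's: verify each relation family by expanding $\Psi(\crossing)$ into its two summands and simplifying with the Heisenberg relations. But several of your predicted mechanisms are wrong, and fixing them is the actual content of the computation.

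The clearest problem is the first relation in \cref{GPC4}. You say it follows by isotopy since $\Psi(\merge)$ is symmetric in its two inputs, but that is not what happens. When you compose $\Psi(\merge)$ with the full four-strand-crossing summand of $\Psi(\crossing)$, the middle cap of $\Psi(\merge)$ meets the crossing to produce a left curl, which is zero by the third relation in \cref{H3}. So the double-crossing summand \emph{vanishes}; it is the correction summand that survives, and its cap--cup structure closes into a clockwise bubble labelled $g$, which the fourth relation in \cref{H3} evaluates to $\delta_{g,1_G}$, collapsing the sum back to $\Psi(\merge)$.

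The same left-curl phenomenon governs \cref{GPC2}. In $\Psi(\crossing)^2$ the two cross terms each contain a left curl and vanish outright; they do not ``cancel the square of the correction term''. The correction-squared term contains a clockwise bubble evaluating to $\delta_{hg,1_G}$, leaving a single sum over $g$; meanwhile the full-crossing-squared is not the identity on the nose, because its inner $\downarrow\uparrow$ double crossing is resolved by the \emph{second} relation in \cref{H3} and therefore contributes an extra sum that cancels the residual correction-squared. Likewise, in the pin relations of \cref{GPC3} the correction summand does not vanish by a bubble evaluation (its tokens sit on open strands, not on closed loops); instead the full-crossing summand, when straightened, throws off a correction via the second relation in \cref{H3} that absorbs it. Once you redo the bookkeeping with the left-curl relation as the primary tool rather than an afterthought, the verifications go through exactly as the paper records them.
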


\begin{proof}
    It suffices to show that the images of the generating morphisms of the partition category $\Par(G)$ satisfy relations \cref{GPC1,GPC2,GPC3,GPC4,GPC5}.

    \medskip

    \noindent \emph{Relations \cref{GPC1}}: These relations are easy to check, using isotopy invariance in $\Heis(G)$.

    \medskip

    \noindent \emph{Relations \cref{GPC2}}:  Using the third relation in \cref{H3}, which says that left curls are equal to zero, we have
    \begin{multline*}
        \Psi \left( \crossing \right) \circ \Psi \left( \crossing \right)
        =
        \begin{tikzpicture}[anchorbase]
            \draw[->] (0,0) \braidto (1,1) \braidto (0,2);
            \draw[<-] (0.5,0) \braidto (1.5,1) \braidto (0.5,2);
            \draw[->] (1,0) \braidto (0,1) \braidto (1,2);
            \draw[<-] (1.5,0) \braidto (0.5,1) \braidto (1.5,2);
        \end{tikzpicture}
        \ + \sum_{g,h \in G}
        \begin{tikzpicture}[anchorbase]
            \draw[->] (-0.8,-1) to (-0.8,1);
            \draw[<-] (0.8,-1) to (0.8,1);
            \draw[<-] (-0.3,-1) to (-0.3,-0.9) arc(180:0:0.3) to (0.3,-1);
            \draw[->] (-0.3,1) to (-0.3,0.9) arc(180:360:0.3) to (0.3,1);
            \draw[->] (0.3,0) arc(0:360:0.3);
            \token{east}{-0.8,0}{h^{-1}g^{-1}};
            \token{west}{0.8,0}{g^{-1} h^{-1}};
            \token{south}{0,0.6}{h};
            \token{north}{0,-0.6}{g};
            \token{west}{-0.3,0}{hg};
        \end{tikzpicture}
        \\
        \overset{\cref{H3}}{=}
        \begin{tikzpicture}[anchorbase]
            \draw[->] (0,0) to (0,1.5);
            \draw[<-] (0.5,0) \braidto (1,0.75) \braidto (0.5,1.5);
            \draw[->] (1,0) \braidto (0.5,0.75) \braidto (1,1.5);
            \draw[<-] (1.5,0) to (1.5,1.5);
        \end{tikzpicture}
        \ + \
        \sum_{g \in G}\
        \begin{tikzpicture}[anchorbase]
            \draw[->] (-0.8,-0.75) to (-0.8,0.75);
            \draw[<-] (0.8,-0.75) to (0.8,0.75);
            \draw[<-] (-0.3,-0.75) to (-0.3,-0.65) arc(180:0:0.3) to (0.3,-0.75);
            \draw[->] (-0.3,0.75) to (-0.3,0.65) arc(180:360:0.3) to (0.3,0.75);
            \token{south}{0,0.35}{g};
            \token{south}{0,-0.35}{g^{-1}};
        \end{tikzpicture}
        \overset{\cref{H3}}{=}
        \begin{tikzpicture}[anchorbase]
            \draw[->] (0,0) to (0,1.5);
            \draw[<-] (0.5,0) to (0.5,1.5);
            \draw[->] (1,0) to (1,1.5);
            \draw[<-] (1.5,0) to (1.5,1.5);
        \end{tikzpicture}
        = \Psi \left( \, \idstrand \ \idstrand \, \right),
    \end{multline*}
    proving the first relation in \cref{GPC2}.  To prove the second relation in \cref{GPC2}, we compute
    \begin{multline} \label{eggs}
        \Psi \left( \, \idstrand \otimes \crossing \right) \circ \Psi \left( \crossing \otimes \idstrand \, \right)
        =
        \begin{tikzpicture}[anchorbase]
            \draw[->] (0,0) to[out=up,in=240] (2,1);
            \draw[<-] (0.5,0) to[out=60,in=down] (2.5,1);
            \draw[->] (1,0) \braidto (0,1);
            \draw[<-] (1.5,0) \braidto (0.5,1);
            \draw[->] (2,0) \braidto (1,1);
            \draw[<-] (2.5,0) \braidto (1.5,1);
        \end{tikzpicture}
        \ + \sum_{g\in G}
        \begin{tikzpicture}[anchorbase]
            \draw[->] (0,0) to (0,1);
            \draw[<-] (0.5,0) to (0.5,0.1) arc(180:0:0.25) to (1,0);
            \draw[<-] (1.5,0) \braidto (2.5,0.8) to (2.5,1);
            \draw[->] (2,0) \braidto (1,1);
            \draw[<-] (2.5,0) \braidto (1.5,1);
            \draw[->] (0.5,1) to[out=down,in=down] (2,1);
            \token{east}{0,0.5}{g^{-1}};
            \token{north}{0.75,0.35}{g};
            \token{east}{0.57,0.8}{g};
            \token{west}{2.5,0.8}{g^{-1}};
        \end{tikzpicture}
        \\
        + \sum_{g \in G}\
        \begin{tikzpicture}[anchorbase]
            \draw[->] (0,0) -- (0,0.2) \braidto (1,1);
            \draw[<-] (0.5,0) to[out=up,in=up] (2,0);
            \draw[->] (1,0) \braidto (0,1);
            \draw[<-] (1.5,0) \braidto (0.5,1);
            \draw[<-] (2.5,0) -- (2.5,1);
            \draw[->] (1.5,1) -- (1.5,0.9) arc(180:360:0.25) -- (2,1);
            \token{east}{0,0.2}{g^{-1}};
            \token{west}{1.93,0.2}{g};
            \token{south}{1.75,0.65}{g};
            \token{west}{2.5,0.5}{g^{-1}};
        \end{tikzpicture}
        \ + \sum_{g,h \in G}
        \begin{tikzpicture}[anchorbase]
            \draw[->] (0,0) to (0,1);
            \draw[<-] (0.5,0) to (0.5,0.1) arc(180:0:0.25) to (1,0);
            \draw[<-] (1.5,0) to (1.5,0.1) arc(180:0:0.25) to (2,0);
            \draw[<-] (2.5,0) to (2.5,1);
            \draw[->] (0.5,1) to (0.5,0.9) arc(180:360:0.25) to (1,1);
            \draw[->] (1.5,1) to (1.5,0.9) arc(180:360:0.25) to (2,1);
            \token{east}{0,0.5}{g^{-1}};
            \token{south}{0.5,0.9}{h^{-1}g};
            \token{north}{0.75,0.35}{g};
            \token{north}{2,0.1}{g^{-1}h};
            \token{south}{1.75,0.65}{h};
            \token{west}{2.5,0.5}{h^{-1}};
        \end{tikzpicture}
        \ .
    \end{multline}
    Therefore, using the fact that left curls are zero, we compute
    \begin{align*}
        \Psi &\left( \, \idstrand \otimes \crossing \right) \circ \Psi \left( \crossing \otimes \idstrand \, \right) \circ \Psi \left( \, \idstrand \otimes \crossing \right)
        \\
        &\underset{\cref{slush}}{\overset{\cref{H3}}{=}}
        \begin{tikzpicture}[anchorbase]
            \draw[->] (0,-0.75) to[out=up,in=240] (2,0.75);
            \draw[<-] (0.5,-0.75) to[out=60,in=down] (2.5,0.75);
            \draw[->] (1,-0.75) \braidto (2.1,0) \braidto (1,0.75);
            \draw[<-] (1.5,-0.75) to[out=60,in=down] (2.5,0) to[out=up,in=-60] (1.5,0.75);
            \draw[->] (2,-0.75) to[out=120,in=down] (0,0.75);
            \draw[<-] (2.5,-0.75) to[out=up,in=-60] (0.5,0.75);
        \end{tikzpicture}
        + \sum_{g\in G}
        \begin{tikzpicture}[anchorbase]
            \draw[->] (0,-0.75) to (0,0.75);
            \draw[<-] (0.5,-0.75) to[out=up,in=up,looseness=0.8] (2,-0.75);
            \draw[->] (1,-0.75) \braidto (1.75,0) \braidto (1,0.75);
            \draw[<-] (1.5,-0.75) \braidto (2.25,0) \braidto (1.5,0.75);
            \draw[<-] (2.5,-0.75) to[out=up,in=down,looseness=0.8] (1.25,0) to[out=up,in=down,looseness=0.8] (2.5,0.75);
            \draw[->] (0.5,0.75) to[out=down,in=down,looseness=0.8] (2,0.75);
            \token{east}{0,0}{g^{-1}};
            \token{south}{0.8,-0.45}{g};
            \token{north}{0.8,0.45}{g};
            \token{west}{2.4,0.57}{g^{-1}};
        \end{tikzpicture}
        + \sum_{g \in G}
        \begin{tikzpicture}[anchorbase]
            \draw[->] (0,-0.75) to (0,-0.55) \braidto (1,0.75);
            \draw[<-] (1.5,-0.75) \braidto (2.5,0.55) to (2.5,0.75);
            \draw[->] (2,-0.75) \braidto (0,0.75);
            \draw[<-] (2.5,-0.75) \braidto (0.5,0.75);
            \draw[->] (1.5,0.75) to (1.5,0.65) to[out=down,in=down,looseness=1.5] (2,0.65) to (2,0.75);
            \draw[->] (1,-0.75) \braidto (1.7,0.1) to[out=up,in=up,looseness=1.5] (1.2,0.1) to[out=down,in=up] (0.5,-0.75);
            \token{east}{0,-0.55}{g^{-1}};
            \token{south}{0.65,-0.45}{g};
            \token{south}{1.75,0.45}{g};
            \token{west}{2.5,0.55}{g^{-1}};
        \end{tikzpicture}
        \\
        &\qquad\qquad\qquad\qquad + \sum_{g \in G}
        \begin{tikzpicture}[anchorbase]
            \draw[->] (0,-0.75) to[out=up,in=240] (2,0.75);
            \draw[<-] (0.5,-0.75) to[out=60,in=down] (2.5,0.75);
            \draw[->] (1,-0.75) \braidto (0,0.75);
            \draw[<-] (1.5,-0.75) to (1.5,-0.65) arc(180:0:0.25) to (2,-0.75);
            \draw[<-] (2.5,-0.75) \braidto (1.5,0.75);
            \draw[->] (0.5,0.75) to (0.5,0.65) arc(180:360:0.25) to (1,0.75);
            \token{east}{0.25,0.2}{g^{-1}};
            \token{north}{1.75,-0.4}{g};
            \token{south}{0.75,0.4}{g};
            \token{west}{2.25,-0.2}{g^{-1}};
        \end{tikzpicture}
        + \sum_{g,h,t\in G}
        \begin{tikzpicture}[anchorbase]
            \draw[->] (0,-0.75) to (0,0.75);
            \draw[<-] (0.5,-0.75) to (0.5,-0.6) arc(180:0:0.25) to (1,-0.75);
            \draw[<-] (1.5,-0.75) to (1.5,-0.6) arc(180:0:0.25) to (2,-0.75);
            \draw[->] (0.5,0.75) to (0.5,0.6) arc(180:360:0.25) to (1,0.75);
            \draw[->] (1.5,0.75) to (1.5,0.6) arc(180:360:0.25) to (2,0.75);
            \draw[<-] (2.5,-0.75) to (2.5,0.75);
            \draw[->] (1,0.2) arc(90:450:0.2);
            \token{east}{0,0}{g^{-1}};
            \token{south}{0.5,0.65}{h^{-1}g};
            \token{north}{1,-0.6}{gt^{-1}};
            \token{west}{1.2,0}{g^{-1}ht};
            \token{north}{1.75,-0.35}{t};
            \token{south}{1.75,0.35}{h};
            \token{west}{2.5,0}{(ht)^{-1}};
        \end{tikzpicture}
        \\
        &\underset{\cref{H3}}{\overset{\cref{H1}}{=}}
        \begin{tikzpicture}[anchorbase]
            \draw[->] (0,-0.75) to[out=up,in=240] (2,0.75);
            \draw[<-] (0.5,-0.75) to[out=60,in=down] (2.5,0.75);
            \draw[->] (1,-0.75) \braidto (2.1,0) \braidto (1,0.75);
            \draw[<-] (1.5,-0.75) to[out=60,in=down] (2.5,0) to[out=up,in=-60] (1.5,0.75);
            \draw[->] (2,-0.75) to[out=120,in=down] (0,0.75);
            \draw[<-] (2.5,-0.75) to[out=up,in=-60] (0.5,0.75);
        \end{tikzpicture}
        + \sum_{g \in G}
        \begin{tikzpicture}[anchorbase]
            \draw[->] (0,-0.75) to (0,0.75);
            \draw[->] (1,-0.75) to (1,0.75);
            \draw[<-] (1.5,-0.75) to (1.5,0.75);
            \draw[<-] (2.5,-0.75) to (2.5,0.75);
            \draw[<-] (0.5,-0.75) to[out=up,in=up] (2,-0.75);
            \draw[->] (0.5,0.75) to[out=down,in=down] (2,0.75);
            \token{west}{0,0}{g^{-1}};
            \token{north}{0.73,-0.44}{g};
            \token{south}{0.73,0.44}{g};
            \token{east}{2.5,0}{g^{-1}};
        \end{tikzpicture}
        + \sum_{g \in G}
        \begin{tikzpicture}[anchorbase]
            \draw[->] (0,-0.75) to (0,-0.55) \braidto (1,0.75);
            \draw[<-] (1.5,-0.75) \braidto (2.5,0.55) to (2.5,0.75);
            \draw[->] (2,-0.75) \braidto (0,0.75);
            \draw[<-] (2.5,-0.75) \braidto (0.5,0.75);
            \draw[->] (1.5,0.75) to (1.5,0.7) arc(180:360:0.25) to (2,0.75);
            \draw[->] (1,-0.75) to (1,-0.7) arc(0:180:0.25) to (0.5,-0.75);
            \token{east}{0,-0.55}{g^{-1}};
            \token{south}{0.75,-0.45}{g};
            \token{north}{1.75,0.45}{g};
            \token{west}{2.5,0.55}{g^{-1}};
        \end{tikzpicture}
        \\
        & \qquad\qquad\qquad\qquad + \sum_{g \in G}
        \begin{tikzpicture}[anchorbase]
            \draw[->] (0,-0.75) to[out=up,in=240] (2,0.75);
            \draw[<-] (0.5,-0.75) to[out=60,in=down] (2.5,0.75);
            \draw[->] (1,-0.75) \braidto (0,0.75);
            \draw[<-] (1.5,-0.75) to (1.5,-0.65) arc(180:0:0.25) to (2,-0.75);
            \draw[<-] (2.5,-0.75) \braidto (1.5,0.75);
            \draw[->] (0.5,0.75) to (0.5,0.65) arc(180:360:0.25) to (1,0.75);
            \token{east}{0.25,0.2}{g^{-1}};
            \token{north}{1.75,-0.4}{g};
            \token{south}{0.75,0.4}{g};
            \token{west}{2.25,-0.2}{g^{-1}};
        \end{tikzpicture}
        + \sum_{g,h \in G}
        \begin{tikzpicture}[anchorbase]
            \draw[->] (0,-0.75) to (0,0.75);
            \draw[<-] (0.5,-0.75) to (0.5,-0.6) arc(180:0:0.25) to (1,-0.75);
            \draw[<-] (1.5,-0.75) to (1.5,-0.6) arc(180:0:0.25) to (2,-0.75);
            \draw[->] (0.5,0.75) to (0.5,0.6) arc(180:360:0.25) to (1,0.75);
            \draw[->] (1.5,0.75) to (1.5,0.6) arc(180:360:0.25) to (2,0.75);
            \draw[<-] (2.5,-0.75) to (2.5,0.75);
            \token{east}{0,0}{g^{-1}};
            \token{north}{0.75,0.35}{h^{-1}g};
            \token{north}{0.75,-0.35}{h};
            \token{south}{1.75,-0.35}{h^{-1}g};
            \token{south}{1.75,0.35}{h};
            \token{west}{2.5,0}{g^{-1}};
        \end{tikzpicture}
        \ .
    \end{align*}
    Similarly,
    \begin{align*}
        \Psi &\left( \, \crossing \otimes \idstrand \, \right) \circ \Psi_{n} \left( \, \idstrand \otimes \crossing \, \right) \circ \Psi \left( \, \crossing  \otimes \idstrand \, \right)
        \\
        &=
        \begin{tikzpicture}[anchorbase]
            \draw[->] (0,-0.75) to[out=up,in=240] (2,0.75);
            \draw[<-] (0.5,-0.75) to[out=60,in=down] (2.5,0.75);
            \draw[->] (1,-0.75) to[out=120,in=down] (0,0) to[out=up,in=240] (1,0.75);
            \draw[<-] (1.5,-0.75) \braidto (0.4,0) \braidto (1.5,0.75);
            \draw[->] (2,-0.75) to[out=120,in=down] (0,0.75);
            \draw[<-] (2.5,-0.75) to[out=up,in=-60] (0.5,0.75);
        \end{tikzpicture}
        + \sum_{g\in G}
        \begin{tikzpicture}[anchorbase]
            \draw[->] (0,-0.75) to (0,0.75);
            \draw[->] (1,-0.75) to (1,0.75);
            \draw[<-] (1.5,-0.75) to (1.5,0.75);
            \draw[<-] (2.5,-0.75) to (2.5,0.75);
            \draw[<-] (0.5,-0.75) to[out=up,in=up] (2,-0.75);
            \draw[->] (0.5,0.75) to[out=down,in=down] (2,0.75);
            \token{west}{0,0}{g^{-1}};
            \token{north}{0.73,-0.44}{g};
            \token{south}{0.73,0.44}{g};
            \token{east}{2.5,0}{g^{-1}};
        \end{tikzpicture}
        + \sum_{g \in G}
        \begin{tikzpicture}[anchorbase]
            \draw[->] (0,-0.75) to (0,-0.55) \braidto (1,0.75);
            \draw[<-] (1.5,-0.75) \braidto (2.5,0.55) to (2.5,0.75);
            \draw[->] (2,-0.75) \braidto (0,0.75);
            \draw[<-] (2.5,-0.75) \braidto (0.5,0.75);
            \draw[->] (1.5,0.75) to (1.5,0.7) arc(180:360:0.25) to (2,0.75);
            \draw[->] (1,-0.75) to (1,-0.7) arc(0:180:0.25) to (0.5,-0.75);
            \token{east}{0,-0.55}{g^{-1}};
            \token{south}{0.75,-0.45}{g};
            \token{north}{1.75,0.45}{g};
            \token{west}{2.5,0.55}{g^{-1}};
        \end{tikzpicture}
        \\
        & \qquad\qquad\qquad\qquad + \sum_{g \in G}
        \begin{tikzpicture}[anchorbase]
            \draw[->] (0,-0.75) to[out=up,in=240] (2,0.75);
            \draw[<-] (0.5,-0.75) to[out=60,in=down] (2.5,0.75);
            \draw[->] (1,-0.75) \braidto (0,0.75);
            \draw[<-] (1.5,-0.75) to (1.5,-0.65) arc(180:0:0.25) to (2,-0.75);
            \draw[<-] (2.5,-0.75) \braidto (1.5,0.75);
            \draw[->] (0.5,0.75) to (0.5,0.65) arc(180:360:0.25) to (1,0.75);
            \token{east}{0.25,0.2}{g^{-1}};
            \token{north}{1.75,-0.4}{g};
            \token{south}{0.75,0.4}{g};
            \token{west}{2.25,-0.2}{g^{-1}};
        \end{tikzpicture}
        + \sum_{g,h\in G}
        \begin{tikzpicture}[anchorbase]
            \draw[->] (0,-0.75) to (0,0.75);
            \draw[<-] (0.5,-0.75) to (0.5,-0.6) arc(180:0:0.25) to (1,-0.75);
            \draw[<-] (1.5,-0.75) to (1.5,-0.6) arc(180:0:0.25) to (2,-0.75);
            \draw[->] (0.5,0.75) to (0.5,0.6) arc(180:360:0.25) to (1,0.75);
            \draw[->] (1.5,0.75) to (1.5,0.6) arc(180:360:0.25) to (2,0.75);
            \draw[<-] (2.5,-0.75) to (2.5,0.75);
            \token{east}{0,0}{g^{-1}};
            \token{north}{0.75,0.35}{h^{-1}g};
            \token{north}{0.75,-0.35}{h};
            \token{south}{1.75,-0.35}{h^{-1}g};
            \token{south}{1.75,0.35}{h};
            \token{west}{2.5,0}{g^{-1}};
        \end{tikzpicture}
        \ .
    \end{align*}
    We then use \cref{genbraid} to see that the expressions are equal.

    \medskip

    \noindent \emph{Relations \cref{GPC3}}:  We will check the first and the fourth relations, since the proofs of the second and third are analogous.  For the first relation we compute:
    \[
        \Psi \left( \crossing \right) \circ \Psi \left( \, \idstrand \otimes \bottompin \right)
        \overset{\cref{H1}}{=}\
        \begin{tikzpicture}[anchorbase]
            \draw[->] (0,-0.5) \braidto (1,0.5);
            \draw[<-] (0.5,-0.5) \braidto (1.5,0.5);
            \draw[<-] (0,0.5) to[out=down,in=up] (1,-0.2) to[out=down,in=down,looseness=1.5] (1.4,-0.2) to[out=up,in=-60] (0.5,0.5);
        \end{tikzpicture}
        \ + \sum_{g \in G}
        \begin{tikzpicture}[anchorbase]
            \draw[->] (0.5,-0.5) to[out=up,in=-45] (0.25,0) to[out=135,in=down] (0,0.5);
            \draw[->] (0.5,0.5) to (0.5,0.4) arc(180:360:0.25) to (1,0.5);
            \draw[<-] (1,-0.5) to[out=up,in=225] (1.25,0) to[out=45,in=down] (1.5,0.5);
            \token{east}{0.25,0}{g^{-1}};
            \token{south}{0.75,0.15}{g};
        \end{tikzpicture}
        \ \overset{\mathclap{\cref{H1}}}{\underset{\mathclap{\cref{H3}}}{=}}\
        \begin{tikzpicture}[anchorbase]
            \draw[<-] (0,0.5) to (0,0.3) arc(180:360:0.25) to (0.5,0.5);
            \draw[->] (1,-0.5) to (1,0.5);
            \draw[<-] (1.5,-0.5) to (1.5,0.5);
        \end{tikzpicture}
        = \Psi \left( \bottompin \otimes \idstrand \, \right).
    \]
    For the fourth relation, we use \cref{eggs} and the fact that left curls are zero to compute
    \begin{multline*}
        \Psi \left( \, \idstrand \otimes \crossing \right) \circ \Psi \left( \crossing \otimes \idstrand \, \right) \circ \Psi_{n} \left( \, \idstrand \otimes \spliter \right)
        =
        \begin{tikzpicture}[anchorbase]
            \draw[->] (0,0) to[out=up,in=240] (2,1);
            \draw[-] (0.5,0) to[out=60,in=down] (2.5,1);
            \draw[->] (1,0) \braidto (0,1);
            \draw[-] (1.5,0) \braidto (0.5,1);
            \draw[->] (2,0) \braidto (1,1);
            \draw[-] (2.5,0) \braidto (1.5,1);
            \draw[<-] (2,-1) \braidto (2.5,0);
            \draw[-] (1.5,-1) \braidto (1,0);
            \draw[-] (0,-1) \braidto (0,0);
            \draw[<-] (0.5,-1) \braidto (0.5,0);
            \draw[-] (1.5,0) to (1.5,-0.1) to[out=down,in=down,looseness=1.5] (2,-0.1) to (2,0);
        \end{tikzpicture}
        + \sum_{g,h \in G}
        \begin{tikzpicture}[anchorbase]
            \draw[->] (0,-1) -- (0,1);
            \draw[<-] (0.5,-1) -- (0.5,-0.8) arc(180:0:0.25) -- (1,-1);
            \draw[->] (0.5,1) -- (0.5,0.8) arc(180:360:0.25) -- (1,1);
            \draw[->] (1.5,1) -- (1.5,0.8) arc(180:360:0.25) -- (2,1);
            \draw[->] (1.75,0.25) arc(90:450:0.25);
            \draw[<-] (2.5,-1) -- (2.5,1);
            \token{east}{0,0}{g^{-1}};
            \token{west}{2.5,0}{h^{-1}};
            \token{north}{0.75,0.55}{h^{-1}g};
            \token{south}{1.75,0.55}{h};
            \token{north}{0.75,-0.55}{g};
            \token{north}{1.75,-0.25}{g^{-1}h};
        \end{tikzpicture}
        \\
        \overset{\mathclap{\cref{H1}}}{\underset{\mathclap{\cref{H3}}}{=}}\
        \begin{tikzpicture}[anchorbase]
            \draw[->] (0,0) to[out=up,in=240] (2,1);
            \draw[<-] (0.5,0) to[out=60,in=down] (2.5,1);
            \draw[->] (1,0) \braidto (0,1);
            \draw[<-] (2.5,0) \braidto (1.5,1);
            \draw[->] (0.5,1) to (0.5,0.9) to[out=down,in=down,looseness=1.5] (1,0.9) to (1,1);
        \end{tikzpicture}
        \ + \sum_{g\in G}
        \begin{tikzpicture}[anchorbase]
            \draw[->] (0,-0.5) to (0,0.5);
            \draw[<-] (1,-0.5) to (1,-0.4) arc(180:0:0.25) to (1.5,-0.5);
            \draw[->] (0.5,0.5) to (0.5,0.4) arc(180:360:0.25) to (1,0.5);
            \draw[->] (1.5,0.5) to (1.5,0.4) arc(180:360:0.25) to (2,0.5);
            \draw[<-] (2.5,-0.5) to (2.5,0.5);
            \token{east}{0,0}{g^{-1}};
            \token{north}{1.25,0.-0.15}{g};
            \token{south}{1.75,0.15}{g};
            \token{west}{2.5,0}{g^{-1}};
        \end{tikzpicture}
        =
        \Psi \left( \spliter \ \idstrand \, \right) \circ \Psi \left( \crossing \right).
    \end{multline*}

    \medskip

    \noindent \emph{Relations \cref{GPC4}}:  To check the first relation in \cref{GPC4}, we use the fact that left curls are zero to see that
    \[
        \Phi_n \left( \merge \right) \circ \Phi_n \left( \crossing \right)
        = \sum_{g \in G}
        \begin{tikzpicture}[anchorbase]
            \draw[->] (0,-0.5) to (0,0.5);
            \draw[<-] (0.4,-0.5) to (0.4,-0.4) arc(180:0:0.2) to (0.8,-0.5);
            \draw[->] (0.8,0.2) arc(0:360:0.2);
            \draw[<-] (1.2,-0.5) to (1.2,0.5);
            \token{east}{0,0}{g^{-1}};
            \token{west}{1.2,0}{g^{-1}};
            \token{north}{0.6,-0.2}{g};
            \token{east}{0.4,0.2}{g};
        \end{tikzpicture}
        \overset{\cref{H3}}{=}
        \begin{tikzpicture}[anchorbase]
            \draw[->] (0,1) \braidto (0.5,2);
            \draw[<-] (1.5,1) \braidto (1,2);
            \draw[<-] (0.5,1) to (0.5,1.1) to[out=up,in=up,looseness=2] (1,1.1) to (1,1);
        \end{tikzpicture}
        = \Phi_n \left( \merge \right).
    \]
    For the second relation in \cref{GPC4}, we compute
    \[
        \Phi_n \left( \merge \right) \circ \Phi_n
        \left(
            \begin{tikzpicture}[anchorbase]
                \draw (-0.2,-0.25) -- (-0.2,0.25);
                \draw (0.2,-0.25) -- (0.2,0.25);
                \token{east}{-0.2,0}{g};
                \token{west}{0.2,0}{h};
            \end{tikzpicture}
        \right)
        \circ \Phi_n \left( \spliter \right)
        \overset{\cref{H1}}{=}
        \begin{tikzpicture}[anchorbase]
            \draw[->] (0,-0.2) arc(-90:270:0.2);
            \draw[->] (-0.5,-0.7) -- (-0.5,0.7);
            \draw[<-] (0.5,-0.7) -- (0.5,0.7);
            \token{east}{-0.5,0}{g};
            \token{west}{0.5,0}{h^{-1}};
            \token{south}{0,0.2}{g^{-1}h};
        \end{tikzpicture}
        \overset{\cref{H3}}{=} \delta_{g,h}
        \begin{tikzpicture}[anchorbase]
            \draw[->] (-0.2,-0.7) -- (-0.2,0.7);
            \draw[<-] (0.2,-0.7) -- (0.2,0.7);
            \token{east}{-0.2,0}{g};
            \token{west}{0.2,0}{g^{-1}};
        \end{tikzpicture}
        = \delta_{g,h} \Phi_n \left( \tokstrand \right).
    \]

    \medskip

    \noindent \emph{Relations \cref{GPC5}}: For the third relation in \cref{GPC5}, we compute
    \begin{multline*}
        \Psi \left( \crossing \right) \circ \Psi
        \left(
            \begin{tikzpicture}[anchorbase]
                \draw (-0.2,-0.25) -- (-0.2,0.25);
                \draw (0.2,-0.25) -- (0.2,0.25);
                \token{east}{-0.2,0}{g};
            \end{tikzpicture}\,
        \right)
        =
        \begin{tikzpicture}[anchorbase]
            \draw[->] (0,-0.5) -- (0,-0.3)  \braidto (1,0.5);
            \draw[<-] (0.5,-0.5) \braidto (1.5,0.3) -- (1.5,0.5);
            \draw[->] (1,-0.5) \braidto (0,0.5);
            \draw[<-] (1.5,-0.5) \braidto (0.5,0.5);
            \token{east}{0,-0.3}{g};
            \token{west}{1.5,0.3}{g^{-1}};
        \end{tikzpicture}
        + \sum_{h \in G}
        \begin{tikzpicture}[anchorbase]
            \draw[->] (0,-0.7) -- (0,0.7);
            \draw[<-] (0.5,-0.7) -- (0.5,-0.6)  arc(180:0:0.25) -- (1,-0.7);;
            \draw[->] (0.5,0.7) -- (0.5,0.6) arc(180:360:0.25) -- (1,0.7);
            \draw[<-] (1.5,-0.7) -- (1.5,0.7);
            \token{east}{0,0}{h^{-1}g};
            \token{west}{1.5,0}{h^{-1}};
            \token{south}{0.75,-0.35}{g^{-1}h};
            \token{south}{0.75,0.35}{h};
        \end{tikzpicture}
        \\
        =
        \begin{tikzpicture}[anchorbase]
            \draw[->] (0,-0.5) -- (0,-0.3)  \braidto (1,0.5);
            \draw[<-] (0.5,-0.5) \braidto (1.5,0.3) -- (1.5,0.5);
            \draw[->] (1,-0.5) \braidto (0,0.5);
            \draw[<-] (1.5,-0.5) \braidto (0.5,0.5);
            \token{east}{0,-0.3}{g};
            \token{west}{1.5,0.3}{g^{-1}};
        \end{tikzpicture}
        + \sum_{t \in G}
        \begin{tikzpicture}[anchorbase]
            \draw[->] (0,-0.7) -- (0,0.7);
            \draw[<-] (0.5,-0.7) -- (0.5,-0.6)  arc(180:0:0.25) -- (1,-0.7);;
            \draw[->] (0.5,0.7) -- (0.5,0.6) arc(180:360:0.25) -- (1,0.7);
            \draw[<-] (1.5,-0.7) -- (1.5,0.7);
            \token{east}{0,0}{t^{-1}};
            \token{west}{1.5,0}{t^{-1}g^{-1}};
            \token{north}{0.75,-0.35}{t};
            \token{north}{0.75,0.35}{gt};
        \end{tikzpicture}
        =
        \Psi
        \left(\,
            \begin{tikzpicture}[anchorbase]
                \draw (-0.2,-0.25) -- (-0.2,0.25);
                \draw (0.2,-0.25) -- (0.2,0.25);
                \token{west}{0.2,0}{g};
            \end{tikzpicture}
        \right)
        \circ \Psi \left( \crossing \right),
    \end{multline*}
    where we let $t=g^{-1}h$.  The other relations in \cref{GPC5} are straightforward to verify.
    \details{
        For the first relation, we have
        \[
            \Psi \left( \tokstrand \right) \circ \Psi \left( \tokstrand[h] \right)
            \overset{\cref{H1}}{=}
            \begin{tikzpicture}[anchorbase]
                \draw[->] (-0.2,-0.3) -- (-0.2,0.3);
                \draw[<-] (0.2,-0.3) -- (0.2,0.3);
                \token{east}{-0.2,0}{gh};
                \token{west}{0.2,0}{h^{-1}g^{-1}};
            \end{tikzpicture}
            =
            \begin{tikzpicture}[anchorbase]
                \draw[->] (-0.2,-0.3) -- (-0.2,0.3);
                \draw[<-] (0.2,-0.3) -- (0.2,0.3);
                \token{east}{-0.2,0}{gh};
                \token{west}{0.2,0}{(gh)^{-1}};
            \end{tikzpicture}
            = \Psi \left( \tokstrand[gh] \, \right).
        \]
        The second relation is clear.  We check the fourth relation:
        \[
            \Psi
            \left(
                \begin{tikzpicture}[anchorbase]
                    \draw (-0.2,-0.25) -- (-0.2,0.25);
                    \draw (0.2,-0.25) -- (0.2,0.25);
                    \token{east}{-0.2,0}{g};
                    \token{west}{0.2,0}{g};
                \end{tikzpicture}
            \right)
            \circ \Psi \left( \spliter \right)
            \overset{\cref{H1}}{=}
            \begin{tikzpicture}[anchorbase]
                \draw[->] (0.5,-0.5) to[out=up,in=-45] (0.25,0) to[out=135,in=down] (0,0.5);
                \draw[->] (0.5,0.5) to (0.5,0.4) arc(180:360:0.25) to (1,0.5);
                \draw[<-] (1,-0.5) to[out=up,in=225] (1.25,0) to[out=45,in=down] (1.5,0.5);
                \token{east}{0.25,0}{g};
                \token{west}{1.25,0}{g^{-1}};
            \end{tikzpicture}
            =
            \Phi_n \left( \spliter \right) \circ \Phi_n \left( \tokstrand\, \right).
        \]
        The fifth relation follows immediately from \cref{cct}.
    }
\end{proof}

We now wish to show that $\Psi$ is faithful.  Our approach is inspired by that of \cite[App.~A]{NS19}, which deals with the case where $G$ is the trivial group.

In what follows, we will identify a permutation $\pi \in \fS_k$ with the partition of type $\binom{k}{k}$ with parts $\{i,\pi(i)'\}$, $1 \le i \le k$.  Recall that, for $1 \le i < j \le k$, the pair $(i,j)$ is an \emph{inversion} in $\pi \in \fS_k$ if $\pi(i) > \pi(j)$.  Suppose $Q$ is a partition of type $\binom{l}{k}$.  We say that a permutation $P \in \fS_l$ is a \emph{left shuffle} for $Q$ if there is no inversion $(i,j)$ in $P$ such that vertices $i'$ and $j'$ lie in the same connected component of $Q$.  Intuitively, $P$ is a left shuffle for $Q$ if it does not change the relative order of vertices in each component.  Similarly, we say that a permutation $P \in \fS_k$ is a \emph{right shuffle} for $Q$ if there is no inversion $(i,j)$ in $P^{-1}$ such that vertices $i$ and $j$ lie in the same component of $Q$.  For example, if
\[
    P =
    \begin{tikzpicture}[centerzero]
        \pd{-0.5,-0.25};
        \pd{0,-0.25};
        \pd{0.5,-0.25};
        \pd{-0.5,0.25};
        \pd{0,0.25};
        \pd{0.5,0.25};
        \draw (-0.5,-0.25) -- (0,0.25);
        \draw (0,-0.25) -- (0.5,0.25);
        \draw (0.5,-0.25) -- (-0.5,0.25);
    \end{tikzpicture}
    \quad \text{and} \quad
    Q =
    \begin{tikzpicture}[centerzero]
        \pd{-0.5,-0.25};
        \pd{0,-0.25};
        \pd{0.5,-0.25};
        \pd{-0.5,0.25};
        \pd{0,0.25};
        \pd{0.5,0.25};
        \draw (-0.5,0.25) to (0,0.25) to (0,-0.25) to (-0.5,-0.25) to (-0.5,0.25);
    \end{tikzpicture}
    \ ,
\]
then $P$ is a left shuffle for $Q$ but not a right shuffle for $Q$.

We say a partition diagram is \emph{tensor-planar} if it is a tensor product (horizontal juxtaposition) of partition diagrams consisting of a single connected component.  Note that every tensor-planar partition diagram is planar (i.e.\ can be represented as a graph without edge crossings inside of the rectangle formed by its vertices) but the converse is false.

Every equivalence class $[P,\bg]$ of $G$-partitions can be factored as a product
\begin{equation} \label{plum}
    [P,\bg]
    =
    \begin{tikzpicture}[centerzero]
        \pd{-0.4,-0.2};
        \pd{0.4,-0.2};
        \pd{0.8,-0.2};
        \pd{-0.4,0.2} node[anchor=south] {\dotlabel{g_{l'}}};
        \pd{0.4,0.2} node[anchor=south] {\dotlabel{g_{2'}}};
        \pd{0.8,0.2} node[anchor=south] {\dotlabel{g_{1'}}};
        \draw (-0.4,-0.2) -- (-0.4,0.2);
        \draw (0.4,-0.2) -- (0.4,0.2);
        \draw (0.8,-0.2) -- (0.8,0.2);
        \node at (0.04,0) {$\cdots$};
    \end{tikzpicture}
    \circ [P_1] \circ [P_2] \circ [P_3] \circ
    \begin{tikzpicture}[centerzero]
        \pd{-0.4,-0.2} node[anchor=north] {\dotlabel{g_k}};
        \pd{0.4,-0.2} node[anchor=north] {\dotlabel{g_2}};
        \pd{0.8,-0.2} node[anchor=north] {\dotlabel{g_1}};
        \pd{-0.4,0.2};
        \pd{0.4,0.2};
        \pd{0.8,0.2};
        \draw (-0.4,-0.2) -- (-0.4,0.2);
        \draw (0.4,-0.2) -- (0.4,0.2);
        \draw (0.8,-0.2) -- (0.8,0.2);
        \node at (0.04,0) {$\cdots$};
    \end{tikzpicture}
    \ .
\end{equation}
where $P_2$ is tensor-planar, $P_1$ is a left shuffle for $P_2$, and $P_3$ is a right shuffle for $P_2$.  (See \cref{supernova}.)  The number of connected components in $P$ is equal to the number of connected components in $P_2$.  For example, the $G$-partition diagram
\[
    \begin{tikzpicture}[anchorbase]
        \pd{0.5,0} node[anchor=north] {\dotlabel{g_4}};
        \pd{1,0} node[anchor=north] {\dotlabel{g_3}};
        \pd{1.5,0} node[anchor=north] {\dotlabel{g_2}};
        \pd{2,0} node[anchor=north] {\dotlabel{g_1}};
        \pd{0,0.5} node[anchor=south] {\dotlabel{g_{5'}}};
        \pd{0.5,0.5} node[anchor=south] {\dotlabel{g_{4'}}};
        \pd{1,0.5} node[anchor=south] {\dotlabel{g_{3'}}};
        \pd{1.5,0.5} node[anchor=south] {\dotlabel{g_{2'}}};
        \pd{2,0.5} node[anchor=south] {\dotlabel{g_{1'}}};
        \draw (0,0.5) to[out=down,in=down] (1,0.5);
        \draw (0.5,0) to[out=up,in=up,looseness=0.5] (1,0);
        \draw (0.5,0.5) to[out=down,in=down,looseness=0.5] (1.5,0.5);
        \draw (1,0) \braidto (2,0.5);
        \draw (2,0) \braidto (1.5,0.5);
    \end{tikzpicture}
\]
has four connected components and decomposition
\[
    \begin{tikzpicture}[anchorbase]
        \pd{0.5,0} node[anchor=north] {\dotlabel{g_4}};
        \pd{1,0} node[anchor=north] {\dotlabel{g_3}};
        \pd{1.5,0} node[anchor=north] {\dotlabel{g_2}};
        \pd{2,0} node[anchor=north] {\dotlabel{g_1}};
        \pd{0,0.5} node[anchor=south] {\dotlabel{g_{5'}}};
        \pd{0.5,0.5} node[anchor=south] {\dotlabel{g_{4'}}};
        \pd{1,0.5} node[anchor=south] {\dotlabel{g_{3'}}};
        \pd{1.5,0.5} node[anchor=south] {\dotlabel{g_{2'}}};
        \pd{2,0.5} node[anchor=south] {\dotlabel{g_{1'}}};
        \draw (0,0.5) to[out=down,in=down] (1,0.5);
        \draw (0.5,0) to[out=up,in=up,looseness=0.5] (1,0);
        \draw (0.5,0.5) to[out=down,in=down,looseness=0.5] (1.5,0.5);
        \draw (1,0) \braidto (2,0.5);
        \draw (2,0) \braidto (1.5,0.5);
    \end{tikzpicture}
    =
    \begin{tikzpicture}[centerzero]
        \pd{-1,-0.25};
        \pd{-0.5,-0.25};
        \pd{0,-0.25};
        \pd{0.5,-0.25};
        \pd{1,-0.25};
        \pd{-1,0.25} node[anchor=south] {\dotlabel{g_{5'}}};
        \pd{-0.5,0.25} node[anchor=south] {\dotlabel{g_{4'}}};
        \pd{0,0.25} node[anchor=south] {\dotlabel{g_{3'}}};
        \pd{0.5,0.25} node[anchor=south] {\dotlabel{g_{2'}}};
        \pd{1,0.25} node[anchor=south] {\dotlabel{g_{1'}}};
        \draw (-1,-0.25) -- (-1,0.25);
        \draw (-0.5,-0.25) -- (-0.5,0.25);
        \draw (0,-0.25) -- (0,0.25);
        \draw (0.5,-0.25) -- (0.5,0.25);
        \draw (1,-0.25) -- (1,0.25);
    \end{tikzpicture}
    \circ [P_1] \circ [P_2] \circ [P_3] \circ
    \begin{tikzpicture}[centerzero]
        \pd{-0.5,-0.25} node[anchor=north] {\dotlabel{g_4}};
        \pd{0,-0.25} node[anchor=north] {\dotlabel{g_3}};
        \pd{0.5,-0.25} node[anchor=north] {\dotlabel{g_2}};
        \pd{1,-0.25} node[anchor=north] {\dotlabel{g_1}};
        \pd{-0.5,0.25};
        \pd{0,0.25};
        \pd{0.5,0.25};
        \pd{1,0.25};
        \draw (-0.5,-0.25) -- (-0.5,0.25);
        \draw (0,-0.25) -- (0,0.25);
        \draw (0.5,-0.25) -- (0.5,0.25);
        \draw (1,-0.25) -- (1,0.25);
    \end{tikzpicture}
    \ ,
\]
where
\[
    P_1 =
    \begin{tikzpicture}[anchorbase]
        \pd{0,0};
        \pd{0.5,0};
        \pd{1,0};
        \pd{1.5,0};
        \pd{2,0};
        \pd{0,0.5};
        \pd{0.5,0.5};
        \pd{1,0.5};
        \pd{1.5,0.5};
        \pd{2,0.5};
        \draw (0,0) \braidto (0,0.5);
        \draw (1.5,0) \braidto (1.5,0.5);
        \draw (2,0) \braidto (2,0.5);
        \draw (0.5,0) \braidto (1,0.5);
        \draw (1,0) \braidto (0.5,0.5);
    \end{tikzpicture},
    \quad
    P_2 =
    \begin{tikzpicture}[anchorbase]
        \pd{0,0};
        \pd{0.5,0};
        \pd{1,0};
        \pd{1.5,0};
        \pd{0,0.5};
        \pd{0.5,0.5};
        \pd{1,0.5};
        \pd{1.5,0.5};
        \pd{2,0.5};
        \draw (0,0.5) to[out=down,in=down] (0.5,0.5);
        \draw (0.5,0) \braidto (1,0.5) to[out=down,in=down] (1.5,0.5);
        \draw (1,0) to[out=up,in=up] (1.5,0) \braidto (2,0.5);
    \end{tikzpicture}
    =
    \begin{tikzpicture}[{>=To,baseline={(0,0.15)}}]
        \pd{0,0};
        \node at (0,.5) {};
    \end{tikzpicture}
    \otimes
    \begin{tikzpicture}[{>=To,baseline={(0,0.15)}}]
        \pd{0,0.5};
        \pd{0.5,0.5};
        \draw (0,0.5) to[out=down,in=down,looseness=1.5] (0.5,0.5);
        \node at (0,0) {};
    \end{tikzpicture}
    \otimes
    \begin{tikzpicture}[{>=To,baseline={(0,0.15)}}]
        \pd{0,0};
        \pd{0,0.5};
        \pd{0.5,0.5};
        \draw (0,0) to (0,0.5) to[out=down,in=down,looseness=1.5] (0.5,0.5);
    \end{tikzpicture}
    \otimes
    \begin{tikzpicture}[{>=To,baseline={(0,0.15)}}]
        \pd{0,0};
        \pd{0.5,0};
        \pd{0,0.5};
        \draw (0.5,0) to[out=up,in=up,looseness=1.5] (0,0) to (0,0.5);
    \end{tikzpicture}
  ,
  \quad
  P_3 =
  \begin{tikzpicture}[anchorbase]
    \pd{0.5,0};
    \pd{1,0};
    \pd{1.5,0};
    \pd{2,0};
    \pd{0.5,0.5};
    \pd{1,0.5};
    \pd{1.5,0.5};
    \pd{2,0.5};
    \draw (0.5,0) to[out=up,in=down,looseness=0.5] (1.5,0.5);
    \draw (1,0) to[out=up,in=down,looseness=0.5] (2,0.5);
    \draw (1.5,0) to[out=up,in=down,looseness=0.5] (0.5,0.5);
    \draw (2,0) to[out=up,in=down,looseness=0.5] (1,0.5);
  \end{tikzpicture}.
\]

For $n,k,l \in \N$, let $\Hom_{\Par(G)}^{\le n}(\go^{\otimes k},\go^{\otimes l})$ denote the subspace of $\Hom_{\Par(G)}(\go^{\otimes k},\go^{\otimes l})$ spanned by $G$-partition diagrams with at most $n$ connected components.  Composition respects the corresponding filtration on morphism spaces since the composite of a partition with $n$ connected components and a partition with $m$ connected components has at most $n+m$ connected components.

Recall the bases of the morphism spaces of $\Heis(G)$ given in \cite[Thm.~7.2]{BSW20}.  (The category $\Heis(G)$ is $\Heis_{-1}(\kk G)$ in the notation of \cite{BSW20}.)  For any such basis element $f$ in \linebreak $\Hom_{\Heis(G)} \big( (\uparrow \downarrow)^{\otimes k}, (\uparrow \downarrow)^{\otimes l} \big)$, define the \emph{block number} of $f$ to be number of distinct closed (possibly intersecting) loops in the diagram
\[
  \begin{tikzpicture}[anchorbase]
    \draw[->] (0,0) -- (0,0.1) arc (180:0:.25) -- (0.5,0);
  \end{tikzpicture}^{\otimes l}
  \circ
  f
  \circ
  \begin{tikzpicture}[anchorbase]
    \draw[<-] (0,1) -- (0,0.9) arc (180:360:.25) -- (0.5,1);
  \end{tikzpicture}^{\otimes k}.
\]
For $n \in \N$, let $\Hom_{\Heis(G)}^{\le n} \big( (\uparrow \downarrow)^{\otimes k}, (\uparrow \downarrow)^{\otimes l} \big)$ denote the subspace of $\Hom_{\Heis(G)} \big( (\uparrow \downarrow)^{\otimes k}, (\uparrow \downarrow)^{\otimes l} \big)$ spanned by basis elements with block number at most $n$.  Composition respects the resulting filtration on morphism spaces.

The image under $\Psi$ of tensor-planar partition diagrams (writing the image in terms of the aforementioned bases of the morphism spaces of $\Heis(G)$) is particularly simple to describe.  Since each tensor-planar partition diagram is a tensor product of single connected components, consider the case of a single connected component.  Then, for example, we have
\[
    \Psi
    \left(
        \begin{tikzpicture}[anchorbase]
            \pd{0,0.5};
            \pd{0.5,0.5};
            \pd{1,0.5};
            \draw (0,0.5) to[out=up,in=up,looseness=1.5] (0.5,0.5);
            \draw (0.5,0.5) to[out=up,in=up,looseness=1.5] (1,0.5);
        \end{tikzpicture}
    \right)
    =
    \begin{tikzpicture}[>=To,baseline={(0,0.2)}]
        \draw[<-] (1.5,0) to[out=up,in=up,looseness=2] (2,0);
        \draw[<-] (0.5,0) to[out=up,in=up,looseness=2] (1,0);
        \draw[->] (0,0) to[out=up,in=up] (2.5,0);
    \end{tikzpicture}
    \quad \text{and} \quad
    \Psi
    \left(
        \begin{tikzpicture}[anchorbase]
            \pd{0,0};
            \pd{0.5,0};
            \pd{1,0};
            \pd{0,0.5};
            \pd{0.5,0.5};
            \draw (1,0) to (0,0) to (0,0.5) to (0.5,0.5);
        \end{tikzpicture}
    \right)
    =
    \begin{tikzpicture}[anchorbase]
        \draw[->] (0,0) \braidto (0.5,1);
        \draw[<-] (0.5,0) to[out=up,in=up,looseness=2] (1,0);
        \draw[<-] (1.5,0) to[out=up,in=up,looseness=2] (2,0);
        \draw[<-] (2.5,0) \braidto(2,1);
        \draw[->] (1,1) to[out=down,in=down,looseness=2] (1.5,1);
    \end{tikzpicture}
    \ .
\]
The general case is analogous.  (In fact, the images of all planar partition diagrams are similarly easy to describe.)  In particular, if $P$ is a tensor-planar partition diagram with $n$ connected components, then $\Psi(P)$ is a planar diagram with block number $n$.

For $i=1,\dotsc,k-1$, consider the morphism
\begin{multline}
    \chi_i :=
    \Psi
    \left(
        1_\go^{\otimes (k-i-1)} \otimes
        \left(
            \begin{tikzpicture}[centerzero]
                \pd{-0.25,-0.25};
                \pd{-0.25,0.25};
                \pd{0.25,-0.25};
                \pd{0.25,0.25};
                \draw (-0.25,-0.25) \braidto (0.25,0.25);
                \draw (0.25,-0.25) \braidto (-0.25,0.25);
            \end{tikzpicture}
            -
            \begin{tikzpicture}[centerzero]
                \pd{-0.25,-0.25};
                \pd{-0.25,0.25};
                \pd{0.25,-0.25};
                \pd{0.25,0.25};
                \draw (-0.25,-0.25) to (-0.25,0.25) to (0.25,0.25) to (0.25,-0.25) to (-0.25,-0.25);
            \end{tikzpicture}
        \right)
        \otimes 1_\go^{\otimes (i-1)}
    \right)
    \\
    =
    \begin{tikzpicture}[anchorbase]
        \draw[->] (0,0) to (0,1);
        \draw[<-] (0.5,0) to (0.5,1);
        \node at (1.25,0.5) {$\cdots$};
        \draw[->] (2,0) to (2,1);
        \draw[<-] (2.5,0) to (2.5,1);
        \draw[->] (3,0) \braidto (4,1);
        \draw[<-] (3.5,0) \braidto (4.5,1);
        \draw[->] (4,0) \braidto (3,1);
        \draw[<-] (4.5,0) \braidto (3.5,1);
        \draw[->] (5,0) to (5,1);
        \draw[<-] (5.5,0) to (5.5,1);
        \node at (6.25,0.5) {$\cdots$};
        \draw[->] (7,0) to (7,1);
        \draw[<-] (7.5,0) to (7.5,1);
    \end{tikzpicture}
    \ \in \End_{\Heis(G)} \big( (\uparrow \downarrow)^{\otimes k} \big).
\end{multline}
For a permutation partition diagram $P \colon k \to k$, let $T(P)$ be the morphism in $\Heis(G)$ defined as follows:  Write $D = s_{i_1} \circ s_{i_2} \circ \dotsb \circ s_{i_r}$ as a reduced word in simple transpositions and let
\[
    T(P)
    = \chi_{i_1} \circ \chi_{i_2} \circ \dotsb \circ \chi_{i_r}.
\]
It follows from the braid relations \cref{genbraid} that $T(P)$ is independent of the choice of reduced word for $P$.

\begin{lem} \label{towel}
    Suppose $(P,\bg)$ is a $G$-partition diagram with decomposition \cref{plum}.  Then
    \[
        \Psi([P,\bg]) -
        \begin{tikzpicture}[centerzero]
            \pd{-0.4,-0.2};
            \pd{0.4,-0.2};
            \pd{0.8,-0.2};
            \pd{-0.4,0.2} node[anchor=south] {\dotlabel{g_{l'}}};
            \pd{0.4,0.2} node[anchor=south] {\dotlabel{g_{2'}}};
            \pd{0.8,0.2} node[anchor=south] {\dotlabel{g_{1'}}};
            \draw (-0.4,-0.2) -- (-0.4,0.2);
            \draw (0.4,-0.2) -- (0.4,0.2);
            \draw (0.8,-0.2) -- (0.8,0.2);
            \node at (0.04,0) {$\cdots$};
        \end{tikzpicture}
        \circ [T(P_1)] \circ [P_2] \circ [T(P_3)] \circ
        \begin{tikzpicture}[centerzero]
            \pd{-0.4,-0.2} node[anchor=north] {\dotlabel{g_k}};
            \pd{0.4,-0.2} node[anchor=north] {\dotlabel{g_2}};
            \pd{0.8,-0.2} node[anchor=north] {\dotlabel{g_1}};
            \pd{-0.4,0.2};
            \pd{0.4,0.2};
            \pd{0.8,0.2};
            \draw (-0.4,-0.2) -- (-0.4,0.2);
            \draw (0.4,-0.2) -- (0.4,0.2);
            \draw (0.8,-0.2) -- (0.8,0.2);
            \node at (0.04,0) {$\cdots$};
        \end{tikzpicture}
        \in \Hom_{\Heis(G)}^{\le n-1} \big( (\uparrow \downarrow)^{\otimes k}, (\uparrow \downarrow)^{\otimes l} \big).
    \]
\end{lem}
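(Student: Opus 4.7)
The plan is to expand both sides of the claimed congruence via the functoriality of $\Psi$, and reduce the assertion to a combinatorial fact about component counts in $\Par(G)$.

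First, observe that $\chi_i = \Psi(s_i - E_i)$, where $s_i$ denotes the simple crossing partition at position $i$ and $E_i$ the ``all-four-connected'' partition at position $i$ (equivalently, $\spliter \circ \merge$ at that position).  By functoriality, $T(P_j) = \Psi(W_j)$ for $j \in \{1,3\}$, where $W_j$ is the composition in $\Par(G)$ of $(s_{i_\ell} - E_{i_\ell})$ along the chosen reduced word for $P_j$.  Expanding multilinearly, $W_j = [P_j] + \sum_{\emptyset \ne S} (-1)^{|S|} [R^{(j)}_S]$, where $R^{(j)}_S$ is obtained from the reduced word by replacing $s_{i_\ell}$ with $E_{i_\ell}$ for $\ell \in S$.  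Substituting into the second term of the claimed difference, using the decomposition \cref{plum}, and applying functoriality, one finds that this difference equals
$\sum_{(S,S')\ne(\emptyset,\emptyset)} (-1)^{|S|+|S'|} \Psi\bigl([X_{S,S',\bg}]\bigr)$,
where $[X_{S,S',\bg}]$ carries the same top and bottom labels as $[P,\bg]$ and has underlying partition $R^{(1)}_S \star P_2 \star R^{(3)}_{S'}$.  It therefore suffices to show each such $\Psi([X_{S,S',\bg}])$ lies in $\Hom_{\Heis(G)}^{\le n-1}$.

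The combinatorial heart of the argument is the claim that $R^{(1)}_S \star P_2 \star R^{(3)}_{S'}$ has at most $n-1$ connected components whenever $(S,S')\ne(\emptyset,\emptyset)$.  I would use the standard fact that, in a reduced expression for a permutation, each elementary crossing $s_{i_\ell}$ corresponds bijectively to an inversion of the permutation --- namely the pair of bottom positions through which the two strands meeting at $s_{i_\ell}$ pass.  Assume $S \ne \emptyset$ (the case $S = \emptyset$, $S' \ne \emptyset$ is symmetric via the right-shuffle hypothesis on $P_3$).  Replacing $s_{i_\ell}$ by $E_{i_\ell}$ for some $\ell \in S$ merges, inside $R^{(1)}_S$, the two strands of $P_1$ indexed by the corresponding inversion; their bottom endpoints coincide with a pair of top vertices of $P_2$ which, by the left-shuffle hypothesis on $P_1$, lie in distinct components of $P_2$.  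Consequently, stacking $R^{(1)}_S$ on top of $P_2$ identifies two previously-distinct components, dropping the total component count from $n$ to at most $n-1$.  Further $E$-replacements and subsequent composition with $R^{(3)}_{S'}$ can only preserve or further reduce the count, since composition of partition diagrams never creates new components.

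Finally, I would invoke the general claim that $\Psi$ sends any $G$-partition diagram with $m$ connected components into $\Hom_{\Heis(G)}^{\le m}$; this can be established by direct computation on the generators of $\Par(G)$, combined with compatibility of the component-count filtration with composition and tensor product, and comparison with the block-number filtration on the basis of $\Heis(G)$ described in \cite[Thm.~7.2]{BSW20}.  The main obstacle is the combinatorial step: one must carefully verify that multiple simultaneous $E$-replacements on both sides interact with the shuffle hypotheses through $P_2$ so that the first merge is always effective; the filtration-compatibility claim, once set up, is essentially routine.
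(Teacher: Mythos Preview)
Your approach differs from the paper's in an essential way.  The paper's proof is a two-line reduction: the case $\bg=\mathbf 1$ is cited as \cite[Prop.~A.1]{NS19}, and the general case follows because composing with the token diagrams $\Psi(\tokstrand[g_{i'}])$, $\Psi(\tokstrand[g_j])$ does not change block number.  You instead expand $T(P_j)=\Psi(W_j)$ via functoriality, push the whole difference back into $\Par(G)$, and try to conclude via a general filtration-preservation property of $\Psi$.

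Your combinatorial step is correct and well argued.  Each elementary crossing $s_{i_\ell}$ in a reduced word for $P_1$ corresponds to an inversion $(i,j)$; the left-shuffle hypothesis forces the top vertices $i',j'$ of $P_2$ to lie in distinct components.  Replacing that $s_{i_\ell}$ by $E_{i_\ell}$ therefore merges two components of $P_1\star P_2\star P_3$, and further $E$-replacements only coarsen.  (No powers of $d$ appear: every component of each $s_i$ and $E_i$ meets both boundaries, so no floating components arise in the composition.)  The symmetric argument for $P_3$ via the right-shuffle hypothesis is fine.

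The genuine gap is your final step.  The assertion that $\Psi$ sends every $m$-component $G$-partition into $\Hom_{\Heis(G)}^{\le m}$ cannot be obtained, as you suggest, ``by direct computation on the generators combined with compatibility of the filtrations.''  Both filtrations are only \emph{sub-additive} under composition: the paper records that a composite of an $a$-component and a $b$-component partition has at most $a+b$ components, and the analogous statement for block number.  From generator-level bounds this yields only a bound by the \emph{length} of a word for $[Q]$, not by its component count.  Your claim is in fact of the same strength as the lemma you are proving: the natural way to establish it is a \emph{simultaneous} induction on $m$, proving ``$\Psi(\text{$m$-component})\subseteq\Hom^{\le m}$'' and Lemma~\ref{towel} for $m$ together, each feeding into the other at the next stage.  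You do not set this up --- and your closing remark that ``the filtration-compatibility claim, once set up, is essentially routine'' gets the difficulty backwards.  The shuffle combinatorics is routine; the filtration claim is where the real content lies, and as written your argument is circular at precisely that point.
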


\begin{proof}
    The case where $\bg = \mathbf{1}$ is \cite[Prop.~A.1]{NS19}.  Since composition with $G$-partition diagrams of the form
    \[
        \begin{tikzpicture}[centerzero]
            \pd{-0.4,-0.2};
            \pd{0.4,-0.2};
            \pd{0.8,-0.2};
            \pd{-0.4,0.2} node[anchor=south] {\dotlabel{g_{l'}}};
            \pd{0.4,0.2} node[anchor=south] {\dotlabel{g_{2'}}};
            \pd{0.8,0.2} node[anchor=south] {\dotlabel{g_{1'}}};
            \draw (-0.4,-0.2) -- (-0.4,0.2);
            \draw (0.4,-0.2) -- (0.4,0.2);
            \draw (0.8,-0.2) -- (0.8,0.2);
            \node at (0.04,0) {$\cdots$};
        \end{tikzpicture}
        \quad \text{and} \quad
        \begin{tikzpicture}[centerzero]
            \pd{-0.4,-0.2} node[anchor=north] {\dotlabel{g_k}};
            \pd{0.4,-0.2} node[anchor=north] {\dotlabel{g_2}};
            \pd{0.8,-0.2} node[anchor=north] {\dotlabel{g_1}};
            \pd{-0.4,0.2};
            \pd{0.4,0.2};
            \pd{0.8,0.2};
            \draw (-0.4,-0.2) -- (-0.4,0.2);
            \draw (0.4,-0.2) -- (0.4,0.2);
            \draw (0.8,-0.2) -- (0.8,0.2);
            \node at (0.04,0) {$\cdots$};
        \end{tikzpicture}
    \]
    does not change the block number, the general case follows.
\end{proof}

\begin{proof}[Proof of \cref{hitchcock}]
    Since $\Psi$ is well defined by \cref{poker}, it remains to show it is faithful.  As in \cref{gummies}, we view $\Par(G)$ as $\Par(G,d)$ over the ring $\kk[d]$.  Note that the image of $\Psi$ is contained in the full monoidal subcategory $\Peis(G)$ of $\Heis(G)$ generated by the object $\uparrow \otimes \downarrow$.  It follows from the defining relations of $\Heis(G)$ that
    \[
        \begin{tikzpicture}[anchorbase]
            \draw[->] (0,0.3) arc(450:90:0.3);
            \draw[->] (0.6,-0.5) to (0.6,0.5);
            \draw[<-] (0.9,-0.5) to (0.9,0.5);
        \end{tikzpicture}
        \ =\
        \begin{tikzpicture}[anchorbase]
            \draw[->] (0,0.3) arc(450:90:0.3);
            \draw[<-] (-0.6,-0.5) to (-0.6,0.5);
            \draw[->] (-0.9,-0.5) to (-0.9,0.5);
        \end{tikzpicture}
        \ .
    \]
    In other words, the clockwise bubble is strictly central in $\Peis(G)$.    Let $\Peis(G,d)$ be the quotient of $\Peis(G)$, defined over $\kk[d]$, by the additional relation
    \[
        \begin{tikzpicture}[anchorbase]
            \draw[->] (0,0.3) arc(450:90:0.3);
        \end{tikzpicture}
        \ = d 1_\one.
    \]
    Then, as in \cref{gummies}, $\Peis(G)$, defined over $\kk$, is isomorphic as a $\kk$-linear category to $\Peis(G,d)$.  In other words, we can view the clockwise bubble of $\Peis(G)$ and the morphism $\lolly$ of $\Par(G)$, both of which are strictly central, as elements of the ground ring.

    Now, it is clear that, in the setting of \cref{towel},
    \begin{equation} \label{hula}
        \begin{tikzpicture}[centerzero]
            \pd{-0.4,-0.2};
            \pd{0.4,-0.2};
            \pd{0.8,-0.2};
            \pd{-0.4,0.2} node[anchor=south] {\dotlabel{g_{l'}}};
            \pd{0.4,0.2} node[anchor=south] {\dotlabel{g_{2'}}};
            \pd{0.8,0.2} node[anchor=south] {\dotlabel{g_{1'}}};
            \draw (-0.4,-0.2) -- (-0.4,0.2);
            \draw (0.4,-0.2) -- (0.4,0.2);
            \draw (0.8,-0.2) -- (0.8,0.2);
            \node at (0.04,0) {$\cdots$};
        \end{tikzpicture}
        \circ [T(P_1)] \circ [P_2] \circ [T(P_3)] \circ
        \begin{tikzpicture}[centerzero]
            \pd{-0.4,-0.2} node[anchor=north] {\dotlabel{g_k}};
            \pd{0.4,-0.2} node[anchor=north] {\dotlabel{g_2}};
            \pd{0.8,-0.2} node[anchor=north] {\dotlabel{g_1}};
            \pd{-0.4,0.2};
            \pd{0.4,0.2};
            \pd{0.8,0.2};
            \draw (-0.4,-0.2) -- (-0.4,0.2);
            \draw (0.4,-0.2) -- (0.4,0.2);
            \draw (0.8,-0.2) -- (0.8,0.2);
            \node at (0.04,0) {$\cdots$};
        \end{tikzpicture}
    \end{equation}
    is uniquely determined by $[P,\bg]$.  Indeed, $P$ is the partition diagram obtained from $T(P_1) \circ \Psi(P_2) \circ T(P_3)$ by replacing each pair $\uparrow \downarrow$ by a vertex and each strand by an edge.  Furthermore, the diagrams of the form \cref{hula} are linearly independent by \cite[Thm.~7.2]{BSW20}.  The result then follows by a standard triangularity argument.
\end{proof}

\section{Compatibility of categorical actions\label{sec:compat}}

We continue to assume that $G$ is a finite group.  The group Heisenberg category acts naturally on the direct sum of the categories $A_n\md$, $n \in \N$.  In this section, we recall this action and show that it is compatible with the embedding of the group partition category into the group Heisenberg category and the action of the group partition category described in \cref{hide}.

For $0 \le m,k \le n$, let ${}_k(n)_m$ denote $A_n$, considered as an $(A_k,A_m)$-bimodule.  We will omit the subscript $k$ or $m$ when $k=n$ or $m=n$, respectively.  We denote tensor product of such bimodules by juxtaposition.  For instance $(n)_{n-1}(n)$ denotes $A_n \otimes_{n-1} A_n$, considered as an $(A_n,A_n)$-bimodule, where we write $\otimes_m$ for the tensor product over $A_m$. As explained in \cite[\S7]{RS17}, we have a strong $\kk$-linear monoidal functor
\[
    \Theta \colon \Heis(G) \to \prod_{m \in \N} \left( \bigoplus_{n \in \N} (A_n,A_m)\bmd \right)
\]
given by
\begin{align*}
    \Theta
    \left(
        \begin{tikzpicture}[anchorbase]
            \draw[->] (-0.2,-0.2) -- (0.2,0.2);
            \draw[->] (0.2,-0.2) -- (-0.2,0.2);
        \end{tikzpicture}
    \right)
    &=
    \left(
        (n)_{n-2} \to (n)_{n-2},\ x \mapsto x s_{n-1}
    \right)_{n \ge 2},
    \\
    \Theta
    \left(
        \begin{tikzpicture}[anchorbase]
            \draw[->] (-0.2,0.2) -- (-0.2,0) arc (180:360:0.2) -- (0.2,0.2);
        \end{tikzpicture}
    \right)
    &=
    \left(
        (n-1) \to {}_{n-1}(n)_{n-1},\ x \mapsto x
    \right)_{n \ge 1},
    \\
    \Theta
    \left(
        \begin{tikzpicture}[anchorbase]
            \draw[->] (-0.2,-0.2) -- (-0.2,0) arc (180:0:0.2) -- (0.2,-0.2);
        \end{tikzpicture}
    \right)
    &=
    \left(
        (n)_{n-1}(n) \to (n),\ x \otimes y \mapsto xy
    \right)_{n \ge 1},
    \\
    \Theta
    \left(
        \begin{tikzpicture}[anchorbase]
            \draw[<-] (-0.2,0.2) -- (-0.2,0) arc (180:360:0.2) -- (0.2,0.2);
        \end{tikzpicture}
    \right)
    &= \textstyle
    \left(
        (n) \to (n)_{n-1}(n),\ x \mapsto x \sum_{1 \le i \le n,\ h \in G} h^{(i)} \pi_i \otimes \pi_{i}^{-1} \left(h^{-1}\right)^{(i)}
    \right)_{n \ge 1},
    \\
    \Theta
    \left(
        \begin{tikzpicture}[anchorbase]
            \draw[<-] (-0.2,-0.2) -- (-0.2,0) arc (180:0:0.2) -- (0.2,-0.2);
        \end{tikzpicture}
    \right)
    &=
    \left(
        {}_{n-1}(n)_{n-1} \to (n-1),\ \bg \pi \mapsto
        \begin{cases}
            (g_{n-1},g_{n-2},\dotsc,g_1)\pi & \text{if } \pi \in \fS_{n-1},\, g_n=1_G \\
            0 & \text{otherwise}
        \end{cases}
    \right)_{n \ge 1},
    \\
    \Theta
    \left(
        \begin{tikzpicture}[anchorbase]
            \draw[->] (0,-0.2) -- (0,0.2);
            \token{west}{0,0}{g};
        \end{tikzpicture}
    \right)
    &=
    \left(
        (n)_{n-1} \to (n)_{n-1},\ x \mapsto x \left( g^{-1} \right)^{(n)}
    \right)_{n \ge 1},
    \\
    \Theta
    \left(
        \begin{tikzpicture}[anchorbase]
            \draw[<-] (-0.2,-0.2) -- (0.2,0.2);
            \draw[->] (0.2,-0.2) -- (-0.2,0.2);
        \end{tikzpicture}
    \right)
    &=
    \left(
        {}_{n-1}(n)_{n-1} \to (n-1)_{n-2}(n-1),
    \right.
    \\
    &\qquad
    \left.
        \bg \pi \mapsto
        \begin{cases}
             (g_{n-1},\dotsc,g_1) \sigma_1 \otimes g_n^{(n-1)} \sigma_2 &\text{if } \pi = \sigma_1 s_{n-1} \sigma_2 \text{ for } \sigma_1,\sigma_2 \in \fS_{n-1}, \\
            0 & \text{if } \pi \in \fS_{n-1}
        \end{cases}
    \right)_{n \ge 2},
    \\
    \Theta
    \left(
        \begin{tikzpicture}[anchorbase]
            \draw[->] (-0.2,-0.2) -- (0.2,0.2);
            \draw[<-] (0.2,-0.2) -- (-0.2,0.2);
        \end{tikzpicture}
    \right)
    &=
    \left(
        (n-1)_{n-2}(n-1) \to {}_{n-1}(n)_{n-1},\ x \otimes y \mapsto x s_{n-1} y
    \right)_{n \ge 2},
    \\
    \Theta
    \left(
        \begin{tikzpicture}[anchorbase]
            \draw[<-] (-0.2,-0.2) -- (0.2,0.2);
            \draw[<-] (0.2,-0.2) -- (-0.2,0.2);
        \end{tikzpicture}
    \right)
    &=
    \left(
        {}_{n-2}(n) \to {}_{n-2}(n),\ x \mapsto s_{n-1} x
    \right)_{n \ge 2},\\
    \Theta
    \left(
        \begin{tikzpicture}[anchorbase]
            \draw[<-] (0,-0.2) -- (0,0.2);
            \token{east}{0,0}{g};
        \end{tikzpicture}
    \right)
    &=
    \left(
        {}_{n-1}(n)\to {}_{n-1}(n),\  x \mapsto \left( g^{-1} \right)^{(n)} x
    \right)_{n \ge 1}.
\end{align*}

As noted in the proof of \cref{hitchcock}, the image of $\Psi$ lies in the full monoidal subcategory $\Peis(G)$ of $\Heis(G)$ generated by $\uparrow \otimes \downarrow$.  For $n \in \N$, consider the composition
\[
    \Omega_n \colon \Peis(G)
    \xrightarrow{\Theta} \bigoplus_{m \in \N} (A_m,A_m)\bmd
    \xrightarrow{- \otimes_{A_n} \mathbf{1}_n} \bigoplus_{m \in \N} A_m\md,
\]
where we declare $M \otimes_{A_n} \mathbf{1}_n = 0$ for $M \in (A_m,A_m)\bmd$ with $m \ne n$.  The functor $\Omega_n$ is $\kk$-linear, but no longer monoidal.

\begin{theo} \label{wactcom}
    Consider the functors:
    \begin{equation}
        \begin{tikzcd}[column sep=2cm]
            \Par(G) \arrow[r,"\Psi"]
            \arrow[rd, swap,"\Phi_n"] &
            \Peis(G)
            \arrow[d,"\Omega_n"]
            \\
            & A_n\md
        \end{tikzcd}
      \ .
    \end{equation}
    The isomorphisms $\beta_k$, $k \in \N$, defined in \cref{definbeta}, give a natural isomorphism of functors $\Omega_n \circ \Psi \cong \Phi_n$.
\end{theo}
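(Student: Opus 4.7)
The plan is to verify that the family of isomorphisms $\beta_k$ of \cref{definbeta} assembles into a natural isomorphism $\Phi_n \Rightarrow \Omega_n \circ \Psi$. On objects this is essentially the content of \cref{definbeta}: unwinding $\Theta$ one has $\Theta(\uparrow \otimes \downarrow) = (n)_{n-1} \otimes_{A_{n-1}} {}_{n-1}(n) = A_n \otimes_{A_{n-1}} A_n$ as an $(A_n,A_n)$-bimodule, and iterating (with the cancellation $A_n \otimes_{A_n} A_n = A_n$) gives
\[
  \Omega_n\bigl(\Psi(\go^{\otimes k})\bigr) = \Theta\bigl((\uparrow \otimes \downarrow)^{\otimes k}\bigr) \otimes_{A_n} \mathbf{1}_n = B^k \otimes \mathbf{1}_n,
\]
which $\beta_k$ identifies with $V^{\otimes k} = \Phi_n(\go^{\otimes k})$.

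For naturality, since $\Par(G)$ is generated as a strict $\kk$-linear monoidal category by the morphisms listed in \cref{GPC}, every morphism is a $\kk$-linear combination of composites of morphisms of the form $1_{\go^{\otimes a}} \otimes f \otimes 1_{\go^{\otimes b}}$ with $f$ a generator. Naturality is preserved under composition, so it suffices to check the naturality square for each such elementary morphism. For the identity-padding, I would use that $\Theta \circ \Psi$ is monoidal into $(A_n,A_n)$-bimodules, that $(-) \otimes_{A_n} \mathbf{1}_n$ is compatible with tensoring over $A_n$, and that $\beta_k$ is built inductively from the basic iso $A_n \otimes_{A_{n-1}} W \cong V \otimes W$ (the lemma preceding \cref{definbeta}) applied one strand at a time. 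Together these reduce the check to the unpadded case ($a = b = 0$) for each type of generator.

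For the pins and tokens the check is a short direct computation. For the token, for instance,
\[
  \beta_1\bigl(\Phi_n(\tokstrand)(h e_i)\bigr) = (h g^{-1})^{(i)} \pi_i \otimes 1
  \quad\text{and}\quad
  \Omega_n\bigl(\Psi(\tokstrand)\bigr)\bigl(h^{(i)} \pi_i \otimes 1\bigr) = h^{(i)} \pi_i (g^{-1})^{(n)} \otimes 1,
\]
which agree via $\pi_i (g^{-1})^{(n)} = (g^{-1})^{(\pi_i(n))} \pi_i = (g^{-1})^{(i)} \pi_i$. The merge and split translate directly under $\Theta$ into the formulas for the unit and counit of the adjunction $\uparrow \dashv \downarrow$, combined with a short manipulation of the coset representatives of \cref{Carslaw}.

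The main obstacle is the crossing. Its image $\Psi(\crossing)$ decomposes as the naive crossing plus a sum $\sum_{g \in G}$ of additional cup-and-cap teleporter terms carrying matching $g$ and $g^{-1}$ tokens, and applying $\Omega_n$ term by term produces several summands as maps on $B^2 \otimes \mathbf{1}_n$. The expected target is the map induced by the swap $v \otimes w \mapsto w \otimes v$ under $\beta_2$. Verifying that the teleporter corrections cancel the off-diagonal contribution of the naive crossing and leave exactly this swap reduces to an algebraic identity in $A_n$ about how the left coset representatives $g^{(i)} \pi_i$ interact with the transposition $s_{n-1}$ before passing to the quotient by $A_{n-1}$. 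Once this identity is in hand, all generator-level naturality squares commute, and the proof is complete.
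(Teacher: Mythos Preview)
Your overall strategy matches the paper's: reduce naturality to the generating morphisms tensored with identities, then check each generator against the explicit formula for $\beta_k$. The sketches for the token and pins are fine and line up with what the paper does.

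There is, however, a real gap at the crossing. You correctly flag it as the crux but then only assert that ``once this identity is in hand'' the proof is done, without producing the identity. The paper carries this out in full, and it is not a one-line check. One writes $\Psi(\crossing) = f + f'$ with $f$ the four-strand crossing and $f'$ the teleporter sum, and then further factors $f = f_3 \circ f_2 \circ f_1$ through an $\uparrow\uparrow\downarrow\downarrow$ intermediary so that each $f_i$ is a single crossing. Acting on
\[
  \alpha = x\, g_1^{(i)} \pi_i \otimes \pi_i^{-1}(g_1^{-1})^{(i)} g_2^{(j)} \pi_j \otimes \pi_j^{-1}(g_2^{-1})^{(j)} y \in (n)_{n-1}(n)_{n-1}(n),
\]
one analyzes $\Theta(f_1)$ by writing $\pi_i^{-1}\pi_j$ as a reduced word and separating the cases $i=j$, $i<j$, $i>j$. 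The upshot is that $\Theta(f)(\alpha)$ vanishes when $i=j$ and gives the swapped element when $i\ne j$, while $\Theta(f')(\alpha)$ does the opposite: it gives the swapped element when $i=j$ and vanishes when $i\ne j$. So the two pieces do not ``cancel an off-diagonal contribution'' as you phrase it; rather, they cover complementary cases, and their \emph{sum} is the swap in all cases. This case split and the coset-representative manipulations behind it are the actual content of the proof, and your proposal has not supplied them.

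A smaller point: your reduction to the unpadded case $a=b=0$ via monoidality is plausible but needs care, since $\beta_k$ is not literally $\beta_1^{\otimes k}$ and $\Omega_n$ is not monoidal. The paper sidesteps this by checking the padded generator directly, absorbing the identity strands into arbitrary elements $x,y \in A_n$ on the outside of $\alpha$ (as above), which amounts to the same reduction but makes it transparent.
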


\begin{proof}
    Since the $\beta_k$ are isomorphisms of $A_n$-modules it suffices to show that they determine a natural transformation between the given functors.  Therefore, following the argument used in the proof of \cite[Theorem~5.1]{NS19} we need to check elements of the form
    \[
        1_{\go^{\otimes k}} \otimes x \otimes 1_{\go^{\otimes j}},\quad
        k,j \in \N,\quad x \in \left\{ \tokstrand[h],\ \crossing,\ \spliter,\ \merge,\ \toppin,\ \bottompin \, : h \in G \right\}.
    \]

    \medskip

    \noindent \emph{Tokens}:  For $h \in G$,
    \[
        \beta_k^{-1} \circ \left( \Omega_n \circ \Psi \left( 1_{\go^{\otimes(k-j-1)}} \otimes \tokstrand[h] \otimes 1_{\go^{\otimes (j-1)}} \right) \right) \circ \beta_k
        \colon V^{\otimes k} \to V^{\otimes k}
    \]
    is the $A_n$-module map given by
    \begin{align*}
        g_k e_{i_k} \otimes \dotsb &\otimes g_1 e_{i_1}
        \mapsto g_k^{(i_k)} \pi_{i_k} \otimes \pi_{i_k}^{-1} \left( g_k^{-1} \right)^{(i_k)} g_{k-1}^{(i_{k-1})} \pi_{i_{k-1}} \otimes \dotsb \otimes \pi_{i_2}^{-1} \left( g_2^{-1} \right)^{(i_2)} g_1^{(i_1)} \pi_{i_1} \otimes 1
        \\
        &\mapsto g_k^{(i_k)} \pi_{i_k} \otimes \pi_{i_k}^{-1} \left( g_k^{-1} \right)^{(i_k)} g_{k-1}^{(i_{k-1})} \pi_{i_{k-1}} \otimes \dotsb \otimes \pi_{i_{j+1}}^{-1} \left( g_{j+1}^{-1} \right)^{(i_{j+1})} g_1^{(i_j)} \pi_{i_{j}} \left( h^{-1} \right)^{(n)}
        \\
        &\qquad \qquad \otimes h^{(n)}\pi_{i_j}^{-1} \left( g_j^{-1} \right)^{(i_j)} g_1^{i_{j-1}} \pi_{i_{j-1}} \otimes\dotsb \otimes \pi_{i_2}^{-1} \left( g_2^{-1} \right)^{(i_2)} g_1^{(i_1)} \pi_{i_1} \otimes 1
        \\
        &\mapsto g_k^{(i_k)} \pi_{i_k} \otimes \pi_{i_k}^{-1} \left( g_k^{-1} \right)^{(i_k)} g_{k-1}^{(i_{k-1})} \pi_{i_{k-1}} \otimes \dotsb \otimes \pi_{i_{j+1}}^{-1} \left( g_{j+1}^{-1} \right)^{(i_{j+1})} \left( g_j h^{-1} \right)^{(i_j)} \pi_{i_{j}}
        \\
        &\qquad \qquad \otimes \pi_{i_j}^{-1} \left( h g_j^{-1} \right)^{(i_j)} g_1^{i_{j-1}} \pi_{i_{j-1}} \otimes\dotsb \otimes \pi_{i_2}^{-1} \left( g_2^{-1} \right)^{(i_2)} g_1^{(i_1)} \pi_{i_1} \otimes 1
        \\
        &\mapsto g_{k}e_{i_k} \otimes \dotsb \otimes g_{j+1}e_{i_{j+1}} \otimes g_j h^{-1} e_{i_j} \otimes g_{j-1} e_{i_{j-1}}\otimes \dotsb \otimes g_1 e_{i_1}.
    \end{align*}
    This is precisely the map $\Phi_n (1_{\go^{\otimes (k-j-1)}} \otimes \tokstrand[h] \otimes 1_{\go^{\otimes (j-1)}})$.

    \medskip

    \noindent \emph{Merge}:  For $g,h \in G$ and $1 \le i,j \le n$,
    \begin{equation} \label{cliff}
        \pi_i^{-1} \left( g^{-1} \right)^{(i)} h^{(j)} \pi_j
        = \left(g^{-1}\right)^{(n)} h^{(\pi_i^{-1}(j))} \pi_i^{-1} \pi_j.
    \end{equation}
    We have $\pi_i^{-1} \pi_j \in \fS_{n-1}$ if and only if $i=j$, in which case \cref{cliff} is equal to $\left( g^{-1} h \right)^{(n)}$.  Thus, the composition
    \[
        \beta_{k-1}^{-1} \circ \left( \Omega_n \circ \Psi \left( 1_{\go^{\otimes (k-j-1)}} \otimes \merge \otimes 1_{\go^{\otimes (j-1)}} \right) \right) \circ \beta_k
        \colon V^{\otimes k} \to V^{\otimes (k-1)}
    \]
    is the $A_n$-module map given by
    \begin{align*}
        g_k e_{i_k} \otimes \dotsb \otimes g_1 e_{i_1}
        &\mapsto g_k^{(i_k)} \pi_{i_k} \otimes \pi_{i_k}^{-1} \left( g_k^{-1} \right)^{(i_k)} g_{k-1}^{(i_{k-1})} \pi_{i_{k-1}}\otimes \dotsb \otimes \pi_{i_{j+1}}^{-1} \left( g_{j+1}^{-1} \right)^{(i_{j+1})} g_{j}^{(i_{j})} \pi_{i_{j}} \otimes \\
        &\qquad \qquad \dotsb \otimes \pi_{i_2}^{-1} \left( g_2^{-1} \right)^{(i_2)} g_1^{(i_1)} \pi_{i_1} \otimes 1
        \\
        &\mapsto \delta_{i_j,i_{j+1}} \delta_{g_j,g_{j+1}} g_ke_{i_k} \otimes \dotsb \otimes g_{j+2}e_{i_{j+2}} \otimes g_{j} e_{i_j} \otimes \dotsb \otimes g_1 e_{i_1}.
    \end{align*}
    This is precisely the map $\Phi_n \left( 1_{\go^{\otimes (k-j-1)}} \otimes \merge \otimes 1_{\go^{\otimes (j-1)}} \right)$.

    \medskip

    \noindent \emph{Split}:  The composition
    \[
        \beta_{k+1}^{-1} \circ \left( \Omega_n \circ \Psi \left( 1_{\go^{\otimes (k-j)}} \otimes \spliter \otimes 1_{\go^{\otimes (j-1)}} \right) \right) \circ \beta_k
        \colon V^{\otimes k} \to V^{\otimes (k+1)}
    \]
    is the $A_n$-module map given by
    \begin{align*}
        g_k e_{i_k} \otimes \dotsb \otimes g_1 e_{i_1}
        &\mapsto g_k^{(i_k)} \pi_{i_k} \otimes \pi_{i_k}^{-1} \left( g_k^{-1} \right)^{(i_k)} g_{k-1}^{(i_{k-1})} \pi_{i_{k-1}}\otimes\dotsb\otimes \pi_{i_{j+1}}^{-1} \left( g_{j+1}^{-1} \right)^{(i_{j+1})} g_{j}^{(i_{j})} \pi_{i_{j}} \otimes 1 \\
        &\qquad \qquad \otimes  \pi_{i_{j}}^{-1} \left( g_{j}^{-1} \right)^{(i_{j})} g_{j-1}^{(i_{j-1})} \otimes \dotsb \otimes \pi_{i_2}^{-1} \left( g_2^{-1} \right)^{(i_2)} g_1^{(i_1)} \pi_{i_1} \otimes 1
        \\
        &\mapsto g_{k}e_{i_k} \otimes \dotsb \otimes g_{j+1} e_{i_{j+1}} \otimes g_{j}e_{i_j} \otimes g_{j}e_{i_j} \otimes g_{j-1}e_{i_{j-1}} \otimes \dotsb \otimes g_1 e_{i_1}.
    \end{align*}
    This is precisely the map $\Phi_n \left( 1_{\go^{\otimes (k-j)}} \otimes \spliter \otimes 1_{\go^{\otimes (j-1)}} \right)$.

    \medskip

    \noindent \emph{Unit pin}:  The composition
    \[
        \beta_{k+1}^{-1} \circ \left( \Omega_n \circ \Psi \left( 1_{\go^{\otimes (k-j)}} \otimes \bottompin \otimes 1_{\go^{\otimes j}} \right) \right) \circ \beta_k
        \colon V^{\otimes k} \to V^{\otimes (k+1)}
    \]
    is the map
    \[
        g_k e_{i_k} \otimes \dotsb \otimes g_1 e_{i_1}
        \mapsto \sum_{h \in G} \sum_{i=1}^n g_k e_{i_k} \otimes \dotsb \otimes g_{j+1} e_{i_{j+1}} \otimes h e_i \otimes g_j e_{i_j} \otimes \dotsb \otimes g_1 e_{i_1},
    \]
    which is equal to the map $\Phi_n (1_{\go^{\otimes (k-j)}} \otimes \bottompin \otimes 1_{\go^{\otimes j}})$.

    \medskip

    \noindent \emph{Counit pin}: The composition
    \[
        \beta_{k-1}^{-1} \circ \left( \Omega_n \circ \Psi \left( 1_{\go^{\otimes (k-j)}} \otimes \toppin \otimes 1_{\go^{\otimes (j-1)}} \right) \right) \circ \beta_k
        \colon V^{\otimes k} \to V^{\otimes (k-1)}
    \]
    is the map
    \[
        g_k e_{i_k} \otimes \dotsb \otimes g_1 e_{i_1}
        \mapsto g_k e_{i_k} \otimes \dotsb \otimes g_{j+1} e_{i_{j+1}} \otimes g_{j-1} e_{i_{j-1}} \otimes \dotsb \otimes g_1 e_{i_1},
    \]
    which is equal to the map $\Phi_n(1_{\go^{\otimes (k-j)}} \otimes \toppin \otimes 1_{\go^{\otimes (j-1)}})$.

    \medskip

    \noindent \emph{Crossing}:  Define the elements $f,f' \in \End_{\Heis}(\uparrow \downarrow \uparrow \downarrow)$ by
    \begin{equation} \label{breakdown}
        f =
        \begin{tikzpicture}[anchorbase]
            \draw[->] (0,0) \braidto (1,1);
            \draw[<-] (0.5,0) \braidto (1.5,1);
            \draw[->] (1,0) \braidto (0,1);
            \draw[<-] (1.5,0) \braidto (0.5,1);
        \end{tikzpicture}
        \ ,\qquad
        f' =
        \sum_{g \in G}
        \begin{tikzpicture}[anchorbase]
            \draw[->] (-0.5,-0.6) -- (-0.5,0.6);
            \draw[->] (-0.3,0.6) -- (-0.3,0.5) arc(180:360:0.3) -- (0.3,0.6);
            \draw[<-] (-0.3,-0.6) -- (-0.3,-0.5) arc(180:0:0.3) -- (0.3,-0.6);
            \draw[<-] (0.5,-0.6) -- (0.5,0.6);
            \token{south}{0,0.2}{g};
            \token{north}{0,-0.2}{g};
            \token{east}{-0.5,0}{g^{-1}};
            \token{west}{0.5,0}{g^{-1}};
        \end{tikzpicture}
        \ .
    \end{equation}
    Note that
    \[
        f = f_3 \circ f_2 \circ f_1,
    \]
    where
    \[
        f_1 =
        \begin{tikzpicture}[anchorbase]
            \draw[->] (0,0) to (0,0.6);
            \draw[<-] (0.5,0) \braidto (1,0.6);
            \draw[->] (1,0) \braidto (0.5,0.6);
            \draw[<-] (1.5,0) to (1.5,0.6);
        \end{tikzpicture}
        \ ,\quad
        f_2 =
        \begin{tikzpicture}[anchorbase]
            \draw[->] (0,0) \braidto (0.5,0.6);
            \draw[->] (0.5,0) \braidto (0,0.6);
            \draw[<-] (1,0) \braidto (1.5,0.6);
            \draw[<-] (1.5,0) \braidto (1,0.6);
        \end{tikzpicture}
        \ ,\quad
        f_3 =
        \begin{tikzpicture}[anchorbase]
            \draw[->] (0,0) to (0,0.6);
            \draw[->] (0.5,0) \braidto (1,0.6);
            \draw[<-] (1,0) \braidto (0.5,0.6);
            \draw[<-] (1.5,0) to (1.5,0.6);
        \end{tikzpicture}
        \ .
    \]
    Suppose $i,j \in \{1,\dotsc,n\}$ and $x,y \in A_n$.  We first compute the action of $\Theta(f)$ and $\Theta(f')$ on
    \begin{align*}
        \alpha &= x {g_1}^{(i)}\pi_i \otimes \pi_i^{-1}\left(g_1^{-1}\right)^{(i)} {g_2}^{(j)}\pi_j \otimes \pi_j^{-1}\left(g_{2}^{-1}\right)^{(j)} y
        \\
        &= x {g_1}^{(i)}\pi_i \otimes \left(g_1^{-1}\right)^{(n)} {g_2}^{(\pi_i^{-1}(j))} \pi_i^{-1}\pi_j \otimes \pi_j^{-1}\left(g_{2}^{-1}\right)^{(j)} y
        \in (n)_{n-1}(n)_{n-1}(n),
    \end{align*}
    where $x,y \in A_n$.  If $i = j$, then $\pi_i^{-1} \pi_j = 1_{\fS_n}$, and so $\Omega_n(f_1)(\alpha) = 0$.  Now suppose $i < j$ so that
    \[
        \pi_i^{-1} \pi_j
        = s_{n-1} \dotsm s_i s_j \dotsm s_{n-1}
        = s_{j-1} \dotsm s_{n-2} s_{n-1} s_{n-2} \dotsm s_i.
    \]
    Thus
    \[
        \Theta(f_1)(\alpha)
        = x g_1^{(i)} \pi_i g_2^{(\pi_i^{-1}(j))} s_{j-1} \dotsm s_{n-2} \otimes \left( g_1^{-1} \right)^{(n-1)} s_{n-2} \dotsb s_i \pi_j^{-1}\left(g_{2}^{-1}\right)^{(j)} y
        \in (n)_{n-2}(n).
    \]
    Hence
    \[
        \Theta(f_2 \circ f_1)(\alpha)
        = x g_1^{(i)} \pi_i g_2^{(\pi_i^{-1}(j))} \pi_{j-1} \otimes \left( g_1^{-1} \right)^{(n)} \pi_i^{-1} \pi_j^{-1}\left(g_{2}^{-1}\right)^{(j)} y
        \in (n)_{n-2}(n),
    \]
    and so
    \begin{align*}
        \Theta(f)(\alpha)
        &= x g_1^{(i)} \pi_i g_2^{(\pi_i^{-1}(j))} \pi_{j-1} \otimes s_{n-1} \otimes \left( g_1^{-1} \right)^{(n)} \pi_i^{-1} \pi_j^{-1}\left(g_{2}^{-1}\right)^{(j)} y
        \\
        &= x g_2^{(j)} \pi_j g_1^{(i)} s_i \dotsm s_{n-2} \otimes s_{n-1} \otimes s_{n-2} \dotsm s_{j-1} \pi_i^{-1} \left( g_2^{-1} \right)^{(j)} \left( g_1^{-1} \right)^{(i)} y
        \\
        &= x g_2^{(j)} \pi_j \otimes g_1^{(i)} \pi_i s_{n-2} \dotsm s_{j-1} \left( g_2^{-1} \right)^{(j-1)} \otimes \pi_i^{-1}\left( g_1^{-1} \right)^{(i)} y
        \\
        &= x g_2^{(j)} \pi_j \otimes \pi_j^{-1} g_1^{(i)} \pi_i \left( g_2^{-1} \right)^{(j-1)} \otimes \pi_i^{-1} \left( g_1^{-1} \right)^{(i)} y
        \\
        &= x g_2^{(j)} \pi_j \otimes \pi_j^{-1} \left( g_2^{-1} \right)^{(j)} g_1^{(i)} \pi_i \otimes \pi_i^{-1} \left( g_1^{-1} \right)^{(i)} y.
    \end{align*}
    The case $i > j$ is similar,
    \details{
        Suppose $i > j$.  Then we have
        \[
            \pi_i^{-1} \pi_j
            = s_{n-1} \dotsm s_i s_j \dotsm s_{n-1}
            = s_j \dotsm s_{n-2} s_{n-1} s_{n-2} \dotsm s_{i-1}.
        \]
        So,
        \[
            \Theta(f_1)(\alpha)
            = x g_1^{(i)} \pi_i g_2^{(j)} s_{j} \dotsm s_{n-2} \otimes \left( g_1^{-1} \right)^{(n-1)} s_{n-2} \dotsb s_{i-1} \pi_j^{-1}\left(g_{2}^{-1}\right)^{(j)} y
            \in (n)_{n-2}(n).
        \]
        Hence
        \[
            \Theta(f_2 \circ f_1)(\alpha)
            = x g_1^{(i)} \pi_i g_2^{(j)} \pi_{j} \otimes \left( g_1^{-1} \right)^{(n)} \pi_{i-1}^{-1} \pi_j^{-1}\left(g_{2}^{-1}\right)^{(j)} y
            \in (n)_{n-2}(n),
        \]
        and thus
        \begin{align*}
            \Theta(f)(\alpha)
            &= x g_1^{(i)} \pi_i g_2^{(j)} \pi_{j} \otimes s_{n-1} \otimes \left( g_1^{-1} \right)^{(n)} \pi_{i-1}^{-1} \pi_j^{-1}\left(g_{2}^{-1}\right)^{(j)} y
            \\
            &= x g_1^{(i)} g_2^{(j)} \pi_i \pi_{j} \otimes s_{n-1} \otimes \pi_{i-1}^{-1} \pi_j^{-1}\left( g_1^{-1} \right)^{(i)} \left(g_{2}^{-1}\right)^{(j)} y
            \\
            &= x g_2^{(j)} g_1^{(i)} \pi_j s_{i-1} \dotsm s_{n-2}\otimes s_{n-1} \otimes s_{n-2}\dotsm s_{j} \pi_i^{-1} \left(g_{2}^{-1}\right)^{(j)} \left( g_1^{-1} \right)^{(i)} y
            \\
            &= x g_2^{(j)} \pi_j \otimes g_1^{(i-1)} s_{i-1}\dotsm s_{n-2} \pi_j^{-1}  \left(g_{2}^{-1}\right)^{(j)} \otimes \pi_i^{-1}\left( g_1^{-1} \right)^{(i)} y
            \\
            &= x g_2^{(j)} \pi_j \otimes g_1^{(i-1)} \pi_{j}^{-1} \pi_i \left(g_{2}^{-1}\right)^{(j)} \otimes \pi_i^{-1}\left( g_1^{-1} \right)^{(i)} y
            \\
            &= x g_2^{(j)} \pi_j \otimes \pi_{j}^{-1} \left(g_{2}^{-1}\right)^{(j)} g_1^{(i)} \pi_i \otimes \pi_i^{-1}\left( g_1^{-1} \right)^{(i)} y.
        \end{align*}
    }
    giving
    \[
        \Theta(f) (\alpha)
        =
        \begin{cases}
            0 & \text{if } i=j, \\
            x g_2^{(j)} \pi_j \otimes \pi_j^{-1} \left( g_2^{-1} \right)^{(j)} g_1^{(i)} \pi_i \otimes \pi_i^{-1} \left( g_1^{-1} \right)^{(i)} y & \text{if } i \ne j.
        \end{cases}
    \]
    We also compute that
    \[
        \Theta(f')(\alpha)
        =
        \begin{cases}
            x g_2^{(j)} \pi_j \otimes \pi_j^{-1} \left( g_2^{-1} \right)^{(j)} g_1^{(i)} \pi_i \otimes \pi_i^{-1} \left( g_1^{-1} \right)^{(i)} y & \text{if } i = j, \\
            0 & \text{if } i \ne j.
        \end{cases}
    \]
    Thus, for all $i,j \in \{1,\dotsc,n\}$, we have
    \[
        \Theta(f+f')(\alpha)
        = x g_2^{(j)} \pi_j \otimes \pi_j^{-1} \left( g_2^{-1} \right)^{(j)} g_1^{(i)} \pi_i \otimes \pi_i^{-1} \left( g_1^{-1} \right)^{(i)} y.
    \]
    Therefore we have that
    \begin{multline*}
        \beta_k^{-1} \circ \left( \Omega_n \circ \Psi \left( 1_{\go^{\otimes (k-j-1)}} \otimes \crossing \otimes 1_{\go^{\otimes (j-1)}} \right) \right) \circ \beta_k
        \\
        = \beta_k^{-1} \circ \left( \Omega_n \left( 1_{\uparrow \downarrow}^{\otimes (k-j-1)} \otimes (f+f') \otimes 1_{\uparrow \downarrow}^{\otimes (j-1)} \right) \right) \circ \beta_k
    \end{multline*}
    is the map
    \[
        g_k e_{i_k} \otimes \dotsb \otimes g_1 e_{i_1}
        \mapsto g_k e_{i_k} \otimes \dotsb \otimes g_{j+2} e_{i_{j+2}} \otimes g_j e_{i_j} \otimes g_{j+1} e_{i_{j+1}} \otimes g_{j-1} e_{i_{j-1}} \otimes \dotsb g_1 e_{i_1},
    \]
    which is precisely the map $\Phi_n \left( 1_{\go^{\otimes (k-j-1)}} \otimes \crossing \otimes 1_{\go^{\otimes (j-1)}} \right)$.
\end{proof}

\section{Interpolating categories\label{sec:Knop}}

We assume throughout this section that $G$ is a finite group.  In \cite{Kno07}, Knop generalized the work \cite{Del07} of Deligne by embedding a regular category $\cA$ into a family of pseudo-abelian tensor categories $\cT(\cA,\delta)$, which are the additive Karoubi envelope of categories $\cT^0(\cA,\delta)$ depending on a degree function $\delta$.   Deligne's original construction corresponds to the case where $\cA$ is the category of finite boolean algebras.

As we now explain, the group partition category $\Par(G,d)$ is equivalent to $\cT^0(\cA,\delta)$, where $\cA$ is the category of finite boolean algebras with a locally free $G$-action and $\delta$ is a degree function depending on $d$.  In this way, $\Par(G,d)$ can be viewed as a concrete realization (including explicit bases of morphisms spaces) of the category $\cT^0(\cA,\delta)$, whose definition is rather abstract.  Moreover, \cref{twocats} can be viewed as giving an efficient presentation of Knop's category.  On the other hand, the equivalence of $\Par(G,d)$ and $\cT^0(\cA,\delta)$ allows us to deduce from Knop's work several important properties of $\Par(G,d)$.

For an arbitrary finite set $X$, let $\Power(X)$ denote the power set of $X$.  For $Y \subseteq X$, let $\neg Y = X - Y$ denote its complement.  The $5$-tuple $(\Power(X), \cap, \cup, \neg, \varnothing, X)$ is an example of a \emph{finite boolean algebra}.  In what follows, we simply denote this boolean algebra by $\Power(X)$.  In turns out that \emph{every} finite boolean algebra is isomorphic to one of this form.  In fact, the category $\FBA$ of finite boolean algebras is equivalent to the opposite of the category $\FinSet$ of finite sets.  To a map $f \colon X \to Y$ of finite sets, the corresponding homomorphism of boolean algebras is the map $\Power(Y) \to \Power(X)$, $Z \mapsto f^{-1}(Z)$.  We refer the reader to \cite[Ch.~15]{GH09} for details.

By definition, an action of a group $G$ on the boolean algebra $\Power(X)$ is a group homomorphism from $G$ to the automorphism group of $\Power(X)$ in $\FBA$.  It follows from the axioms of a boolean algebra that this action is uniquely determined by the action of $G$ on singletons or, equivalently, by a $G$-action on the set $X$.  In this way, the category $\FBA(G)$ of finite boolean algebras with $G$-actions (with morphisms being homomorphisms of boolean algebras that intertwine the $G$-actions) is equivalent to the opposite of the category of finite $G$-sets.

We say that a $G$-action on a boolean algebra is \emph{locally free} if every element of the boolean algebra is a union of elements on which $G$ acts freely.  In the case of the finite boolean algebra $\Power(X)$, this is equivalent to the condition that $G$ acts freely on the singletons.  (Note that, since $X$ is finite, this forces the group $G$ to be finite.)  Hence the category $\FBAlf$ of finite boolean algebras with locally free $G$-actions is equivalent to the opposite of the category of finite sets with free $G$-action:
\begin{equation} \label{bowl}
    \FBAlf \simeq \FinSetf^\op.
\end{equation}
The category $\FBAlf$ is regular, exact, and Malcev, using the definitions of these concepts given in \cite{Kno07}.

Knop's definition of the category $\cT^0(\cA,\delta)$ involves the diagram \cite[(3.2)]{Kno07}:
\begin{equation} \label{corn}
    \begin{tikzcd}
        & & r \times_y s \arrow[dl] \arrow[d,dashed] \arrow[dr] & & \\
        & r \arrow[dl] \arrow[dr] & s \circ r \arrow[dll,dashed] \arrow[drr,dashed] & s \arrow[dl] \arrow[dr] & \\
        x & & y & & z,
    \end{tikzcd}
\end{equation}
where $x,y,z \in \cA$, $r$ is a subobject of $x \times y$, $s$ is a subobject of $y \times z$, and $s \circ r$ is the image of the natural surjective map $r \times_y s \to x \times z$.  To relate Knop's construction to the $G$-partition category, we consider the diagram \cref{corn} in the case where $\cA = \FBAlf$.

First note that the product $\Power(X) \times \Power(Y)$ is isomorphic to $\Power(X \sqcup Y)$.  Our next goal is to interpret the subobjects $r,s$ in \cref{corn} as $G$-partition diagrams.

Every finite free $G$-set is isomorphic to one of the form $X \times G$, where $X$ is a finite set (indexing the $G$-orbits), with $G$ action given by
\[
    g \cdot (x,h) = (x,hg^{-1}),\quad x \in X,\ g,h \in G.
\]
Thus, by \cref{bowl}, every element of $\FBAlf$ is isomorphic to one of the form $\Power(X \times G)$.  Moreover, every element is, in fact, isomorphic to $\Power(\{1,2,\dotsc,r\} \times G)$ for some $r \in \N$.  (We adopt the convention that $\{1,2,\dotsc,r\} = \varnothing$ when $r=0$.)

Define the natural projection map $p_X \colon X \times G \to X$.  For a morphism $\varphi \colon \Power(\{1,2,\dotsc,r\} \times G) \to \Power(X \times G)$, define
\begin{align*}
    P^\varphi_i &:= p_X \circ \varphi(\{(i,1_G)\}) \subseteq X,\quad 1 \le i \le r, \\
    \lambda^\varphi &:= \bigcup_{i=1}^r \varphi(\{(i,1_G)\}) \in G^X,
\end{align*}
and set $\vec{P}^\varphi = (P^\varphi_1,\dotsc,P^\varphi_r)$.  Here we use the formal definition of an element of $G^X$, the set of functions $X \to G$, as a subset of $X \times G$.  Let
\[ \textstyle
    \Partition_r(X) := \{(P_1,\dotsc,P_r) \in \Power(X)^r : \bigcup_{i=1}^r P_i = X,\ P_i \ne \varnothing,\ P_i \cap P_j = \varnothing \text{ for all } 1 \le i,j \le r\}.
\]
In other words $\Partition_r(X)$ is the set of all $r$-tuples of nonempty disjoint sets whose union is $X$.  For $x,y \in \FBAlf$, let $\Mon(x,y)$ denote the set of monomorphisms $x \to y$ in $\FBAlf$.

\begin{lem}
    The map
    \begin{equation} \label{mango}
        \Mon ( \Power(\{1,2,\dotsc,r\} \times G), \Power(X \times G) )
        \to \Partition_r(X) \times G^X,\quad
        \varphi \mapsto (\vec{P}^\varphi,\lambda^\varphi),
    \end{equation}
    is a bijection.
\end{lem}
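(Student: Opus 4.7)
My plan is to unwind the definitions through the duality \eqref{bowl} and reduce the statement to an elementary bijection for surjective $G$-equivariant maps of free $G$-sets.

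First, I will observe that under the equivalence $\FBAlf \simeq \FinSetf^\op$, a monomorphism $\varphi \colon \Power(\{1,\dotsc,r\} \times G) \to \Power(X \times G)$ corresponds to a surjective $G$-equivariant map $f \colon X \times G \to \{1,\dotsc,r\} \times G$, with $\varphi(S) = f^{-1}(S)$ for all $S$. Surjectivity follows from monomorphy of $\varphi$ via the usual translation between mono in the algebra side and epi in the set side.

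Next, I will describe such surjective equivariant maps explicitly. Since $G$ acts freely on $X \times G$ and on $\{1,\dotsc,r\} \times G$ (trivially on the first factor, by right translation on the second), equivariance forces $f$ to send the orbit of $(x,h)$ bijectively onto the orbit of $f(x,h)$. Consequently $f$ descends to a surjection $\bar{f} \colon X \to \{1,\dotsc,r\}$ on orbit spaces, and for each $x \in X$ there is a unique $\lambda(x) \in G$ with $f(x,\lambda(x)) = (\bar{f}(x), 1_G)$; conversely equivariance recovers $f$ from $(\bar{f},\lambda)$ via the formula $f(x,h) = (\bar{f}(x), \lambda(x)^{-1}h)$. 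Setting $P_i := \bar{f}^{-1}(i)$ yields an element of $\Partition_r(X)$ (nonempty because $\bar{f}$ is surjective, disjoint and covering because $\bar{f}$ is a function on $X$), and $\lambda$ is an arbitrary element of $G^X$. Conversely, given $(\vec{P},\lambda)$ one defines $f$ by the formula just displayed and checks it is well defined, $G$-equivariant, and surjective.

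Finally, I will match the defining formulas. Under the correspondence above,
\[
    f^{-1}(\{(i,1_G)\}) = \{(x,\lambda(x)) : x \in P_i\},
\]
so $\varphi(\{(i,1_G)\}) = f^{-1}(\{(i,1_G)\})$ has image $P_i$ under $p_X$, giving $P^\varphi_i = P_i$, and the union over $i$ recovers the graph of $\lambda$, giving $\lambda^\varphi = \lambda$. Thus the map \eqref{mango} is inverse to the assignment $(\vec{P},\lambda) \mapsto \varphi_{(\vec P,\lambda)}$ built from the explicit formula for $f$, establishing the bijection.

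The only mildly delicate point is the bookkeeping for the free $G$-action convention (right translation with the inverse in the second coordinate, matching the $G$-action $g \cdot (x,h) = (x,hg^{-1})$); once this is fixed the rest is routine. There is no serious obstacle—the argument is really just the observation that a $G$-equivariant surjection between two free $G$-sets is the same as a surjection of orbit spaces together with a choice of representative in each fiber.
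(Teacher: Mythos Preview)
Your argument is correct. You pass through the equivalence \eqref{bowl} and classify surjective $G$-equivariant maps $X\times G \to \{1,\dotsc,r\}\times G$ directly, extracting from each such $f$ the orbit-space surjection $\bar f$ (equivalently $\vec P$) and the section datum $\lambda$; the paper instead simply writes down the inverse map $(\vec P,\lambda)\mapsto \varphi_{\vec P,\lambda}$ by the formula $\varphi'_{\vec P,\lambda}(i,g)=\{(x,\lambda(x)g):x\in P_i\}$ and declares the verification straightforward. The two are compatible---your formula $f(x,h)=(\bar f(x),\lambda(x)^{-1}h)$ gives exactly $\varphi(\{(i,g)\})=f^{-1}(\{(i,g)\})=\{(x,\lambda(x)g):x\in P_i\}$---so you end up with the same inverse. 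Your route is slightly longer but explains \emph{why} the bijection holds (free $G$-sets are trivialized by a choice of orbit representatives), whereas the paper's proof is shorter but purely computational.
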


\begin{proof}
    For $(\vec{P},\lambda) \in \Partition_r(X) \times G^X$, define $\varphi'_{\vec{P},\lambda} \colon \{1,2,\dotsc,r\} \times G \to \Power(X \times G)$ by
    \[
        \varphi'_{\vec{P},\lambda} (i,g) = \{(x,\lambda(x)g) : x \in P_i\} \subseteq X \times G.
    \]
    This induces a map $\varphi_{\vec{P},\lambda} \colon \Power(\{1,2,\dotsc,r\} \times G) \to \Power(X \times G)$.  It is straightforward to verify that the map $(\vec{P},\lambda) \mapsto \varphi_{\vec{P},\lambda}$ is inverse to \cref{mango}.
\end{proof}

The proof of the following lemma is straightforward.

\begin{lem} \label{vinyl}
    The automorphism group of $\Power(\{1,2,\dotsc,r\} \times G)$ in $\FBAlf$ is the wreath product $G_r = G^r \rtimes \fS_r$, where the action is determined by its action on elements of $\{1,2,\dotsc,r\} \times G$ as follows:
    \begin{equation} \label{bump}
        (\bg, \pi) \cdot (i,h) := ( \pi(i), g_{\pi(i)} h ),\quad
        \pi \in \fS_r,\ i = \{1,2,\dotsc,r\},\ h \in G,\ \bg \in G^r.
    \end{equation}
\end{lem}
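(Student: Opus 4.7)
The plan is to exploit the equivalence of categories $\FBAlf \simeq \FinSetf^\op$ recalled in \cref{bowl}. Under this equivalence, $\Aut_{\FBAlf}(\Power(\{1,\dotsc,r\} \times G))$ is anti-isomorphic to the group of $G$-equivariant self-bijections of the free $G$-set $\{1,\dotsc,r\} \times G$; since any group is isomorphic to its opposite via inversion, it suffices to analyze these self-bijections and then verify that the formula \cref{bump} does give a well-defined group action by automorphisms. I would therefore phrase the proof as: (i) identify the automorphism group abstractly with $G^r \rtimes \fS_r$ by decomposing equivariant self-bijections into orbit-level data; (ii) check that \cref{bump} defines such an automorphism and that the resulting map $G_r \to \Aut$ is a group homomorphism under the wreath-product multiplication defined in \cref{sec:wreath}.

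For step (i), observe that the $G$-orbits of $\{1,\dotsc,r\} \times G$ are the subsets $\{i\} \times G$, $i = 1,\dotsc,r$. Since $G$ acts freely and transitively on each orbit, any $G$-equivariant bijection between two orbits is uniquely determined by the image of $(i,1_G)$. Consequently, a $G$-equivariant self-bijection $\varphi$ must permute the orbits, inducing a permutation $\pi \in \fS_r$ with $\varphi(\{i\} \times G) = \{\pi(i)\} \times G$, and is then determined by the elements $g_{\pi(i)} \in G$ such that $\varphi(i,1_G) = (\pi(i), g_{\pi(i)})$. This sets up a bijection between automorphisms and pairs $(\bg,\pi) \in G^r \times \fS_r$.

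For step (ii), I would verify directly that the assignment $(\bg,\pi) \cdot (i,h) := (\pi(i), g_{\pi(i)} h)$ is indeed $G$-equivariant (using the action $k \cdot (i,h) = (i, h k^{-1})$ from just after \cref{bowl}) and that it implements the wreath product multiplication. Writing the $j$-th component of $\bg(\pi\cdot\bh)$ as $g_j h_{\pi^{-1}(j)}$ (per the right-to-left conventions in \cref{sec:wreath}), a direct calculation gives
\[
  (\bg,\pi) \cdot \bigl((\bh,\sigma) \cdot (i,h)\bigr) = (\bg,\pi) \cdot (\sigma(i), h_{\sigma(i)} h) = (\pi\sigma(i), g_{\pi\sigma(i)} h_{\sigma(i)} h),
\]
which equals $(\bg(\pi\cdot\bh), \pi\sigma) \cdot (i,h)$ since the $\pi\sigma(i)$-th component of $\bg(\pi\cdot\bh)$ is $g_{\pi\sigma(i)} h_{\pi^{-1}(\pi\sigma(i))} = g_{\pi\sigma(i)} h_{\sigma(i)}$. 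Injectivity is immediate from evaluation at $(i,1_G)$ for each $i$, and surjectivity is exactly the parametrization established in step (i).

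The main obstacle is purely bookkeeping: the conventions (right-to-left indexing of $G^r$, the right $G$-action by $h \mapsto h g^{-1}$ on each orbit, and the direction of composition reversed by passing to $\FBAlf = \FinSetf^\op$) all need to be tracked carefully so that the formula \cref{bump} emerges with the correct indices. Once the conventions are fixed, the verification is entirely routine.
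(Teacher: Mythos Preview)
Your proposal is correct and complete. The paper itself does not provide a proof of this lemma, declaring only that it is ``straightforward''; your argument via the equivalence \cref{bowl}, the orbit decomposition of the free $G$-set, and the direct verification of the wreath-product multiplication is exactly the kind of routine check the paper is implicitly deferring to the reader.
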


We also have an action of $G_r$ on $\Power_r(X) \times G^X$ given by
\begin{equation} \label{thump}
    (\bg, \pi) \cdot \big( (P_1,\dotsc,P_r), (h_x)_{x \in X} \big)
    := \big( (P_{\pi^{-1}(1)}, \dotsc, P_{\pi^{-1}(r)}), (g_{\pi(i_x)} h_x)_{x \in X} \big),
\end{equation}
where $i_x \in \{1,2,\dotsc,r\}$ is determined by $x \in P_{i_x}$.  The following lemma is also a straightforward verification.

\begin{lem}
    The bijection \cref{mango} intertwines the actions \cref{bump,thump}.
\end{lem}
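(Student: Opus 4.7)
The plan is to prove this by direct computation. Fix $\sigma = (\bg, \pi) \in G_r$ and $\varphi \in \Mon(\Power(\{1, \dots, r\} \times G), \Power(X \times G))$, and write $(\vec{P}, \lambda) = (\vec{P}^\varphi, \lambda^\varphi)$. I need to verify that the image of $\sigma \cdot \varphi$ under \cref{mango} is exactly $\sigma \cdot (\vec{P}, \lambda)$ as defined in \cref{thump}.

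The key step is to evaluate $(\sigma \cdot \varphi)(\{(i, 1_G)\})$ for each $i \in \{1, \dots, r\}$, since this singleton image alone determines both $P^{\sigma \cdot \varphi}_i$ (via projection to $X$) and the restriction of $\lambda^{\sigma \cdot \varphi}$ to this part (via the union formula defining $\lambda^\varphi$). Using \cref{vinyl}, I view $\sigma$ as a boolean algebra automorphism $\tau_\sigma$ of $\Power(\{1, \dots, r\} \times G)$ acting on the $\Mon$-set by precomposition. By \cref{bump}, $\tau_\sigma^{\pm 1}$ sends the singleton $\{(i, 1_G)\}$ to another singleton of the form $\{(\pi^{\mp 1}(i), g_\ast^{\mp 1})\}$; substituting this into the explicit inverse-bijection formula $\varphi(\{(j, g)\}) = \{(x, \lambda(x)g) : x \in P_j\}$ from the paragraph preceding \cref{mango} yields $(\sigma \cdot \varphi)(\{(i, 1_G)\})$ as an explicit subset of $X \times G$. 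Projecting to $X$ recovers $P^{\sigma \cdot \varphi}_i = P_{\pi^{-1}(i)}$, and reading off the second coordinates recovers the transformed labels; these then match \cref{thump} directly.

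The main obstacle is purely notational bookkeeping, with no genuine mathematical content: one must consistently pin down (i) which boolean algebra automorphism (pushforward or pullback) corresponds to a given set bijection under the anti-equivalence $\FBAlf \simeq \FinSetf^\op$, (ii) whether the precomposition action is via $\tau_\sigma$ or $\tau_\sigma^{-1}$ to yield a genuine left action of $G_r$ rather than a right action, and (iii) how the graph identification of $G^X$ with subsets of $X \times G$ interacts with left versus right multiplication by elements of $G$. Once these choices are fixed coherently with \cref{bump,thump}, the verification collapses to a one-line computation.
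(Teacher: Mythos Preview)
Your proposal is correct and takes essentially the same approach as the paper, which simply declares the result a ``straightforward verification'' and omits all details. Your write-up supplies exactly the direct computation the paper leaves to the reader, with appropriate care about the bookkeeping of left versus right actions and the anti-equivalence with $\FinSetf^\op$.
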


Let $\Partition(X)$ denote the set of partitions of $X$.  For $\vec{P} = (P_1,\dotsc,P_r) \in \Partition_r(X)$, let $P = \{P_1,\dotsc,P_r\} \in \Partition(X)$ denote the corresponding partition of $X$.  For $P = \{P_1,\dotsc,P_r\} \in \Partition(X)$, define an equivalence relation $\sim_P$ on $G^X$ as follows: $(g_x)_{x \in X} \sim_P (h_x)_{x \in X}$ if and only if there exist $t_1,\dotsc,t_r \in G$ such that $g_x = t_i h_x$ for all $x \in P_i$.

\begin{cor}
    The subobjects of $\Power(X \times G)$ in the category $\FBAlf$ are naturally enumerated by the set
    \begin{equation} \label{croissant}
        \bigsqcup_{P \in \Partition(X)} G^X/\sim_P.
    \end{equation}
\end{cor}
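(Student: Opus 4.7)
The plan is to realize subobjects of $\Power(X \times G)$ as orbits, under the automorphism groups of the sources, of monomorphisms into $\Power(X \times G)$, and then translate this into a combinatorial count via the bijection \cref{mango}. First, since every object of $\FBAlf$ is isomorphic to some $\Power(\{1,\dotsc,r\} \times G)$ with $r \in \N$, each subobject of $\Power(X \times G)$ has a representative $\varphi \in \Mon(\Power(\{1,\dotsc,r\} \times G), \Power(X \times G))$ for a unique $r$, and two such $\varphi$, $\varphi'$ represent the same subobject if and only if they differ by precomposition with an automorphism of the source. By \cref{vinyl}, the group of such automorphisms is $G_r$. Hence the set of subobjects of $\Power(X \times G)$ is in natural bijection with
\[
    \bigsqcup_{r \ge 0} \Mon ( \Power(\{1,\dotsc,r\} \times G), \Power(X \times G) ) / G_r.
\]

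Next, I would invoke \cref{mango} together with its $G_r$-equivariance to rewrite this disjoint union as $\bigsqcup_{r \ge 0} (\Partition_r(X) \times G^X)/G_r$, where $G_r$ acts via \cref{thump}. So the main task reduces to computing these orbits. I would handle the $\fS_r$-action and the $G^r$-action separately. The parts of any element of $\Partition_r(X)$ are nonempty and pairwise disjoint, hence distinct, so $\fS_r$ acts freely on $\Partition_r(X)$ by permuting the tuple of parts; thus $\fS_r$-orbits on $\Partition_r(X)$ correspond bijectively to unordered partitions $P \in \Partition(X)$ with exactly $r$ parts, and each ordered tuple $(P_1,\dotsc,P_r)$ has stabilizer equal to $G^r \times \{1_{\fS_r}\} \le G_r$.

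Choosing such an ordered representative of a given $P$, I would then observe that the residual $G^r$-action on the $G^X$-factor is precisely
\[
    \bg \cdot (h_x)_{x \in X} = (g_{i_x} h_x)_{x \in X},
\]
where $i_x$ is the index of the part of $P$ containing $x$. Comparing with the definition of $\sim_P$, the $G^r$-orbits on $G^X$ are exactly the $\sim_P$-equivalence classes. Combining the two steps gives the $G_r$-orbits on $\Partition_r(X) \times G^X$ as $\bigsqcup_{|P|=r} G^X/\sim_P$, and taking the disjoint union over $r$ yields the claimed enumeration $\bigsqcup_{P \in \Partition(X)} G^X/\sim_P$. I anticipate no serious obstacle: the only care required is to verify the claim that distinct parts force the $\fS_r$-stabilizer to be trivial, and to unwind the formula \cref{thump} correctly on a chosen representative so that the $G^r$-action matches the definition of $\sim_P$ on the nose.
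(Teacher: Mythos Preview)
Your proposal is correct and is precisely the argument the paper has in mind: the corollary is stated without proof, being an immediate consequence of the three preceding lemmas (the bijection \cref{mango}, the identification of the automorphism group as $G_r$ in \cref{vinyl}, and the equivariance of \cref{mango}). Your write-up faithfully unpacks how those lemmas combine---subobjects are $G_r$-orbits of monomorphisms, which via \cref{mango} become $G_r$-orbits on $\Partition_r(X)\times G^X$, and the orbit computation splits as you describe---so there is nothing to add.
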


Now consider the diagram \cref{corn} with $x = \Power(\{1,2,\dotsc,k\} \times G)$, $y = \Power(\{1,2,\dotsc,l\} \times G)$, and $z = \Power(\{1,2,\dotsc,m\} \times G)$.  When $X = \PS_k^l \cong \{1,2,\dotsc,k\} \sqcup \{1,2,\dotsc,l\}$, the set \cref{croissant} can be naturally identified with the equivalence classes of $G$-partitions of type $\binom{l}{k}$.  Thus we can view $r$ and $s$ as equivalence classes of $G$-partitions $[P,\bg]$ and $[Q,\bh]$, respectively.  We then leave it to the reader to verify that $r \times_y s$ exists if and only if the pair $([Q,\bg], [P,\bh])$ is compatible.  If this pair is compatible, then $r \times_y s$ is the equivalence class of $\stack((Q,\bh),(P,\bg))$ and $s \circ r = [Q \star P, \bh \star_{Q,P} \bg]$.  Thus we have the following result.

\begin{theo} \label{Knop}
    The $G$-partition category $\Par(G,d)$ is equivalent to the category $\cT^0(\cA,\delta)$ defined in \cite[Def.~3.2]{Kno07}, where $\cA = \FBAlf$ and $\delta$ is the degree function of \cite[(8.15)]{Kno07} with the $t$ there equal to $d$.
\end{theo}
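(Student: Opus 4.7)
The plan is to construct an explicit equivalence by exploiting the careful bookkeeping already established in the paragraphs preceding the theorem. Define a $\kk$-linear functor $F \colon \Par(G,d) \to \cT^0(\FBAlf,\delta)$ on objects by $F(k) = \Power(\{1,\dotsc,k\} \times G)$. On morphisms, Knop's $\Hom$-spaces in $\cT^0(\cA,\delta)$ are by definition the free $\kk$-module on the set of subobjects of $x \times y$ in $\cA$; the corollary preceding the theorem (together with the identification $\Power(X) \times \Power(Y) \cong \Power(X \sqcup Y)$) provides a canonical bijection between such subobjects for $x = \Power(\{1,\dotsc,k\} \times G)$, $y = \Power(\{1,\dotsc,l\} \times G)$ and equivalence classes of $G$-partitions of type $\binom{l}{k}$. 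Take $F$ to be this bijection, extended $\kk$-linearly.

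The main task is then to verify that $F$ intertwines the two notions of composition. By the discussion just above the theorem statement, given $G$-partitions $(P,\bg)$ and $(Q,\bh)$ of the appropriate types, the fiber product $r \times_y s$ in Knop's diagram \cref{corn} exists precisely when $([Q,\bh],[P,\bg])$ is compatible in the sense of \cref{cat}, and in that case the image $s \circ r$ equals $[Q \star P,\bh \star_{Q,P} \bg]$. So on the combinatorial side the two compositions agree on the nose; what remains is the scalar. Knop's composition carries a prefactor built from $\delta$ and from a count of components in the relevant pullback, and I would match this, via a direct comparison, to the factor $d^{\alpha(Q,P)}$ appearing in \cref{GPCbasis}. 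Choosing $\delta$ to be the degree function of \cite[(8.15)]{Kno07} with parameter $t = d$ is exactly what is needed: the ``middle-row'' components counted by $\alpha(Q,P)$ correspond to those parts of $r \times_y s$ which are killed by the natural surjection $r \times_y s \twoheadrightarrow s \circ r$, and Knop's normalization assigns each such collapsed component a factor $\delta$. Thus $F([Q,\bh] \circ [P,\bg])$ and $F([Q,\bh]) \circ F([P,\bg])$ agree as morphisms in $\cT^0(\cA,\delta)$.

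Having established that $F$ is a well-defined $\kk$-linear functor, it is clearly essentially surjective: every object of $\FBAlf$ is isomorphic to one of the form $\Power(\{1,\dotsc,k\} \times G)$, as noted in the text. Fullness and faithfulness on $\Hom$-spaces are built into the construction of $F$ via the bijection \cref{croissant}. Hence $F$ is an equivalence of $\kk$-linear categories.

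The main obstacle is the scalar-matching step: keeping track of exactly how Knop's degree function enters composition and reconciling his conventions (closed components of the pullback scheme $r \times_y s$ collapsed in $s \circ r$) with our own (``middle-row'' closed components of $\stack(Q,P)$ counted by $\alpha(Q,P)$). Once one unwinds \cite[(8.15)]{Kno07} in the locally free setting and observes that for a locally free $G$-set of size $n|G|$ the relevant degree is exactly $n|G|$ per collapsed orbit, everything lines up. The monoidal structure is not part of Knop's axiomatic framework for $\cT^0$, so the theorem is phrased as an equivalence of $\kk$-linear categories and no further compatibility with $\otimes$ needs to be verified here.
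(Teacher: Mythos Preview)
Your proposal is correct and follows essentially the same approach as the paper: the paper's proof is a single sentence, ``This follows from the above discussion and \cite[Example~2, p.~596]{Kno07},'' and what you have written is precisely an unpacking of that discussion into an explicit functor, together with an appeal to the same reference for the scalar-matching step. The only difference is that the paper offloads the degree-function verification entirely to Knop's worked example, whereas you sketch how the matching goes; otherwise the arguments are identical.
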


\begin{proof}
    This follows from the above discussion and \cite[Example~2, p.~596]{Kno07}.
\end{proof}

Let $\Kar(\Par(G,d))$ be the additive Karoubi envelope (also known as the pseudo-abelian completion) of $\Par(G,d)$.  Let $\cN(G,d)$ be the tensor radical (also known as the tensor ideal of negligible morphisms) of $\Kar(\Par(G,d))$.

\begin{cor} \label{fire}
    Suppose $\kk$ is a field of characteristic zero.
    \begin{enumerate}
        \item The category $\Kar(\Par(G,d))/\cN(G,d)$ is a semisimple (hence abelian) category.

        \item We have $\cN(G,d) = 0$ if and only if $d \notin \N |G|$.

        \item If $\cN(G,d) = 0$, then the simple objects of $\Kar(\Par(G,d))$ are naturally parameterized by the set of $N$-tuples of Young diagrams, where $N$ is the number of isomorphism classes of simple $G$-modules.

        \item If $d = n |G|$, then $\Kar(\Par(G,d))/\cN(G,d)$ is equivalent to the category of $\kk G_n$-modules.
    \end{enumerate}
\end{cor}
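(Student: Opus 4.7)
The plan is to transfer the four claims from Knop's results in \cite{Kno07} via the equivalence of \cref{Knop}, which identifies $\Par(G,d)$ with $\cT^0(\FBAlf,\delta)$, extends to an equivalence of Karoubi envelopes, and is compatible with tensor radicals. Thus (a)--(c) should reduce to the corresponding statements in \cite{Kno07}, while only (d) will require additional work involving the categorical action $\Phi_n$ of \cref{hide}.

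For (a), I will invoke the general fact that in characteristic zero the quotient of a pseudo-abelian $\kk$-linear rigid tensor category with finite-dimensional Hom spaces by its tensor radical is semisimple. The hypotheses hold here: morphism spaces of $\Par(G,d)$ have finite bases of $G$-partition diagrams (\cref{GPCbasis}), and $\go$ is self-dual by the pivotal structure of \cref{Sydney}.

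For (b) and (c), I will apply Knop's semisimplicity criterion and his classification of simples to the specific choice $\cA = \FBAlf$, $\delta$ as in \cref{Knop}. The criterion should reduce, after evaluating the relevant polynomials in $d$, to the condition $d \notin \N|G|$, giving the dichotomy in (b). For (c), Knop parameterizes simples of $\cT(\cA,\delta)$ when $\cN = 0$ by pairs consisting of an indecomposable object of $\cA$ together with an irreducible representation of its automorphism group. In our setting the indecomposables are $\Power(\{1,\dotsc,r\} \times G)$ with automorphism group $G_r$ (\cref{vinyl}), whose irreducibles are classically parameterized by $N$-tuples of Young diagrams of total size $r$; summing over $r$ yields the stated parameterization.

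For (d), when $d = n|G|$ the action functor $\Phi_n$ from \cref{hide} extends to the Karoubi envelope. Since $\kk G_n\md$ is semisimple by Maschke's theorem, negligible morphisms go to zero, so $\Phi_n$ descends to a functor $\bar\Phi_n \colon \Kar(\Par(G,d))/\cN(G,d) \to \kk G_n\md$, which is full by \cref{kangaroo}(a). For essential surjectivity I will match, via the classification in (c) and the classical parameterization of simple $\kk G_n$-modules by $N$-tuples of Young diagrams of total size $n$, the indecomposables of $\Kar(\Par(G,d))$ indexed by such tuples with the simple summands of $V^{\otimes k}$ for various $k$. I expect the main obstacle to be this last matching step: one must identify the idempotents in $P_k(G,d)$ cutting out the irreducible $\kk G_n$-modules with those coming from Knop's construction, which should follow from the double centralizer considerations underlying \cref{kangaroo} together with the construction of Young-type idempotents for wreath products.
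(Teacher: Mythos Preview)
Your overall strategy for (a)--(c) matches the paper: all three parts are deduced directly from \cite{Kno07} via the equivalence of \cref{Knop}. The paper's proofs are pure citations: (a) is \cite[Th.~6.1(i)]{Kno07}, (b) is \cite[Example~2, p.~596]{Kno07}, and (c) is parts (iii) and (iv) of \cite[Th.~6.1]{Kno07} combined with \cref{vinyl} and the standard classification of irreducibles for wreath products---exactly the outline you give. One caution on (a): the ``general fact'' you state (that quotienting any pseudo-abelian rigid $\kk$-linear tensor category with finite-dimensional Homs by its tensor radical yields a semisimple category) is not true at that level of generality; the paper does not attempt this and simply invokes Knop's theorem, which applies because $\FBAlf$ satisfies his hypotheses (regular, exact, Malcev). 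Stick with the citation.

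For (d) you depart from the paper. The paper again just cites Knop, namely \cite[Example~2, p.~606]{Kno07}, where the equivalence with $\kk G_n\md$ is already worked out in Knop's framework. Your route through $\Phi_n$ is a legitimate alternative and more self-contained, but the obstacles you flag are real. Fullness from \cref{kangaroo}(a) is fine, and the factorization through the quotient by negligibles is automatic since the target is semisimple. What is missing is (i) faithfulness of $\bar\Phi_n$, i.e.\ that the kernel of $\Phi_n$ on the Karoubi envelope is \emph{exactly} $\cN(G,d)$ and not larger, and (ii) essential surjectivity, i.e.\ that every simple $\kk G_n$-module occurs as a summand of some $V^{\otimes k}$. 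Both can be established, but neither follows formally from what you have cited; your proposed ``matching'' via idempotents would amount to redoing a fair piece of Knop's analysis. The paper's approach avoids this entirely.
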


\begin{proof}
    \begin{enumerate}[wide]
        \item This follows from \cite[Th.~6.1(i)]{Kno07}.

        \item This follows from \cite[Example~2, p.~596]{Kno07}.

        \item By parts (iii) and (iv) of \cite[Th.~6.1]{Kno07}, the simple objects of $\Kar(\Par(G,d))$ are in bijection with the simple modules of the automorphism groups of objects of $\FBAlf$ which, by \cref{vinyl}, are precisely the wreath products $G_n$, $n \in \N$.  The statement then follows from the classification of irreducible modules of wreath product groups.  (See, for example, \cite[Prop.~4.3]{RS17}.)

        \item This is explained in \cite[Example~2, p.~606]{Kno07}. \qedhere
    \end{enumerate}
\end{proof}

\begin{rem} \label{Mori}
    Deligne's construction has also been generalized by Mori \cite{Mor12}, who defined, for each $d \in \kk$, a 2-functor $\cS_d$ sending a tensor category $\cC$ to another tensor category $\cS_d(\cC)$, which should be thought of as a sort of interpolating wreath product functor.  When $\cC$ is the category $G\md$ of $G$-modules, $\cS_t(G\md)$ can also be thought of as a family of interpolating categories for modules of the wreath products $G_n$, $n \in \N$.  Mori's interpolating category contains Knop's as a full subcategory; see \cite[Rem.~4.14]{Mor12}.  Mori gives a presentation of his categories, the relations of which can be found in \cite[Prop.~4.26]{Mor12}.  The presentation of \cref{GPC} is considerably more efficient.  For example, $\Par(G)$ has just \emph{one} generating object, whereas Mori's category (before taking the additive Karoubi envelope) has a generator for each representation of $G$.  In addition, the presentation of \cite{Mor12} includes as generating morphisms \emph{all} morphisms in the category $G\md$, whereas the presentation of \cref{GPC} only includes a morphism for each element of the group (the tokens).
\end{rem}


\bibliographystyle{alphaurl}
\bibliography{GroupPartition}

\begin{thebibliography}{Com20}

\bibitem[Blo03]{Blo03}
M.~Bloss.
\newblock {$G$}-colored partition algebras as centralizer algebras of wreath
  products.
\newblock {\em J. Algebra}, 265(2):690--710, 2003.
\newblock \href {https://doi.org/10.1016/S0021-8693(03)00132-7}
  {\path{doi:10.1016/S0021-8693(03)00132-7}}.

\bibitem[BSW]{BSW20}
J.~Brundan, A.~Savage, and B.~Webster.
\newblock Foundations of {F}robenius {H}eisenberg categories.
\newblock {\em J. Algebra}.
\newblock To appear.
\newblock \href {http://arxiv.org/abs/2007.01642} {\path{arXiv:2007.01642}}.

\bibitem[Com20]{Com16}
J.~Comes.
\newblock Jellyfish partition categories.
\newblock {\em Algebr. Represent. Theory}, 23(2):327--347, 2020.
\newblock \href {http://arxiv.org/abs/1612.05182} {\path{arXiv:1612.05182}},
  \href {https://doi.org/10.1007/s10468-018-09851-7}
  {\path{doi:10.1007/s10468-018-09851-7}}.

\bibitem[Del07]{Del07}
P.~Deligne.
\newblock La cat\'{e}gorie des repr\'{e}sentations du groupe sym\'{e}trique
  {$S_t$}, lorsque {$t$} n'est pas un entier naturel.
\newblock In {\em Algebraic groups and homogeneous spaces}, volume~19 of {\em
  Tata Inst. Fund. Res. Stud. Math.}, pages 209--273. Tata Inst. Fund. Res.,
  Mumbai, 2007.

\bibitem[Eti14]{Eti14}
P.~Etingof.
\newblock Representation theory in complex rank, {I}.
\newblock {\em Transform. Groups}, 19(2):359--381, 2014.
\newblock \href {http://arxiv.org/abs/1401.6321} {\path{arXiv:1401.6321}},
  \href {https://doi.org/10.1007/s00031-014-9260-2}
  {\path{doi:10.1007/s00031-014-9260-2}}.

\bibitem[FS18]{FS18}
A.~Freslon and A.~Skalski.
\newblock Wreath products of finite groups by quantum groups.
\newblock {\em J. Noncommut. Geom.}, 12(1):29--68, 2018.
\newblock \href {http://arxiv.org/abs/1510.05238} {\path{arXiv:1510.05238}},
  \href {https://doi.org/10.4171/JNCG/270} {\path{doi:10.4171/JNCG/270}}.

\bibitem[GH09]{GH09}
S.~Givant and P.~Halmos.
\newblock {\em Introduction to {B}oolean algebras}.
\newblock Undergraduate Texts in Mathematics. Springer, New York, 2009.
\newblock \href {https://doi.org/10.1007/978-0-387-68436-9}
  {\path{doi:10.1007/978-0-387-68436-9}}.

\bibitem[Har16]{Har16}
N.~Harman.
\newblock Generators for the representation rings of certain wreath products.
\newblock {\em J. Algebra}, 445:125--135, 2016.
\newblock \href {http://arxiv.org/abs/1410.1786} {\path{arXiv:1410.1786}},
  \href {https://doi.org/10.1016/j.jalgebra.2015.09.003}
  {\path{doi:10.1016/j.jalgebra.2015.09.003}}.

\bibitem[Kho14]{Kho14}
M.~Khovanov.
\newblock Heisenberg algebra and a graphical calculus.
\newblock {\em Fund. Math.}, 225(1):169--210, 2014.
\newblock \href {http://arxiv.org/abs/1009.3295} {\path{arXiv:1009.3295}},
  \href {https://doi.org/10.4064/fm225-1-8} {\path{doi:10.4064/fm225-1-8}}.

\bibitem[Kno07]{Kno07}
F.~Knop.
\newblock Tensor envelopes of regular categories.
\newblock {\em Adv. Math.}, 214(2):571--617, 2007.
\newblock \href {http://arxiv.org/abs/math/0610552}
  {\path{arXiv:math/0610552}}, \href
  {https://doi.org/10.1016/j.aim.2007.03.001}
  {\path{doi:10.1016/j.aim.2007.03.001}}.

\bibitem[Koc04]{Koc04}
J.~Kock.
\newblock {\em Frobenius algebras and 2{D} topological quantum field theories},
  volume~59 of {\em London Mathematical Society Student Texts}.
\newblock Cambridge University Press, Cambridge, 2004.

\bibitem[Mor12]{Mor12}
M.~Mori.
\newblock On representation categories of wreath products in non-integral rank.
\newblock {\em Adv. Math.}, 231(1):1--42, 2012.
\newblock \href {http://arxiv.org/abs/1105.5091} {\path{arXiv:1105.5091}},
  \href {https://doi.org/10.1016/j.aim.2012.05.002}
  {\path{doi:10.1016/j.aim.2012.05.002}}.

\bibitem[NLS]{NS19}
S.~Nyobe~Likeng and A.~Savage.
\newblock Embedding {D}eligne's category {$\mathrm{\underline{Re}p}(S_t)$} in
  the {H}eisenberg category (appendix with {C. Ryba}).
\newblock {\em Quantum Topol.}
\newblock To appear.
\newblock \href {http://arxiv.org/abs/1905.05620} {\path{arXiv:1905.05620}}.

\bibitem[RS17]{RS17}
D.~Rosso and A.~Savage.
\newblock A general approach to {H}eisenberg categorification via wreath
  product algebras.
\newblock {\em Math. Z.}, 286(1-2):603--655, 2017.
\newblock \href {http://arxiv.org/abs/1507.06298} {\path{arXiv:1507.06298}},
  \href {https://doi.org/10.1007/s00209-016-1776-9}
  {\path{doi:10.1007/s00209-016-1776-9}}.

\bibitem[Ryb19]{Ryb19}
C.~Ryba.
\newblock Stable {G}rothendieck rings of wreath product categories.
\newblock {\em J. Algebraic Combin.}, 49(3):267--307, 2019.
\newblock \href {http://arxiv.org/abs/1704.02226} {\path{arXiv:1704.02226}},
  \href {https://doi.org/10.1007/s10801-018-0856-9}
  {\path{doi:10.1007/s10801-018-0856-9}}.

\bibitem[Ryb21]{Ryb18}
C.~Ryba.
\newblock The structure of the {G}rothendieck rings of wreath product {D}eligne
  categories and their generalisations.
\newblock {\em Int. Math. Res. Not. IMRN}, (16):12420--12462, 2021.
\newblock \href {http://arxiv.org/abs/1810.11171} {\path{arXiv:1810.11171}},
  \href {https://doi.org/10.1093/imrn/rnz144} {\path{doi:10.1093/imrn/rnz144}}.

\bibitem[Sav19]{Sav19}
A.~Savage.
\newblock Frobenius {H}eisenberg categorification.
\newblock {\em Algebr. Comb.}, 2(5):937--967, 2019.
\newblock \href {http://arxiv.org/abs/1802.01626} {\path{arXiv:1802.01626}},
  \href {https://doi.org/10.5802/alco.73} {\path{doi:10.5802/alco.73}}.

\bibitem[Sav20]{Sav20}
A.~Savage.
\newblock Affine wreath product algebras.
\newblock {\em Int. Math. Res. Not. IMRN}, (10):2977--3041, 2020.
\newblock \href {http://arxiv.org/abs/1709.02998} {\path{arXiv:1709.02998}},
  \href {https://doi.org/10.1093/imrn/rny092} {\path{doi:10.1093/imrn/rny092}}.

\end{thebibliography}

\end{document}